\documentclass[11pt]{article}
\usepackage{amsmath, amssymb, latexsym, times, scalerel}
\usepackage{amsthm}
\usepackage{rsfso}

\usepackage{amscd}
\usepackage{booktabs} 
\usepackage{wrapfig}
\usepackage{tikz}
\usepackage{pgfplots}
\usepackage{float}
\pgfplotsset{compat=newest}
\usepackage{authblk}
\usepackage{mathrsfs}
\usepackage{bbm}
\usepackage{graphicx}
\usepackage{stmaryrd}
\usepackage[all]{xy}
\usepackage{enumitem}
\usepackage{tikz-cd}

\usepackage{tocloft}
\let\OLDthebibliography\thebibliography
\renewcommand\thebibliography[1]{
  \OLDthebibliography{#1}
  \setlength{\parskip}{4pt}
  \setlength{\itemsep}{2pt plus 0.3ex}
}

\usepackage{hyperref}
\hypersetup{
    colorlinks=true,
    linkcolor=blue,
    filecolor=magenta,      
    urlcolor=cyan,
    citecolor=magenta
}

\usepackage{setspace}
\newtheorem{innercustomgeneric}{\customgenericname}
\providecommand{\customgenericname}{}
\newcommand{\newcustomtheorem}[2]{%
  \newenvironment{#1}[1]
  {%
   \renewcommand\customgenericname{#2}%
   \renewcommand\theinnercustomgeneric{##1}%
   \innercustomgeneric
  }
  {\endinnercustomgeneric}
}

\newcustomtheorem{customthm}{Theorem}
\newcustomtheorem{customconj}{Conjecture}  
\newcustomtheorem{customprop}{Proposition}  
\newcustomtheorem{customcor}{Corollary}

\theoremstyle{plain}
\newtheorem{thm}{Theorem}[section]
\newtheorem{lem}[thm]{Lemma}
\newtheorem{prop}[thm]{Proposition}

\newtheorem{thm-def}[thm]{Theorem-Definition}

\theoremstyle{definition}
\newtheorem{defn}[thm]{Definition}
\newtheorem{cor-def}[thm]{Corollary-Definition}

\theoremstyle{remark}
\newtheorem{rem}{Remark}

\numberwithin{equation}{section}

\makeatletter
\renewcommand{\paragraph}{%
  \@startsection{paragraph}{4}%
  {\z@}{1.25ex \@plus 1ex \@minus .2ex}{-1em}%
  {\normalfont\normalsize\bfseries}%
}
\makeatother

\usepackage{adjustbox}
\newcommand{\BigWedge}{\mathord{\adjustbox{valign=c,totalheight=0.7\baselineskip}{$\bigwedge$}}}

\newcommand{\gl}{\mathrm{GL}}
\newcommand{\ggl}{\mathfrak{gl}}

\newcommand{\bbg}{\mathbb{G}}

\newcommand{\cf}{\mathcal{F}}
\newcommand{\co}{\mathcal{O}}
\newcommand{\cp}{\mathcal{P}}
\newcommand{\cl}{\mathcal{L}}

\newcommand{\ch}{\mathcal{H}}

\newcommand{\cd}{\mathcal{D}}
\newcommand{\ce}{\mathcal{E}}
\newcommand{\cg}{\mathcal{G}}
\newcommand{\cs}{\mathcal{S}}

\newcommand{\ci}{\mathcal{I}}

\newcommand{\xx}{\mathcal{X}}

\newcommand{\ka}{\mathfrak{a}}
\newcommand{\kg}{\mathfrak{g}}
\newcommand{\kn}{\mathfrak{n}}

\newcommand{\km}{\mathfrak{m}}

\newcommand{\kss}{\mathfrak{S}}
\newcommand{\kxx}{\mathfrak{X}}
\newcommand{\kjj}{\mathfrak{J}}

\newcommand{\bn}{\mathbf{N}}
\newcommand{\bz}{\mathbf{Z}}
\newcommand{\br}{\mathbf{R}}
\newcommand{\bp}{\mathbf{P}}
\newcommand{\bq}{\mathbf{Q}}

\newcommand{\bc}{\mathbf{C}}

\newcommand{\ba}{\mathbf{A}}
\newcommand{\bs}{\mathbf{S}}

\newcommand{\ff}{\boldsymbol{f}}
\newcommand{\jj}{\boldsymbol{j}}

\newcommand{\ppi}{\boldsymbol{\pi}}
\newcommand{\pvarpi}{\boldsymbol{\varpi}}

\newcommand{\rp}{\mathrm{p}}

\newcommand{\rT}{\mathrm{T}}
\newcommand{\rS}{\mathrm{S}}

\newcommand{\val}{\mathrm{val}}
\newcommand{\lie}{\mathrm{Lie}}
\newcommand{\Ad}{\mathrm{Ad}}

\newcommand{\diag}{\mathrm{diag}}

\newcommand{\reg}{\mathrm{reg}}
\newcommand{\spec}{\mathrm{Spec}}

\newcommand{\pic}{\mathrm{Pic}}

\newcommand{\spf}{\mathrm{Spf}}

\newcommand{\Hom}{\mathrm{Hom}}
\newcommand{\ec}{\mathrm{Ec}}
\newcommand{\ind}{\mathrm{ind}}

\newcommand{\End}{\mathrm{End}}

\newcommand{\rk}{\mathrm{rk}}

\newcommand{\ep}{\epsilon}
\newcommand{\varep}{\varepsilon}

\newcommand{\sm}{\mathrm{sm}} 
 
\newcommand{\red}{\mathrm{red}} 
\newcommand{\defm}{\mathrm{Def}}

\newcommand{\jac}{\mathrm{Jac}}
\newcommand{\ql}{\mathbf{Q}_{\ell}}

\newcommand{\fq}{\mathbf{F}_{q}}

\newcommand{\cm}{{\mathcal{M}}}
\newcommand{\ct}{\mathcal{T}}

\newcommand{\cc}{\mathcal{C}}

\def\cb{{\mathcal B}}

\def\ka{{\mathfrak{a}}}

\def\kg{{\mathfrak{g}}}

\def\km{{\mathfrak{m}}}
\def\kn{{\mathfrak{n}}}

\def\kbb{{\mathfrak{B}}}

\def\kxx{{\mathfrak{X}}}

\usepackage{xcolor}
\usepackage{version}

\excludeversion{NB}

\newcommand{\pair}[1]{\langle #1\rangle}

\newcommand{\bsm}{\begin{smallmatrix}}
\newcommand{\esm}{\end{smallmatrix}}

\def\<{\langle\,}
\def\>{\,\rangle}

\def\kss{\mathfrak S}

\def\kg{\mathfrak g}
\def\kn{\mathfrak n}
\def\<{\langle}
\def\>{\rangle}
\def\={\equiv}
\def\le{\leqslant}
\def\ge{\geqslant}

\def\mod{\;{\rm mod}\;}

\def\End{{\rm End}}
\def\Hom{{\rm Hom}}

\def\fq{\mathbb{F}_{q}}

\def\val{{\rm val}}

\def\im{{\rm Im}}

\def\Ker{\mathrm{Ker\,}}

\author{Zongbin \textsc{Chen}}

\title{\bf On the cohomological purity of the affine Springer fibers}
\date{}

\begin{document}
\maketitle

\begin{abstract}

We address questions posed by G\'erard Laumon and Jean-Loup Waldspurger concerning the cohomological purity of affine Springer fibers.
More precisely, we show that an affine Springer fiber is cohomologically pure if and only if its $\xi$-stable quotient is cohomologically pure, and that this is further equivalent to the cohomological purity of a certain sequence of truncated affine Springer fibers.
We deduce from this a sheaf-theoretic reformulation of cohomological purity for affine Springer fibers.
We then compare this new criterion with a previously known one via a microlocal analysis of the relevant intersection complexes.
As a corollary, we show that both the primitive part of the cohomology of an affine Springer fiber and the cohomology of its $\xi$-stable quotient depend only on the root valuation datum of the defining element.
\end{abstract}

\tableofcontents

\section{Introduction}

Let $k$ be a finite field $\fq$ or the field $\bc$ of complex numbers, let $F=k(\!(\varep)\!)$ be the field of Laurent series over $k$, $\co=k[\![\varep]\!]$ its ring of integers.
We fix an algebraic closure $\overline{F}$ of $F$, let $\val:\overline{F}\to \bq$ be the valuation normalized by the condition $\val(\varep)=1$.

Let $G=\gl_{d}$ and let $\kg$ be its Lie algebra. 
Let $\gamma\in \kg[\![\varep]\!]$ be a regular semisimple element. Recall that the \emph{affine Springer fiber} at $\gamma$ is the closed sub ind-$k$-scheme of the \emph{affine grassmannian} $\xx=G(\!(\varep)\!)/G[\![\varep]\!]$ defined by
\begin{align*}
\xx_{\gamma}&=\big\{g\in \xx\,\big|\,\Ad(g^{-1})\gamma \in \kg[\![\varep]\!]\big\}
\\
&=\big\{L\subset F^{d} \text{ a lattice}\mid \gamma L\subset L \big\}.
\end{align*} 
They have been introduced by Kazhdan and Lusztig \cite{kl} as an affine analogue of the Springer fibers, and were used by Goresky, Kottwitz and Macpherson \cite{gkm homology} to prove the  Langlands-Shelstad fundamental lemma in the unramified case--assuming the \emph{purity hypothesis}, which states that the cohomology of $\xx_{\gamma}$ is pure in the sense of Grothendieck-Deligne.

The purity hypothesis has been the focus of my research. During my Ph. D thesis under the supervision of G\'erard Laumon, he asked two questions concerning this hypothesis.
The first one concerns the $\xi$-stable quotient of the affine Springer fibers. In their work on the weighted fundamental lemma \cite{laumon lemme 1}, Chaudouard and Laumon introduced a notion of $\xi$-stability on the Hitchin space. 
At his suggestion, we introduced a similar notion on $\xx_{\gamma}$ (\cite{chen xi} if $\gamma$ is split, and \cite{chen truncate} in general). 
Let $\xx_{\gamma}^{0}$ be the central connected component of $\xx_{\gamma}$. Then the $\xi$-stable quotient $\xx_{\gamma}^{0,\xi}/\rS$ turns out to be proper over $k$, where $\rS$ is the maximal $F$-split subtorus of the centralizer $\rT$ of $\gamma$ in $G$. 
Computations in examples suggest that $\xx_{\gamma}^{0, \xi}/\rS$ may be cohomologically pure, and Laumon asked whether this holds in general. 

The second question concerns a specific sequence of truncated affine Springer fibers. Consider the truncated affine Springer fiber
$$
\xx_{\gamma, n}=\big\{L\subset F^{d} \text{ a lattice}\mid \gamma L\subset L \text{ and } L\subset \co^{d} \text{ of index }n\big\}
$$
For the group $\gl_{4}$ and the regular diagonal matrices $\gamma\in \ggl_{4}(F)$, I  constructed affine pavings for $\xx_{\gamma, n}$ in my thesis \cite{chen gl4}, and deduced from it the purity hypothesis in this case. 
Laumon asked whether $\xx_{\gamma, n}$ always admits an affine paving for the regular diagonal matrices in $\ggl_{d}(F)$.
My experience with $\gl_{5}$ suggests that this may be too optimistic. 
At the defense of my thesis, Prof. Waldspurger asked a similar question, but with the truncation modified as
$$
\xx_{\gamma}(n)=\big\{L\subset F^{d} \text{ a lattice}\mid \gamma L\subset L \text{ and } L\subset \co[\gamma] \text{ of index }n\big\},
$$
where we have identified $F^{d}=\co[\gamma]\otimes_{\co} F$. 
Our first main result answers their questions:
Let $M_{0}$ be the centralizer of $\rS$ in $G$, it is the minimal Levi subgroup of $G$ containing $\gamma$, let $\cl(M_{0})$ be the set of Levi subgroups of $G$ containing $M_{0}$.

\begin{customthm}{1}\label{quotient main}

Let $\gamma \in \kg(F)$ be a regular semisimple element.
Suppose that $\xx_{\gamma}^{M}$ is cohomologically pure for every proper Levi subgroup $M \in \cl(M_{0})$, then $\xx_{\gamma}$ is cohomologically pure if and only if the quotient $\xx_{\gamma}^{0,\,\xi}/\rS$ is. Moreover, this is equivalent to the cohomological purity of all the truncations $\xx_{\gamma}(n)$, $n \in \bn$.

\end{customthm}

Theorem \ref{quotient main} is proved by deformation. We construct a reduced but non-irreducible spectral curve $X_{\gamma}$, and consider the relative $\xi$-stable compactified Jacobian $\ff^{\xi}: \overline{\kjj}^{\,\xi}\to \kbb$ and the relative Hilbert scheme $\ppi^{[n]}:\kxx^{[n]}\to \kbb$ attached to a miniversal deformation $\ppi:\kxx\to \kbb$ of $X_{\gamma}$.
The decompositions of the complexes $R\ff^{\xi}_{*}\ql$ and $R\ppi^{[n]}_{*}\ql$ as direct sum of intersection complexes have been worked out by Chaudouard-Laumon \cite{laumon lemme 2} and Migliorini-Shende-Viviani \cite{miglio 2} respectively.
We prove theorem \ref{quotient main} by a careful analysis of the decomposition theorems and the geometry of the $\xi$-stable compactified Jacobian $\overline{J}_{X_{\gamma}}^{\,\xi}$ and the punctural Hilbert scheme $X_{\gamma}^{[n]}$ of $X_{\gamma}$.

This work goes beyond merely drawing inferences from established results or being driven by pure nostalgia; rather, it stems from an effort to gain a deeper understanding of our work \cite{chen decomposition}.
In this work, we propose an alternative approach to \emph{deforming} the affine Springer fibers. 
Let $C_{\gamma}$ be an irreducible rational projective curve with unique singularity isomorphic to $\spec(\co[\gamma])$. Let $\pi:(\cc, C_{\gamma})\to (\cb, 0)$ be an algebraization of a miniversal deformation of $C_{\gamma}$, let $f:\overline{\cp}\to \cb$ be the relative compactified Jacobian of $\pi$. 
The affine Springer fiber $\xx_{\gamma}$ is homeomorphic to the universal abelian covering of the compactified Jacobian $\overline{P}_{C_{\gamma}}=\overline{\pic}{}^{0}_{C_{\gamma}/k}$. 
We consider finite abelian coverings $\overline{P}_{n}$ of $\overline{P}_{C_{\gamma}}$, and their miniversal equivariant deformations $f_{n}:\overline{\cp}_{n}\to \cb_{n}$, $n\in \bn, (n, p)=1$. They form a projective system and can be regarded as a substitute for the deformation of $\xx_{\gamma}$. 
Let $j:\cb^{\sm}\to \cb$ be the inclusion of the open subscheme over which $\pi$ is smooth, let $\pi^{\sm}$ be the restriction of $\pi$ to the inverse image of $\cb^{\sm}$. 
Let $\cf=R^{1}\pi^{\sm}_{*}\ql$ and $\ce\subset \cf|_{U}$ the local system of vanishing cycles for the singularity of $C_{\gamma}$, where $U$ is a sufficiently small open neighbourhood of $0$, let $\ce^{\perp}$ be the orthogonal complement of $\ce$ with respect to the cup product of $\cf$. 
We obtain a decomposition of $Rf_{n, *}\ql$ as a direct sum of intersection complexes, and deduce from it that 
\begin{align*}
H^{i}\big({\xx}_{\gamma}^{0},\bq\big)=&
\bigoplus_{i'=0}^{i} 
{\rm Im}\Big\{\ch^{i-i'}\big(j_{!*}\BigWedge^{i'}\cf\big)_{0}\to  
\ch^{i-i'}\big(j_{!*}\BigWedge^{i'}(\cf/\ce^{\perp})\big)_{0} \Big\}\oplus \\
&
\bigoplus\text{ Similar terms for the Levi subgroups}. 
\end{align*}
The cohomological purity of $\xx_{\gamma}$ is then equivalent to that of the \emph{primitive part}
\begin{equation}\label{springer sheaf}\tag{1}
{\rm Im}\Big\{\ch^{*}\big(j_{!*}\BigWedge^{i}\cf\big)_{0}\to  
\ch^{*}\big(j_{!*}\BigWedge^{i}(\cf/\ce^{\perp})\big)_{0} \Big\}.
\end{equation}
Thus, we obtain a sheaf-theoretic reformulation of the purity hypothesis, which provides a framework to attack the hypothesis from the perspective of deformations.

But there is a technical difficulty in this work: the sheaf $\cf/\ce^{\perp}$ lives on a sufficiently small open neighbourhood of $0$ and can not be globalized. 
Meanwhile, its subsheaf $\ce/\ce^{\perp}$ is irreducible and has a chance of being globalized. 
We have speculated that the image of $(\ref{springer sheaf})$ actually lies in the image of $\ch^{*}\big(j_{!*}\BigWedge^{i}(\ce/\ce^{\perp})\big)_{0}$, and that the cohomological purity of $\xx_{\gamma}$ should follow from the purity of this object.
It is the realization that $\ce/\ce^{\perp}$ actually globalizes to the sheaf $R^{1}\ppi^{\sm}_{*}\ql$ that leads us to consider the family $\ff^{\xi}: \overline{\kjj}^{\,\xi}\to \kbb$. Computations in examples suggest that we should also consider the family $\ppi^{[n]}:\kxx^{[n]}\to \kbb$. 
The confluence of these ideas then leads to theorem \ref{quotient main}. 
Moreover, by theorem \ref{quotient main} and the decomposition theorem \ref{cj support}, we can deduce directly another reformulation of the purity hypothesis, without recourse to our earlier work \cite{chen decomposition}.  

\begin{customcor}{2}\label{purity reformulated}
Suppose that $\xx_{\gamma}^{M}$ is cohomologically pure for every proper Levi subgroup $M \in \cl(M_{0})$, then $\xx_{\gamma}$ is cohomologically pure if and only if the cohomology groups
$\ch^{q}\big(\jj_{!*}\BigWedge^{i}(R^{1}\ppi^{\sm}_{*}\ql)\big)_{0}$
are pure of weight $q+i$ for all $q$ and $i$, where $\jj:\kbb^{\sm}\to \kbb$ is the inclusion of the open subscheme over which $\ppi$ is smooth and $\ppi^{\sm}$ is the restriction of $\ppi$ to the inverse image of $\kbb^{\sm}$.
\end{customcor}

We then proceed to compare this new sheaf-theoretic reformulation of cohomological purity  with the one mentioned above. 
To this end, we compare the microlocal analysis of the primitive part, as defined in equation $(\ref{springer sheaf})$, with that of the complex $\jj_{!*}\BigWedge^{i}(R^{1}\ppi^{\sm}_{*}\ql)$. The results are summarized in theorem \ref{morse primitive} and \ref{xi morse group}. 
These theorems exhibit notable similarities: in both cases the strict $\delta$-strata parametrized by Levi subgroups in $\cl(M_{0})$ makes no contribution to the Morse groups. 
As a consequence of the microlocal analysis, we use techniques developed in our work \cite{chen root valuation} to deduce that:

\begin{customthm}{3}\label{local constance}

Both the primitive part of $H^{*}(\xx_{\gamma}, \ql)$ and the cohomologies of the $\xi$-stable quotient $\xx_{\gamma}^{0,\, \xi}/\rS$ depends only on the root valuation datum of $\gamma$. 

\end{customthm}

In our work \cite{chen purity}, we prove the purity hypothesis for totally ramified elements $\gamma\in \kg(F)$.
Corollary \ref{purity reformulated} and Theorem \ref{local constance} constitute the main ingredients for a potential generalization of this proof to arbitrary semisimple regular elements. What remains missing is a suitable generalization of Teissier’s partial compactification of the moduli space of unibranch plane curve singularities \cite{teissier monomial} to the setting of reduced plane curve singularities—a problem for which we currently have no clear approach.

\subsection*{Notations and conventions}

\paragraph{(1)} 

We consider only schemes which are separated and of finite type over $k$, unless stated otherwise. For a geometric point $x$ of $X$, we denote by $X_{\{{x}\}}$ the strict Henselization of $X$ at ${x}$.

\paragraph{(2)}

Let $X$ be a locally noetherian scheme, we denote by $X^{\reg}$ its regular locus and $X^{\rm sing}=X-X^{\reg}$ the singular locus. 
Let $f:X\to S$ be a morphism of schemes, we will denote by $\Delta_{f}$ the discriminant locus of $f$, and by $S^{\sm}$ the open subscheme $S-\Delta_{f}$ over which $f$ is smooth, and $f^{\sm}:X^{\rm sm}\to S^{\sm}$  the  restriction of $f$ to the inverse image of $S^{\sm}$. For any point $s\in S$, we denote by $X_{s}$ the fiber of $f$ at $s$.

\paragraph{(3)}

Let $X$ be a locally noetherian scheme over $k$, we denote by $\cd_{c}(X,\ql)$ the derived category of $\ql$-constructible sheaves on $X$ defined by Deligne \cite{weil2}, \S1.1.2 and \S1.1.3. 
We denote by $\cp(X,\ql)$ the strict full subcategory of $\cd_{c}^{b}(X,\ql)$ consisting of the perverse $\ql$-sheaves. It is the heart of a $t$-structure on $\cd_{c}^{b}(X,\ql)$ associated to the autodual perversity. It can be shown that $\cp(X,\ql)$ is a notherian and artinian abelian category (cf. \cite{bbdg}). 
We denote by ${}^{p}\ch^{0}$ the cohomological functor $\tau_{\ge 0}\tau_{\le 0}$ from $\cd_{c}(X,\ql)$ to its heart $\cp(X,\ql)$. 
For $n\in \bz$, we denote by $[n]$ the shift to the left by $n$ in $\cd_{c}(X,\ql)$ and ${}^{p}\ch^{n}$ the composite ${}^{p}\ch^{0}\circ [n]$, we denote also by $(n)$ the Tate twist by $\mathbf{Q}_{\ell}(n)=\varprojlim_{m} \mu_{\ell^{m}}^{\otimes n}$.

\paragraph{(4)}

Let $A=k[\![x,y]\!]/(f(x,y))$ and $C^{\circ}=\spf(A)$ be a germ of plane curve singularity, its \emph{$\delta$-invariant} is defined as $\delta(C^{\circ}):=\dim_{k}(A^{\flat}/A)$, where $A^{\flat}$ is the normalization of $A$.

\paragraph{(5)}

Let $C$ be a projective algebraic curve over $k$, its \emph{$\delta$-invariant} $\delta(C)$ is defined as the sum of the $\delta$-invariants of its singularities, its \emph{arithmetic genus} is the invariant $p_{a}(C)=\dim H^{1}(C, \co_{C})$, and its \emph{geometric genus} $p_{g}(C)$ is the sum of the genus of the irreducible components of the normalization of $C$.

\paragraph{(6)}

Let $A$ be a split maximal torus of $G$, we denote the root system of $G$ with respect to $A$  by $\Phi(G,A)$. For any subgroup $H$ of $G$ which is stable under the conjugation of $A$, we note $\Phi(H,A)$ for the roots appearing in $\lie(H)$. 

\paragraph{(7)}

We use the $(G,M)$ notation of Arthur \cite{a}. Let $T$ be a maximal torus of $G$, let $\cf(G, T)$ be the set of parabolic subgroups of $G$ containing $T$, let $\cl(G,T)$ be the set of Levi subgroups of $G$ containing $T$. For every $M\in \cl(G,T)$, we denote by $\cp(G, M)$ the set of parabolic subgroups of $G$ whose Levi factor is $M$, by $\cl(G, M)$ the set of Levi subgroups of $G$ containing $M$, and by $\cf(G, M)$ the set of parabolic subgroups of $G$ containing $M$. If the context is clear, we will simplify the notations by omitting $G$.

Let $X^*(M)=\Hom(M, \bbg_m)$ and $X_{*}(M)=\Hom(X^{*}(M), \bz)$.
Let $\ka_M^{*}=X^*(M)\otimes\br$ and $\ka_M=X_{*}(M)\otimes\br$. The restriction $X^{*}(M)\to X^{*}(T)$ induces an injection $\ka_{M}^{*}\hookrightarrow \ka_{T}^{*}$. Let $(\ka_{T}^{M})^{*}$ be the subspace of $\ka_{T}^{*}$ generated by the roots in $\Phi(M,T)$. We have the decomposition in direct sums
$$
\ka_{T}^{*}=(\ka_{T}^{M})^{*}\oplus \ka_{M}^{*}.
$$

The canonical pairing 
$
X_{*}(T)\times X^{*}(T)\to \bz
$ 
can be extended bilinearly to $\ka_{T}\times \ka_{T}^{*}\to \br$. For $M\in \cl(G, T)$, we can embed $\ka_{M}$ in $\ka_{T}$ as the orthogonal subspace to $(\ka_{T}^{M})^{*}$. Let $\ka_{T}^{M}\subset \ka_{T}$ be the subspace orthogonal to $\ka_{M}^{*}$. We have the dual decomposition
$$
\ka_{T}=\ka_{M}\oplus \ka_{T}^{M}.
$$
More generally, for $L,M\in \cf(G,T),\,M\subset L$, we also have a decomposition
$$
\ka_{M}^{G}=\ka_{L}^{G}\oplus \ka_{M}^{L}.
$$
Let $\pi_{M,L},\,\pi_{M}^{L}$ be the projections to the two factors. If the context is clear, we also simplify it to $\pi_{L},\,\pi^{L}$.
For an element $\xi\in \ka_{M}^{G}$, we denote $\xi_{L}=\pi_{L}(\xi)$ and $\xi^{L}=\pi^{L}(\xi)$.

\paragraph{(8)}

Let $
\nu: \xx\to \bz
$
be the mapping $[g]\in \xx\mapsto \val(\det(g))$, it is a fact that $\xx^{n}:=\nu^{-1}(n), n\in \bz,$ are all the connected components of the affine grassmannian $\xx$, and that all of them are isomorphic to the central connected component $\xx^{0}$.  
Intersecting with the affine Springer fiber $\xx_{\gamma}$, we get all its connected components $\xx_{\gamma}^{n}:=\xx_{\gamma}\cap \xx^{n}$, $n\in \bz$. 

More generally, let $M$ be a Levi subgroup of $G$, we take a split maximal torus $A$ of $M$, and define $\Lambda_{M}$ as the quotient of $X_{*}(A)$ by the coroot lattice of $M$. It is independent of the choice of $A$, this is the algebraic fundamental group introduced by Borovoi \cite{bo}. According to Kottwitz \cite{k1}, we have a canonical homomorphism
\begin{equation}\tag{2}\label{indexM}
\nu_{M}: M(F)\to \Lambda_{M},
\end{equation}
which is characterized by the following properties: it is trivial on the image of $M_{\mathrm{sc}}(F)$ in $M(F)$ ($M_{\mathrm{sc}}$ is the simply connected cover of the derived group of $M$), and its restriction to $A(F)$ coincides with the composition $A(F)\to A(F)/A(\co)\cong X_{*}(A)\to \Lambda_{M}$. Moreover, the morphism (\ref{indexM}) is trivial on $M(\co)$, hence descends to a map
$$
\nu_{M}:\xx^{M}\to \Lambda_{M},
$$
whose fibers are the connected components of $\xx^{M}$. Similarly, intersecting with the affine Springer fiber $\xx^{M}_{\gamma}$, we get all of its connected components $\xx_{\gamma}^{M, \lambda}:=\xx^{M}_{\gamma}\cap \nu_{M}^{-1}(\lambda)$, $\lambda\in \Lambda_{M}$. 

{\small
\paragraph*{Acknowledgement} 

I would like to dedicate this work to the memory of Professor G\'erard Laumon, in fond remembrance of the warm and formative days of my mathematical youth.
I would also like to thank the Research Center for Mathematics and Interdisciplinary Sciences at Shandong University for its generous support during my visits.

}

\section{Families attached to the deformation of a non-irreducible spectral curve}

Let $\gamma\in\ggl_{d}(F)$ be a regular semisimple integral element, we construct a reduced but non-irreducible spectral curve $X_{\gamma}$, and consider the relative $\xi$-stable compactified Jacobian and the relative Hilbert scheme attached to a miniversal deformation of $X_{\gamma}$.
As we are interested in the geometry of the affine Springer fiber, we work with $k=\bc$.

\subsection{The miniversal deformation of a non-irreducible spectral curve}\label{deform XI}

Let $\gamma=\diag(\gamma_{1},\cdots, \gamma_{r})$ with $\gamma_{i}\in \ggl_{d_{i}}$ such that the characteristic polynomial of $\gamma_{i}$ is irreducible over $F$.
Let $A=\co[\gamma]$ and $A_{i}=\co[\gamma_{i}], i=1,\cdots, r$. Let $X_{\gamma}^{\circ}=\spec(A)$ and $X_{i}^{\circ}=\spec(A_{i})$, then $X_{\gamma}^{\circ}=\bigcup_{i=1}^{r}X_{i}^{\circ}$. 
Let $0$ be the unique singularity of $X_{\gamma}^{\circ}$. 
For each germ $X_{i}^{\circ}$, we complete it to a rational irreducible projective curve $X_{i}$ with unique singularity at $0$. Let $X_{\gamma}=\bigcup_{i=1}^{r}X_{i}$ be the gluing of $X_{i}$ at the point $0$, such that the germs $X_{i}^{\circ}$ glue together to get $X_{\gamma}^{\circ}$ and that $X_{\gamma}\backslash\{0\}$ is smooth. 

\begin{prop}

The arithmetic genus of $X_{\gamma}$ is  $p_{a}(X_{\gamma})=\delta_{\gamma}-r+1$, where $\delta_{\gamma}$ is the $\delta$-invariant of the singularity $X_{\gamma}^{\circ}$.

\end{prop}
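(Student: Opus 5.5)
We compute the arithmetic genus by comparing $X_{\gamma}$ with its normalization. Let $\nu:\widetilde{X}_{\gamma}\to X_{\gamma}$ be the normalization. Since each $X_{i}$ is rational and irreducible, $\widetilde{X}_{\gamma}=\bigsqcup_{i=1}^{r}\widetilde{X}_{i}$ with $\widetilde{X}_{i}\cong\bp^{1}$. The map $\nu$ is an isomorphism away from $0$, because $X_{\gamma}\backslash\{0\}$ is smooth.

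The first step is the short exact sequence of coherent sheaves on $X_{\gamma}$
$$
0\to \co_{X_{\gamma}}\to \nu_{*}\co_{\widetilde{X}_{\gamma}}\to \mathcal{Q}\to 0 .
$$
Here $\mathcal{Q}$ is a skyscraper sheaf supported at $0$. Its stalk is $\widetilde{A}/A$, where $\widetilde{A}$ is the normalization of the completed local ring $A=\co[\gamma]$; completion commutes with normalization, since these are excellent rings. By convention (4), $\dim_{k}\mathcal{Q}_{0}=\delta_{\gamma}$. Note that $\widetilde{A}=\prod_{i}\widetilde{A_{i}}$, so $\delta_{\gamma}$ is the $\delta$-invariant of the whole non-irreducible germ, not just the sum of the $\delta$-invariants of the branches.

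The second step takes Euler characteristics:
$$
\chi(\co_{X_{\gamma}})=\chi(\nu_{*}\co_{\widetilde{X}_{\gamma}})-\delta_{\gamma}.
$$
Since $\nu$ is finite, $\chi(\nu_{*}\co_{\widetilde{X}_{\gamma}})=\chi(\co_{\widetilde{X}_{\gamma}})=\sum_{i}\chi(\co_{\bp^{1}})=r$.

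The third step uses that $X_{\gamma}$ is connected and reduced. It is connected because all the $X_{i}$ pass through $0$, and it is reduced because it is a union of reduced curves glued along a reduced germ. It is also projective, so $H^{0}(X_{\gamma},\co_{X_{\gamma}})=k$, which gives $\chi(\co_{X_{\gamma}})=1-p_{a}(X_{\gamma})$. Combining the two steps,
$$
1-p_{a}(X_{\gamma})=r-\delta_{\gamma},\qquad\text{that is,}\qquad p_{a}(X_{\gamma})=\delta_{\gamma}-r+1 .
$$

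The only point needing care is the identification of $\mathcal{Q}_{0}$ with $\widetilde{A}/A$. It holds because the gluing was chosen so that the completed local ring of $X_{\gamma}$ at $0$ is exactly $A$, and $\widetilde{A}/A$ has finite length, so it can be computed after completion. Everything else is routine.
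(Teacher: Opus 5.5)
Your proof is correct and follows the same route as the paper. The paper also uses the normalization sequence $0\to\mathcal{O}_{X_{\gamma}}\to\phi_{*}\mathcal{O}_{X_{\gamma}^{\flat}}\to A^{\flat}/A\to 0$ with $X_{\gamma}^{\flat}$ a disjoint union of $r$ copies of $\mathbf{P}^{1}$, then takes Euler characteristics; you additionally make explicit the step the paper leaves implicit, namely that $h^{0}(\mathcal{O}_{X_{\gamma}})=1$ (since $X_{\gamma}$ is connected, reduced and projective), which turns $\chi(\mathcal{O}_{X_{\gamma}})=r-\delta_{\gamma}$ into $p_{a}(X_{\gamma})=\delta_{\gamma}-r+1$.
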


\begin{proof}

Let $\phi: X_{\gamma}^{\flat}\to X_{\gamma}$ be the normalization, then $X_{\gamma}^{\flat}$ is the disjoint union of $r$ copies of $\bp^{1}$. Let $A^{\flat}$ be the normalization of $A$, we have an exact sequence
$$
0\to \co_{X_{\gamma}}\to \phi_{*}\co_{X_{\gamma}^{\flat}}\to A^{\flat}/A\to 0,
$$ 
from which we deduce 
$$
\chi(X_{\gamma})=\chi(X_{\gamma}^{\flat})-\delta_{\gamma}. 
$$
The assertion then follows. 
\end{proof}

As explained in \cite{melo fourier 1}, \S3.1, there exists an algebraization of a miniversal deformation of $X_{\gamma}$: 
$$
\ppi:(\mathfrak{X}, X_{\gamma})\to (\kbb, 0).
$$
This family contains several naturally occurring subfamilies:
For a subset $I\subset \{1,\cdots, r\}$,  let $X_{I}\subset X_{\gamma}$ be the union $\bigcup_{i\in I}X_{i}$. 
For a partition $\{1,\cdots, r\}=\bigsqcup_{j=1}^{l}I_{j}$ of length $l$, denoted $I_{\bullet}\vdash \{1,\cdots,r\}$, let $X_{I_{\bullet}}$ be the disjoint union $\bigsqcup_{j=1}^{l} X_{I_{j}}$.
For such a partition, we get a subfamily $\ppi_{I_{\bullet}}: \kxx_{I_{\bullet}}\to \kbb_{I_{\bullet}}$, in which each $X_{I_{j}}$ deforms independently of the others, while the intersection number between deformations of $X_{I_{j}}$ and $X_{I_{j'}}$ remains constant for all $j\neq j'$.
These subfamilies also appear in our work \cite{chen decomposition}, \S4.4, albeit in different form. 
In fact, we employ different globalizations of the same local situation: the restrictions of both  families--$\pi:\cc\to \cb$ and $\ppi:\kxx\to \kbb$--to a sufficiently small open neighborhood of $0$ are miniversal deformations of the singularity $\spf(\co[\gamma])$. Consequently, the local study in a neighborhood of $0$ carried out over there applies to our current setting.

For a subset $I\subset \{1,\cdots, r\}$, let $\gamma_{I}=\diag(\gamma_{i})_{i\in I}$ and let $A_{I}=\co[\gamma_{I}]$, then $X_{I}^{\circ}:=\spec(A_{I})=\bigcup_{i\in I}X_{i}^{\circ}$. 
For a partition $\{1,\cdots, r\}=\bigsqcup_{j=1}^{l}I_{j}$, let $A_{I_{\bullet}}=\prod_{j=1}^{l}A_{I_{l}}$ and let $X_{I_{\bullet}}^{\circ}=\spec(A_{I_{\bullet}})=\bigsqcup_{j=1}^{l}X_{I_{j}}^{\circ}$. 
Consider the deformation functor
$$
\defm_{X^{\circ}_{I_{\bullet}}\to X_{\gamma}^{\circ}}^{\rm top}:=\defm_{A\hookrightarrow A_{I_{\bullet}}}^{\rm top}: {\rm Art}_{k}\to {\rm Sets}, 
$$
which sends an artinian $k$-algebra $R$ to the set of isomorphism classes of homomorphisms of $R$-algebras $A_{R}\to A_{I_{\bullet}, R}$ whose reduction modulo $\km_{R}$ equals $A\hookrightarrow A_{I_{\bullet}}$, here $A_{R}$ is a flat deformation of $A$ over $R$ and $A_{I_{\bullet}, R}$ is a flat deformation of $A_{I_{\bullet}}$. 
Similarly, we can define the deformation functors $\defm_{X_{\gamma}^{\circ}}^{\rm top}:=\defm_{A}^{\rm top}$ and $\defm_{X_{I_{\bullet}}^{\circ}}^{\rm top}:=\defm_{A_{I_{\bullet}}}^{\rm top}$.
All these functors are pro-representable, and we have the forgetful morphisms
$$
\defm_{A\hookrightarrow A_{I_{\bullet}}}^{\rm top}\to \defm_{A}^{\rm top} 
\quad \text{and}\quad
\defm_{A\hookrightarrow A_{I_{\bullet}}}^{\rm top}\to \defm_{A_{I_{\bullet}}}^{\rm top}.
$$

\begin{prop}\label{formal smooth}
The functor $\defm_{A\hookrightarrow A_{I_{\bullet}}}^{\rm top}$ is formally smooth, and the morphism $$\defm_{A\hookrightarrow A_{I_{\bullet}}}^{\rm top}\to \defm_{A_{I_{\bullet}}}^{\rm top}
$$
is formally smooth.

\end{prop}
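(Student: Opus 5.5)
Since $A_{I_\bullet}=\prod_{j}A_{I_j}$ is a finite product of reduced plane curve singularities, each local complete intersection, $\defm_{A_{I_\bullet}}^{\rm top}=\prod_j \defm_{A_{I_j}}^{\rm top}$ is formally smooth. The first assertion therefore follows from the second, because a formally smooth morphism to a formally smooth functor has formally smooth source. So I concentrate on the forgetful morphism $\defm_{A\hookrightarrow A_{I_\bullet}}^{\rm top}\to \defm_{A_{I_\bullet}}^{\rm top}$. Concretely, let $R'\to R$ be a small extension in ${\rm Art}_k$ with kernel $k\epsilon$. Suppose we are given a deformation $A_R\to A_{I_\bullet,R}$ over $R$ and a lift $A_{I_\bullet,R'}$ of the target. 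I must produce a flat lift $A_{R'}$ of $A_R$ with a compatible homomorphism $A_{R'}\to A_{I_\bullet,R'}$.

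The key point is that $A\hookrightarrow A_{I_\bullet}$ is a finite birational injection: it is an isomorphism after inverting $\varep$, and $\dim_k(A_{I_\bullet}/A)=\sum_{j\neq j'}(X_{I_j}\cdot X_{I_{j'}})<\infty$. I would use the following construction, analogous to deformations of the map to the normalization (as in the $\delta$-constant stratum story). Write $A_{I_\bullet}=\prod_j k[\![x,y]\!]/(f_j)$ and $A=k[\![x,y]\!]/(\prod_j f_j)$, with $A\to A_{I_\bullet}$ induced by the diagonal embedding. Given the lift $A_{I_\bullet,R'}=\prod_j R'[\![x,y]\!]/(F_j')$, I take $A_{R'}$ to be the image of $R'[\![x,y]\!]$ in $A_{I_\bullet,R'}$, which equals $R'[\![x,y]\!]/(\prod_j F_j')$.

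This is a flat $R'$-algebra lifting $A$, with a homomorphism to $A_{I_\bullet,R'}$. For this step I need that every deformation $A_R\to A_{I_\bullet,R}$ is of this ``product'' form up to isomorphism. I would prove this by induction along small extensions. First, the plane embedding extends: any deformation of a plane curve germ is a hypersurface $R[\![x,y]\!]/(F)$, and the map to $A_{I_\bullet,R}$ is determined by the images of $x,y$. These images can be lifted componentwise to give presentations $A_{I_j,R}=R[\![x,y]\!]/(F_j)$. Second, one checks $F=u\prod_j F_j$ with $u$ a unit. Indeed, $F$ maps to zero in each factor, so $F_j\mid F$ successively. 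This uses that the $F_j$ are pairwise coprime modulo $\km_R$, so that $(F_j,F_{j'})$ defines a finite-length scheme, together with the Weierstrass division over $R$. The unit then follows by comparing reductions mod $\km_R$.

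The compatibility of the lift with the given $A_R$ is then automatic: reducing $\prod F_j'$ mod $\epsilon$ gives $\prod F_j$. The lifted homomorphism restricts to the given one over $R$.

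I expect the main obstacle to be the divisibility and uniqueness step over Artinian bases. This is the claim that a hypersurface $R[\![x,y]\!]/(F)$ mapping to $\prod_j R[\![x,y]\!]/(F_j)$, compatibly with the reductions, forces $F$ to be a unit multiple of $\prod F_j$. A cleaner alternative I would try first is a tangent–obstruction computation: show that obstructions lie in $\mathrm{Ext}^2$/$T^2$ groups of the pair $A\to A_{I_\bullet}$. By the long exact sequence relating $T^i_{A\to A_{I_\bullet}}$ to $T^i_{A_{I_\bullet}}$ and $\Hom(\Omega_A, A_{I_\bullet}/A)$-type terms, one would argue that these groups vanish because $A$ is a hypersurface and $A_{I_\bullet}/A$ has projective dimension one. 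If that computation becomes delicate, I would fall back on the explicit equation-based lifting above.
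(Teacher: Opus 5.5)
Your proposal is correct, and it takes a genuinely different route from the paper. The paper gives no argument at this point: it imports the statement from Corollaries 4.31 and 4.32 of \cite{chen decomposition}, and then combines it with Proposition 4.28 there to identify $\defm^{\rm top}_{A\hookrightarrow A_{I_\bullet}}$ with the normalization of $\widehat{\kbb}_{I_\bullet}$.

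You instead give a self-contained, elementary proof that uses only planarity. An object of $\defm^{\rm top}_{A\hookrightarrow A_{I_\bullet}}(R)$ amounts to a deformation $\prod_j R[\![x,y]\!]/(F_j)$ of $A_{I_\bullet}$ together with the images of the two coordinate functions, and the source is then forced to be $R[\![x,y]\!]/(\prod_j F_j)$. Lifting those images into a given $A_{I_\bullet,R'}$ and taking the image of $R'[\![x,y]\!]$ produces the required lift. Formal smoothness of the source then follows because hypersurface deformations are unobstructed.

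Your route buys transparency. It exhibits the fibres of the forgetful map as (classes of) choices of lifts of $x,y$. It also shows directly that the image in $\defm^{\rm top}_{A}$ consists of the deformations whose equation factors as a product of deformations of the $f_j$, which is the geometric content of $\widehat{\kbb}_{I_\bullet}$. The citation buys brevity and consistency with the framework of \cite{chen decomposition}, on which the subsequent identification with the normalization rests anyway.

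A few points should be tightened.

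\begin{enumerate}
\item The divisibility step needs no Weierstrass division. If $M$ is flat over an Artinian local ring $R$ and $\phi\in\End_R(M)$ is injective modulo $\km_R$, then $\phi$ is injective with $R$-flat cokernel. Apply this to multiplication by $\prod_{j<m}F_j$ on $R[\![x,y]\!]/(F_m)$. Its reduction is multiplication by $\prod_{j<m}f_j$ on $k[\![x,y]\!]/(f_m)$, which is injective since $\prod_j f_j$ is reduced. This gives $\bigcap_j (F_j)=(\prod_j F_j)$ at once, hence both $F=u\prod_j F_j$ over $R$ and the flatness and identification of the image $A_{R'}$ over $R'$.

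\item The presentation $A_{I_j,R'}=R'[\![x,y]\!]/(F'_j)$ must be the one obtained by lifting the images $a_j,b_j\in A_{I_j,R}$ of $x,y$ under the given map. Surjectivity comes from Nakayama, and the kernel is principal by flatness. Only with this choice does $F'_j$ reduce to a unit multiple of $F_j$, so that compatibility with the given $A_R\to A_{I_\bullet,R}$ is automatic.

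\item Drop the $T^2$ alternative; it is not needed, and its stated justification is false. $A_{I_\bullet}/A$ has finite length, hence projective dimension $2$ over $k[\![x,y]\!]$. Over $A$ it has infinite projective dimension, since $A_{I_\bullet}$ is maximal Cohen--Macaulay but not free over $A$ when $l\ge 2$.

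\item A minor slip: $\dim_k(A_{I_\bullet}/A)=\sum_{j<j'}(X_{I_j}\cdot X_{I_{j'}})$, not $\sum_{j\neq j'}$.
\end{enumerate}
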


\begin{proof}

This is just the corollary 4.31 and 4.32 of \cite{chen decomposition}. 

\end{proof}

Let $\widehat{\kbb}_{I_{\bullet}}$ be the formal neibourhood of $0$ in $\kbb_{I_{\bullet}}$. By proposition 4.28 of \cite{chen decomposition}, we obtain that $\widehat{\kbb}_{I_{\bullet}}$ is the schematic image of the forgetful functor
$$
\defm_{A\hookrightarrow A_{I_{\bullet}}}^{\rm top}\to \defm_{A}^{\rm top}.
$$
The resulting morphism 
$$
\defm_{A\hookrightarrow A_{I_{\bullet}}}^{\rm top}\to\widehat{\kbb}_{I_{\bullet}}
$$ 
is finite and generically an isomorphism. 
The functor $\defm_{A\hookrightarrow A_{I_{\bullet}}}^{\rm top}$ is formally smooth by proposition \ref{formal smooth}, hence it is pro-representable by the normalization ${\widehat{\kbb}}_{I_{\bullet}}^{\flat}$ of $\widehat{\kbb}_{I_{\bullet}}$. 
With this, we can reformulate proposition \ref{formal smooth} as:

\begin{prop}\label{strata local}

The normalization ${\widehat{\kbb}}_{I_{\bullet}}^{\flat}$ of $\widehat{\kbb}_{I_{\bullet}}$ is formally smooth over $k$, and the morphism ${\widehat{\kbb}}_{I_{\bullet}}^{\flat} \to \prod_{j=1}^{l}\defm^{\rm top}_{X^{\circ}_{I_{j}}}$ is formally smooth.

\end{prop}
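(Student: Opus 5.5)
The plan is to translate Proposition \ref{formal smooth} through the identification, made just above, of the pro-representing object of $\defm_{A\hookrightarrow A_{I_{\bullet}}}^{\rm top}$ with $\widehat{\kbb}_{I_{\bullet}}^{\flat}$. First I will pin down this identification. Let $R$ be the complete local $k$-algebra pro-representing $\defm_{A\hookrightarrow A_{I_{\bullet}}}^{\rm top}$. The forgetful morphism to $\defm_{A}^{\rm top}$, composed with the formal neighbourhood of $0$ in $\kbb$ (which pro-represents, or at least is a hull for, $\defm_A^{\rm top}$ by miniversality of $\ppi$), gives a map $\spf(R)\to \widehat{\kbb}$. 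By Proposition 4.28 of \cite{chen decomposition} its schematic image is $\widehat{\kbb}_{I_{\bullet}}$, and the induced map $\spf(R)\to\widehat{\kbb}_{I_{\bullet}}$ is finite and generically an isomorphism.

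By the first half of Proposition \ref{formal smooth}, $R$ is formally smooth, hence a power series ring and in particular a normal domain. A finite morphism from a normal integral formal scheme onto a reduced one which is birational, i.e. an isomorphism over a dense open subset, factors through the normalization and is an isomorphism onto it. Hence $\spf(R)\cong \widehat{\kbb}_{I_{\bullet}}^{\flat}$, and the first assertion follows: $\widehat{\kbb}_{I_{\bullet}}^{\flat}$ is formally smooth over $k$.

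For the second assertion, $\defm_{A_{I_{\bullet}}}^{\rm top}=\prod_{j=1}^{l}\defm_{A_{I_j}}^{\rm top}$, because deformations of a finite product of complete local algebras are products of deformations of the factors. Transporting the second half of Proposition \ref{formal smooth} along the isomorphism $\spf(R)\cong \widehat{\kbb}_{I_{\bullet}}^{\flat}$ then gives the formal smoothness of $\widehat{\kbb}_{I_{\bullet}}^{\flat}\to \prod_{j}\defm^{\rm top}_{X^{\circ}_{I_j}}$.

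I expect the main obstacle to be the birationality step: we must check that $\spf(R)\to \widehat{\kbb}_{I_{\bullet}}$ is generically injective, and that $\widehat{\kbb}_{I_{\bullet}}$ is reduced and irreducible so that "normalization" makes sense. I would handle this by taking these facts directly from Proposition 4.28 of \cite{chen decomposition}. The heuristic reason they hold is that over a general point of the image the deformed curve has exactly $l$ components, grouped as prescribed by $I_{\bullet}$. The partial normalization that separates them is then uniquely determined, so the lift to $\defm_{A\hookrightarrow A_{I_{\bullet}}}^{\rm top}$ is unique.
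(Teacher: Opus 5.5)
Your proposal is correct and is essentially the paper's own argument. Proposition 4.28 of \cite{chen decomposition} gives a finite, generically isomorphic map from the formally smooth (hence normal) pro-representing object of $\defm^{\rm top}_{A\hookrightarrow A_{I_{\bullet}}}$ onto $\widehat{\kbb}_{I_{\bullet}}$, which identifies it with $\widehat{\kbb}_{I_{\bullet}}^{\flat}$; Proposition \ref{formal smooth} is then restated via the splitting $\defm^{\rm top}_{A_{I_{\bullet}}}=\prod_{j}\defm^{\rm top}_{A_{I_{j}}}$, which you justify and the paper leaves implicit. Your worry about the integrality of $\widehat{\kbb}_{I_{\bullet}}$ is moot, since as the schematic image of $\spf(R)$ its coordinate ring embeds in the domain $R$.
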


\begin{rem}\phantomsection\label{partition levi}

\begin{enumerate}[label=$(\arabic*)$, itemsep=2pt]
\item
For the normalization ${\widehat{\kbb}}_{I_{\bullet}}^{\flat} \to \widehat{\kbb}_{I_{\bullet}}$, the inverse image of $0\in \widehat{\kbb}_{I_{\bullet}}$ consists of exactly one point, it corresponds to the pair $A\hookrightarrow A_{I_{\bullet}}$, we denote it by $0_{I_{\bullet}}$.

\item

There is a bijection between the set of partitions of $\{1,\cdots, r\}$ and the set $\cl(M_{0})$ of Levi subgroups, given by
$$
\{1,\cdots, r\}=\bigsqcup_{j=1}^{l} I_{j}\mapsto M_{I_{\bullet}}:=\prod_{j=1}^{l}\gl\Big(\textstyle{\bigoplus_{i\in I_{j}}} F(\gamma_{i})\Big).
$$
It is helpful to keep it in mind when interpreting results that will show up.

\end{enumerate}

\end{rem}

\subsection{The relative $\xi$-stable compactified Jacobian of the deformation}

Let $X$ be a connected reduced projective algebraic curve over $k$ with planar singularities. 
Recall that the compactified Picard scheme $\overline{\pic}_{X/k}$ of $X$ parametrizes the isomorphism class of rank $1$ torsion-free simple coherent $\co_{X}$-modules \cite{ak}. Here a coherent $\co_{X}$-module $\mathcal{I}$ is said to be 
\begin{enumerate}[topsep=-2pt, itemsep=-2pt]
\item[---]
\emph{of rank} $1$, if $\ci$ has generic rank $1$ on every irreducible component of $X$,

\item[---]
\emph{torsion-free}, if $\mathrm{Supp}(\ci)=X$, and $\dim \mathrm{Supp}(\ci')=1$ for any subsheaf $\ci'$ of $\ci$,

\item[---]
\emph{simple}, if $\End_{\co_{X}}(\ci)=\bc$.

\end{enumerate}
\smallskip

\noindent Recall also that the \emph{degree} of a rank $1$ torsion-free coherent $\co_{X}$-module $\ci$ is defined as
$$
\deg(\ci)=\chi(\ci)-\chi(\co_{X}), 
$$
where $\chi(\ci)=h^{0}(X,\ci)-h^{1}(X,\ci)$ is the Euler-Poincar\'e characteristic of $\ci$. Since $\chi(\ci)$--and hence also $\deg(\ci)$--is constant under deformations, we can decompose $\overline{\pic}_{X/k}$ as
$$
\overline{\pic}_{X/k}=\bigsqcup_{n\in \bz} \overline{\pic}_{X/k}^{\, n},
$$
where $\overline{\pic}_{X/k}^{\,n}$ is the open and closed subscheme of $\overline{\pic}_{X/k}$ parametrizing the isomorphism class of rank $1$ torsion-free simple coherent $\co_{X}$-modules  of degree $n$.
Each connected component $\overline{\pic}_{X/k}^{\, n}$ satisfies the valuative criteria for properness; however, if $X$ is not irreducible, it may fail to be of finite type or separated.
In their work on the weighted fundamental lemma \cite{laumon lemme 1}, Chaudouard and Laumon introduced a $\xi$-stability condition on the Hitchin space. For the group $\gl_{d}$, by the Beauville-Narasimhan-Ramanan correspondence \cite{bnr}, this translates to a $\xi$-stability condition on the compactified Jacobian\footnote{While this notion was inspired by the work of Est\`eves \cite{esteves}, it is important to note that they are actually different.}.

\begin{defn}

Let $I$ be the set of irreducible components of $X$. A \emph{stability condition}  on $X$ is a tuple of real numbers $\xi=\{\xi_{X_{i}}\}_{i\in I}$, one for each irreducible component $X_{i}$ of $X$, such that $\sum_{i\in I}\xi_{X_{i}}=0$. The $\xi$-stability condition is said to be \emph{generic} if $\sum_{i\in J}\xi_{X_{i}}\notin \bz$ for any subset $J\subsetneq I$. 

\end{defn}

For a subset $J\subset I$, let $X_{J}$ be the subcurve of $X$ consisting of the irreducible components indexed by $J$. For a rank 1 torsion-free coherent sheaf $\ci$ on $X$, we denote by $\ci_{J}$ the maximal subsheaf of $\ci$ which is supported on $X_{J}$. It is clear that $\ci_{J}$ is torsion-free of rank $1$.

\begin{defn}

Let $\xi$ be a stability condition on $X$, let $\ci$ be a rank $1$ torsion-free coherent $\co_{X}$-module of degree $0$, it is said to be $\xi$-\emph{semistable} if  
$$
\deg(\ci_{J})+\sum_{j\in J } \xi_{j}\le 0, \quad \forall J\subsetneq I,
$$
it is said to be \emph{stable} if all the above inequalities are strict.

\end{defn}
  
If $\xi$ is generic, the notion of $\xi$-semistable coincides with that of $\xi$-stable as $\deg(\ci_{J})$ takes values in $\bz$. 
We denote by $\overline{J}{}_{X}^{\,\xi}$ the open subscheme of $\overline{\pic}^{\,0}_{X/k}$ parametrizing rank $1$ torsion-free simple $\co_{X}$-modules of degree $0$ which are $\xi$-stable. 
The construction of compactified Picard scheme extends naturally to a family, so is the notion of $\xi$-stability. 
Let $\ff: \overline{\pic}^{\,0}_{\kxx/\kbb}\to \kbb$ be the relative compactied Jacobian of the family $\ppi:\kxx\to \kbb$. Let $\xi$ be a generic stability condition on $X_{\gamma}$, by lemma-definition 5.3 of \cite{melo fine}, $\xi$ extends canonically to a generic stability condition on each geometric fiber of the miniversal deformation $\ppi:(\kxx,X_{\gamma})\to (\kbb,0)$, thereby inducing a notion of $\xi$-stability on $\overline{\pic}^{\,0}_{\kxx/\kbb}$.
Let $\overline{\kjj}^{\,\xi}$ be the $\xi$-stable locus of  $\overline{\pic}^{\,0}_{\kxx/\kbb}$, it is an open subscheme of the latter. Let $$\ff^{\xi}: \overline{\kjj}^{\,\xi}\to \kbb$$ be the restriction of $\ff: \overline{\pic}^{\,0}_{\kxx/\kbb}\to \kbb$ to $\overline{\kjj}^{\,\xi}$, we call it the $\xi$-stable relative compactified Jacobian of the family $\ppi$. 

\begin{thm}[Chaudouard-Laumon \cite{laumon lemme 1}, th\'eor\`eme 6.2.2]
\label{xi total}

Let $\xi$ be a generic stability condition on $X_{\gamma}$, then $\overline{\kjj}^{\,\xi}$ is smooth over $k$, and the morphism $\ff^{\xi}: \overline{\kjj}^{\,\xi}\to \kbb$ is proper and flat of dimension $p_{a}(X_{\gamma})$.

\end{thm}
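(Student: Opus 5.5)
The plan is to verify the three assertions (smoothness of the total space, properness, and flatness of relative dimension $p_{a}(X_{\gamma})$) by combining deformation theory of torsion-free sheaves on planar curves with the boundedness and separatedness supplied by genericity of $\xi$.

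\emph{Smoothness of $\overline{\kjj}^{\,\xi}$.} I would first show that the whole relative compactified Picard scheme $\overline{\pic}^{\,0}_{\kxx/\kbb}$ is smooth over $k$; the open piece $\overline{\kjj}^{\,\xi}$ then inherits this. Smoothness is local, so it suffices to treat the formal neighbourhood of a point $[\ci]$ over $b\in\kbb$. I would use the local-to-global comparison for deformations of pairs (curve, sheaf): such deformations are formally smooth over the product of the local deformation functors of the pairs $(\co_{X_{b},x},\ci_{x})$ at the singular points $x$. The obstruction to globalizing lies in $H^{2}$ of coherent sheaves on a curve, which vanishes, and simplicity of $\ci$ controls the automorphisms. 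For a planar germ $A$ and a rank $1$ torsion-free module $M$, the functor $\defm_{(A,M)}$ is smooth; this is the classical fact used by Fantechi--G\"ottsche--van Straten, where $M$ is presented as the cokernel of a square matrix with determinant $f$, and one deforms the matrix freely. Finally, miniversality of $\ppi$ gives that $\kbb\to\prod_{x}\defm_{X_{b},x}$ is formally smooth, so the total space is smooth over $k$.

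\emph{Properness and dimension.} For properness I would apply the valuative criterion over a DVR $R\to\kbb$. Existence of limits of rank $1$ torsion-free sheaves holds by Langton-type arguments or by the Altman--Kleiman properness of $\overline{\pic}^{\,0}$ over integral fibers. Genericity of $\xi$ then supplies uniqueness of the $\xi$-stable limit: one twists by components of the special fiber to restore the inequalities, and strict inequalities with non-integral $\sum\xi$ make the twist unique. Finite type follows from boundedness, since the degrees $\deg(\ci_{J})$ are bounded above and below by $\xi$.

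For flatness, each fiber $\overline{J}^{\,\xi}_{X_{b}}$ contains the generalized Jacobian as a dense open subset, of dimension $p_{a}(X_{b})=p_{a}(X_{\gamma})$. Density holds for planar singularities by Altman--Iarrobino--Kleiman and Rego, applied locally via the product decomposition. Hence the fibers are equidimensional of dimension $p_{a}$. Since the source is smooth, hence Cohen--Macaulay, the target $\kbb$ is smooth, and the fiber dimension is constant and equal to $\dim\overline{\kjj}^{\,\xi}-\dim\kbb$, miracle flatness gives that $\ff^{\xi}$ is flat.

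The main obstacle I expect is the separatedness/uniqueness step for reducible fibers. There one has to check carefully that $\xi$ stays generic on every fiber, which is exactly what lemma-definition 5.3 of \cite{melo fine} provides, and that the twisting argument works when components merge under specialization.
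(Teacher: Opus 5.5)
The paper gives no argument of its own for this statement. It quotes Chaudouard--Laumon \cite{laumon lemme 1}, where the result concerns the $\xi$-stable Hitchin fibration, and transports it to the family $\ff^{\xi}$ via the BNR dictionary.

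Your proposal instead proves the statement directly for the miniversal family $\ppi$, along the lines of Fantechi--G\"ottsche--van Straten, Esteves \cite{esteves} and Melo--Rapagnetta--Viviani \cite{melo fine}:
\begin{enumerate}
\item[(i)] smoothness of the total space, from the formal smoothness of $\defm_{(X_{b},\ci)}\to\prod_{x}\defm_{(A_{x},M_{x})}$, the smoothness of the local pair functors for planar germs, and versality of $\ppi$ at every point near $0$;
\item[(ii)] properness, from boundedness plus a Langton/Esteves valuative argument, where genericity of $\xi$ on every fibre (\cite{melo fine}, lemma-definition 5.3) gives uniqueness;
\item[(iii)] flatness, from fibrewise density of line bundles and miracle flatness.
\end{enumerate}
The citation buys brevity and keeps the result inside the Chaudouard--Laumon framework whose support theorem is used immediately afterwards. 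Your route buys an argument about the family that actually appears in the statement. The miniversal deformation of $X_{\gamma}$ is not a Hitchin family, so the citation tacitly needs a local comparison that your proof avoids. Your route also makes explicit where planarity enters (smoothness of pair deformations, density of free modules, unobstructedness and hence smoothness of $\kbb$) and where genericity of $\xi$ enters.

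A few points should be stated more precisely when you write it out.
\begin{enumerate}
\item[(a)] On a reducible fibre $X_{b}$, the line-bundle locus of $\overline{J}^{\,\xi}_{X_{b}}$ is a finite disjoint union of $\jac_{X_{b}}$-torsors, one for each $\xi$-stable multidegree. It is not a single copy of $\jac_{X_{b}}$.
\item[(b)] Altman--Iarrobino--Kleiman \cite{aik} and Rego \cite{rego} treat integral curves. For a reducible planar germ, first place the germ on an integral curve such as $C_{\gamma}$. Their results plus Laumon's uniformization then give density of free modules in the local moduli, and you glue afterwards.
\item[(c)] Miracle flatness needs the total space to be of pure dimension $\dim\kbb+p_{a}(X_{\gamma})$. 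This holds because the line-bundle locus is smooth over $\kbb$ of relative dimension $p_{a}$ (since $H^{2}(X_{b},\co)=0$), and by fibrewise density it is dense in $\overline{\kjj}^{\,\xi}$.
\item[(d)] In the valuative criterion, components of the special fibre need not be Cartier on the total space over the DVR. So ``twisting'' should be read as Esteves' elementary modifications $\ci\mapsto\ker(\ci\to\ci_{Y})$, where $\ci_{Y}$ is the restriction to a subcurve $Y$ modulo torsion.
\end{enumerate}
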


By Deligne's Weil-II \cite{weil2}, the complex $R\ff^{\xi}_{*}\ql$ is pure of weight $0$. 
By the decomposition theorem of Beilinson, Bernstein, Deligne and Gabber \cite{bbdg}, corollaire 5.4.6, we have
$$
R\ff_{*}^{\xi}\ql=\bigoplus_{i} {}^{p}\ch^{i}(R\ff_{*}^{\xi}\ql)[-i].
$$
It is a difficult but important problem to determine the (shifted) perverse sheaves ${}^{p}\ch^{i}(R\ff_{*}^{\xi}\ql)$.
Let $\Delta\subset \kbb$ be the discriminant of the family $\ppi:\kxx\to \kbb$, let $\kbb^{\sm}=\kbb-\Delta$, then the restriction of $\ppi$--and hence also of $\ff^{\xi}$--to the inverse image of $\kbb^{\sm}$ is smooth, we denote them by $\ppi^{\sm}$ and $\ff^{\xi, \sm}$ respectively.  Let $\jj:\kbb^{\sm}\to \kbb$ be the inclusion.

\begin{thm}[Chaudouard-Laumon \cite{laumon lemme 2}, th\'eor\`eme 10.5.1] \label{cj support}

Let $\xi$ be a generic stability condition on $X_{\gamma}$, then 
$$
R\ff_{*}^{\xi}\ql=\bigoplus_{i=0}^{2p_{a}(X_{\gamma})} \jj_{!*}R^{i}\ff^{\xi,\sm}_{*}\ql[-i].
$$
In particular, we have
\begin{equation}\label{cj fiber}
H^{i}(\overline{J}{}^{\,\xi}_{X_{\gamma}}, \ql)=\bigoplus_{i'=0}^{i} \ch^{i-i'}(\jj_{!*}R^{i'}\ff^{\xi,\sm}_{*}\ql)_{0}, \quad i=0, \cdots, 2p_{a}(X_{\gamma}).
\end{equation}

\end{thm}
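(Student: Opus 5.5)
The statement is cited from Chaudouard--Laumon, so the plan below reconstructs their argument: Ngô's support theorem adapted to the $\xi$-stable setting, plus a local freeness argument for the vanishing of the remaining supports. The second equation (\ref{cj fiber}) is formal once the first is known. Taking the stalk at $0$ of both sides, proper base change identifies $(R\ff^{\xi}_{*}\ql)_{0}$ with $R\Gamma(\overline{J}{}^{\,\xi}_{X_{\gamma}},\ql)$. Taking $\ch^{i}$ of the direct sum then gives exactly the stated formula. So the whole work is the first identity: no perverse constituent ${}^{p}\ch^{i}(R\ff^{\xi}_{*}\ql)$ has a support strictly smaller than $\kbb$.

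\textbf{Step 1: set up the support theorem.} By Theorem \ref{xi total} and the decomposition theorem, each ${}^{p}\ch^{i}(R\ff^{\xi}_{*}\ql)$ is semisimple. We write it as a sum of intersection complexes $j_{Z,!*}L_{Z}$ over irreducible closed $Z\subset\kbb$. The relative Picard group scheme $P=\pic^{0}_{\kxx/\kbb}\to\kbb$ acts on $\overline{\kjj}^{\,\xi}$, and the stabilizers are affine. Over a point $b$, the affine part of $P_{b}$ has dimension $\delta(b)$, the total $\delta$-invariant of $\kxx_{b}$. This gives a $\delta$-regular abelian fibration in Ngô's sense. Ngô's support inequality then says that any support $Z$ with generic $\delta$-value $\delta_{Z}$ satisfies $\operatorname{codim} Z\le\delta_{Z}$. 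Equality holds only if $L_{Z}$ occurs in the top cohomology of the free part.

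\textbf{Step 2: the reverse inequality from the deformation.} The miniversal deformation of a planar singularity satisfies $\operatorname{codim}\{\delta\ge\delta_{0}\}\ge\delta_{0}$, with equality along strata whose generic member is nodal (Teissier / Diaz--Harris). So equality forces $Z$ to be the closure of a stratum where $\kxx_{b}$ has exactly $\delta_{Z}$ nodes. On such a stratum, $L_{Z}$ is governed by $\pi_{0}$ of the affine part. A node separating two components of $\kxx_{b}$ contributes a $\bbg_{m}$ in the affine part and a nontrivial combinatorial piece. Genericity of $\xi$ makes $\overline{J}{}^{\,\xi}_{\kxx_{b}}$ a fine, connected moduli space. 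I would use this to show that the top cohomology $\Lambda^{\rm top}$ of the free part is already the restriction of an object with full support $\kbb$.

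\textbf{Step 3: rule out the proper nodal supports (main obstacle).} This is the hard point. For reducible curves, the nodal strata on which components separate, meaning the partitions $I_{\bullet}$ of Remark \ref{partition levi}, are exactly where Ngô's argument produces potential new supports. These are the endoscopic-type supports in the full Picard family. I would try Chaudouard--Laumon's method: compute the stalks at a general point $b$ of $Z$ directly. Near such a point, the family is locally a product of the nodal smoothing $xy=t$, repeated $\delta_{Z}$ times, with a smooth factor. The $\xi$-stable compactified Jacobian is then locally a product of the standard $\xi$-stable local models, namely $A_{1}$-type chains, over the disc. For these models one checks by hand that $Rf_{*}$ is the intermediate extension from the smooth locus. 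The point is that generic $\xi$ picks exactly one chamber, which kills the extra $H^{2}$-type classes that would otherwise give skyscraper supports. Granting this local computation, no proper $Z$ appears, and each constituent is $\jj_{!*}$ of its restriction to $\kbb^{\sm}$. On $\kbb^{\sm}$ the map $\ff^{\xi}$ is smooth and proper, so the restriction is $R^{i}\ff^{\xi,\sm}_{*}\ql[\dim\kbb]$. Reassembling the shifts yields the stated formula.
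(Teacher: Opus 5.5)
The paper does not reprove this result. It quotes Chaudouard--Laumon, and in \S\ref{micro xi-stable} (the proposition on $(\ff')^{\xi}$) it indicates that the argument is Ng\^o's support theorem for an abelian fibration combined with the Severi inequality. Your outline uses that framework, but it has a genuine gap, and the gap starts in Step~1.

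The group acting on $\overline{\kjj}^{\,\xi}$ is the multidegree-zero Jacobian, i.e. the fibrewise identity component of $\pic^{0}_{\kxx/\kbb}$. A line bundle of nonzero multidegree shifts the integers $\deg(\ci_{J})$ and so does not preserve $\xi$-stability. The affine part of this group at $b$ has dimension $p_{a}(X_{\gamma})-p_{g}(\kxx_{b})=\delta_{b}-r_{b}+1$, where $r_{b}$ is the number of irreducible components of $\kxx_{b}$. It is not $\delta_{b}$. For a nodal fibre the affine part is a torus whose rank is the first Betti number of the dual graph, so a node that merely joins two components contributes nothing.

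With the correct value, Ng\^o's inequality reads $\mathrm{codim}\,Z\le\delta_{Z}-r_{Z}+1$. The Severi inequality gives $\mathrm{codim}\,Z\ge\delta_{Z}$, since $Z$ lies in the closed locus $\{\delta\ge\delta_{Z}\}$. Together these force $r_{Z}=1$ and $\mathrm{codim}\,Z=\delta_{Z}$. In particular, the strata $\kbb_{I_{\bullet}}$ you single out are not an equality case of Ng\^o's inequality but a strict violation of it. They are excluded by a pure dimension count; this is the same inequality $\dim V>\delta(\kxx_{x})+1-r(\kxx_{x})$ the paper uses in the proof of Theorem~\ref{xi morse group}.

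In the remaining case, the generic fibre over $Z$ is the compactified Jacobian of an integral curve with planar singularities, hence irreducible (Altman--Iarrobino--Kleiman, Rego). Ng\^o's equality case then forces the constituent supported on $Z$ to be a direct factor of $R^{2d}\ff^{\xi}_{*}\ql|_{U}$, with $d=p_{a}(X_{\gamma})$. That sheaf has rank one and is already exhausted by $\jj_{!*}R^{2d}\ff^{\xi,\sm}_{*}\ql=\ql(-d)$. Hence $Z=\kbb$.

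Your Step~3, which you explicitly grant, would not close the gap as described. Over a point of $\kbb^{\circ}_{I_{\bullet}}$ the fibre is a fine compactified Jacobian of a reducible nodal curve. It has several irreducible components, indexed by spanning trees of the dual graph, and the stalk of $R\ff^{\xi}_{*}\ql$ there is the global cohomology of this fibre. The total space is analytically a product of models $xy=t$ near each point of the fibre, but that determines neither this global cohomology nor the monodromy data entering $\jj_{!*}$. So there is no product of ``$A_{1}$-chains'' to check by hand.

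Nor does genericity of $\xi$ kill extra $H^{2}$ classes. Take $\gamma=\diag(\gamma_{1},\gamma_{2})\in\ggl_{2}(F)$ with $\val(\gamma_{1}-\gamma_{2})=2$. The fibre over $\kbb^{\circ}_{I_{\bullet}}$ is then a cycle of two $\bp^{1}$'s, whose $H^{2}$ has rank two. The extra class is accounted for by $\ch^{1}(\jj_{!*}R^{1}\ff^{\xi,\sm}_{*}\ql)$ at that point, not killed.

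Replacing $\delta_{b}$ by $\delta_{b}-r_{b}+1$ in Step~1 makes Step~3 unnecessary.
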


At the vicinity of $0$, the sheaf $R^{i}\ff^{\xi,\sm}_{*}\ql$ can be described explicitly.  
Let $\bar{\xi}_{0}$ be a geometric generic point of $\kbb_{\{0\}}$, we have the Milnor fiber $\kxx_{\{0\}}\times_{\kbb_{\{0\}}}\bar{\xi}_{0}$ for the singularity $X_{\gamma}^{\circ}$. Let $E=H^{1}(\kxx_{\{0\}}\times_{\kbb_{\{0\}}}\bar{\xi}_{0}, \ql)$, it is equipped with an alternating bilinear form $\pair{\, ,}: E\otimes E\to \ql$ coming from the intersection form on the homology of the Milnor fiber. Let $E^{\perp}$ be the radical of the pairing $\pair{\, ,}$, which then induces a non-degenerate alternating bilinear form on $E/E^{\perp}$, we denote it by $\pair{\, ,}_{\rm red}$.
The local monodromy group $\pi_{1}(\kbb_{\{0\}}-\Delta, \bar{\xi}_{0})$ acts on $E$ and preserves $E^{\perp}$.
They correspond to local systems $\ce$ and $\ce^{\perp}$ respectively on a sufficiently small open neighbourhood $U$ of $0$, the local system $\ce$ is also called the sheaf of vanishing cycles.  
It is a fact that $E/E^{\perp}$ is an irreducible $\pi_{1}(\kbb_{\{0\}}-\Delta, \bar{\xi})$-module \cite{waj}, so the local system $\ce/\ce^{\perp}$ is irreducible.

\begin{prop}\label{quotient globalized}

We have $R^{1}\ff^{\xi,\sm}_{*}\ql=R^{1}\ppi^{\sm}_{*}\ql$ and $R^{1}\ppi^{\sm}_{*}\ql|_{U-\Delta}=\ce/\ce^{\perp}$, hence $R^{i}\ff^{\xi,\sm}_{*}\ql$ is a globalization of $\BigWedge^{i}(\ce/\ce^{\perp})$ for all $i$.

\end{prop}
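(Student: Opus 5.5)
Over $\kbb^{\sm}$ the fibers $X_{b}$ are smooth projective curves of genus $p_{a}(X_{\gamma})$. I will argue first at a single point $b\in \kbb^{\sm}$ and then in families.

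The first step is to identify the fibers. For a smooth connected curve every rank $1$ torsion-free sheaf is a line bundle. A line bundle is $\xi$-stable for the trivial reason that there is no proper subcurve (the index set $I$ is a singleton), and it is simple. So $\overline{J}{}^{\,\xi}_{X_{b}}=\pic^{0}_{X_{b}}=\jac(X_{b})$. Relatively over $\kbb^{\sm}$, the scheme $\overline{\kjj}^{\,\xi}\times_{\kbb}\kbb^{\sm}$ is the relative Jacobian $\mathrm{Pic}^{0}_{\kxx^{\sm}/\kbb^{\sm}}$, an abelian scheme. I need $X_{b}$ to be connected; this holds because $\ppi$ is flat and proper with connected special fiber $X_{\gamma}$, so $\ppi_{*}\co=\co$ by upper semicontinuity of $h^{0}$ near $0$ (shrinking $\kbb$ if needed).

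The second step is the standard computation for abelian schemes. For an abelian scheme $\mathcal{A}\to S$ we have $R^{i}\ql=\BigWedge^{i}R^{1}\ql$, and for a smooth proper curve with Jacobian there is a canonical isomorphism $R^{1}(\mathrm{Pic}^{0})_{*}\ql\cong R^{1}\ppi^{\sm}_{*}\ql$. To see this, use the Abel--Jacobi map in families after an étale local choice of section; the identification $H^{1}(\jac(X_{b}))\cong H^{1}(X_{b})$ does not depend on the base point, so the local isomorphisms glue. Alternatively, one can use the Kummer sequence: $H^{1}(\jac,\mu_{\ell^{m}})$ is dual to $\jac[\ell^{m}]$, and $H^{1}(X_{b},\mu_{\ell^{m}})=\pic(X_{b})[\ell^{m}]$, which combined with Poincaré/Weil self-duality gives the isomorphism up to a Tate twist. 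That twist can be ignored here since $k=\bc$. Either argument yields $R^{1}\ff^{\xi,\sm}_{*}\ql=R^{1}\ppi^{\sm}_{*}\ql$ and $R^{i}\ff^{\xi,\sm}_{*}\ql=\BigWedge^{i}R^{1}\ppi^{\sm}_{*}\ql$.

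The third step is the local comparison $R^{1}\ppi^{\sm}_{*}\ql|_{U-\Delta}=\ce/\ce^{\perp}$. For $b\in U-\Delta$, decompose $X_{b}=\kxx_{\{0\}}\times\bar\xi\,\cup\,(\text{the rest})$, where the rest is a deformation of $X_{\gamma}\setminus X_{\gamma}^{\circ}$. Since $X_{\gamma}$ is smooth away from $0$, the family is topologically trivial there, and the rest is a union of $r$ disks, since the normalization is $\bigsqcup\bp^{1}$ with $r$ branches at $0$. Thus $X_{b}$ is the Milnor fiber $M_{b}$ capped off by $r$ disks along its $r$ boundary circles. The Mayer--Vietoris sequence gives
$$0\to H^{1}(X_{b})\to H^{1}(M_{b})\to \textstyle\bigoplus_{j=1}^{r}H^{1}(S^{1})\to \cdots,$$
where the first map is injective because $H^{0}$ is handled by connectivity and the disks contribute nothing to $H^{1}$. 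The kernel of restriction to the boundary is $H^{1}_{c}(M_{b})\to H^{1}(M_{b})$ image, which is exactly the radical of the intersection form dualised. Using Poincaré duality on $M_{b}$, one has $H^{1}(M_{b})=E$, and the image of $H^{1}_{c}(M_{b})$ in $E$ equals $E/E^{\perp}$ as a quotient, where $E^{\perp}$ is spanned by the boundary classes. Hence $H^{1}(X_{b})\cong E/E^{\perp}$. A dimension check confirms this: $\dim E=2\delta_{\gamma}-r+1$ (Milnor's formula $\mu=2\delta-r+1$), $\dim E^{\perp}=r-1$, and the difference is $2p_{a}(X_{\gamma})$. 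These identifications are natural in $b$, so they are compatible with the monodromy of $\pi_{1}(U-\Delta)$, which gives the isomorphism of local systems. Taking exterior powers then finishes the proposition.

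The main obstacle is the third step: making precise the topological decomposition of the nearby fiber into the Milnor fiber glued with $r$ disks, uniformly over $U-\Delta$, and correctly matching the sub/quotient conventions between $H^{1}$, compactly supported cohomology and $\ce^{\perp}$. I would handle this with Ehresmann's theorem applied to the complement of a small ball around $0$, which is a smooth proper family with boundary.
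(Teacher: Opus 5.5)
Your proposal is correct and is essentially the paper's argument. The smooth nearby fiber is the Milnor fiber with a disk glued onto each of its $r$ boundary circles $\partial_i$, so its $H^{1}$ is $E$ modulo the span of the boundary classes, i.e.\ $E/E^{\perp}$; your Mayer--Vietoris computation, the dimension check via $\mu=2\delta_{\gamma}-r+1$, and the Abel--Jacobi identification of $\overline{\kjj}^{\,\xi}|_{\kbb^{\sm}}$ with the relative Jacobian make explicit what the paper leaves implicit.

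One small slip: it is the kernel, not the image, of $H^{1}_{c}(M_{b})\to H^{1}(M_{b})$ that is the radical spanned by the boundary classes. The image is $H^{1}_{c}(M_{b})/\mathrm{rad}$, which is what you actually use in the next sentence.
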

  
\begin{proof}

For $i=1,\cdots, r$, let $\partial_{i}$ be the boundary of the branch $X_{i}^{\circ}$, it is homeomorphic to the circle $\bs^{1}$. As the Milnor fibration preserves the boundary of the fibers, they correspond bijectively to the boundaries of the Milnor fiber $\kxx_{\{0\}}\times_{\kbb_{\{0\}}}\bar{\xi}_{0}$, and we retain the same notations for them. Then $E^{\perp}$ is generated by the associated cohomology classes $ [\partial_{1}], \cdots, [\partial_{r}]$. 
Topologically, our globalization of $X_{\gamma}^{\circ}$ to the spectral curve $X_{\gamma}$ is obtained by attaching a closed unit disk to each of the boundary $\partial_{i}$, hence the fiber $\kxx_{\bar{\xi}_{0}}$ is obtained topologically from the Milnor fiber by the same operations, so 
$$
H^{1}(\kxx_{\bar{\xi}_{0}},\ql)\cong H^{1}\big(\kxx_{\{0\}}\times_{\kbb_{\{0\}}}\bar{\xi}_{0},\ql\big)/\big\langle [\partial_{1}], \cdots, [\partial_{r}]\big\rangle=E/E^{\perp}. 
$$
This implies $R^{1}\ppi^{\sm}_{*}\ql|_{U-\Delta}=\ce/\ce^{\perp}$,  and the remaining assertions follow from this identification.
\end{proof}

\subsection{The relative punctual Hilbert scheme of the deformation}

For $n\in \bn$, let $\ppi^{[n]}:\kxx^{[n]}\to \kbb$ be the relative punctual Hilbert scheme of $\ppi$, where $\kxx^{[n]}$ represents the functor that assigns to each $\kbb$-scheme $S$ the set of closed subschemes of $\kxx\times_{\kbb}S$ that are flat and finite of degree $n$ over $S$. 
It is known that $\ppi^{[n]}$  is a proper morphism.

\begin{prop}[Migliorini-Shende-Viviani \cite{miglio 2}, theorem 4.16]

$\kxx^{[n]}$ is smooth over $k$.

\end{prop}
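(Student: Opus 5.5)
The plan is to follow the strategy of Migliorini–Shende–Viviani and, earlier, Shende for the irreducible case. Smoothness of $\kxx^{[n]}$ over $k$ is a local statement, so it suffices to check it at every closed point. A point of $\kxx^{[n]}$ is a pair $(b, Z)$, where $b\in\kbb$ and $Z\subset \kxx_{b}$ is a length-$n$ subscheme. Write $Z=\bigsqcup_{p} Z_{p}$ as the disjoint union of its parts supported at the distinct points $p$. The points $p$ lying in the smooth locus of $\kxx_{b}$ contribute smooth factors, since there the relative Hilbert scheme is locally the symmetric product of a smooth family of curves. Near a singular point the local structure is governed by the germ of the family $\ppi$ at $p$. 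I would therefore reduce, via the standard étale-local product decomposition of Hilbert schemes of points with disjoint supports, to the case where $Z$ is supported at a single singular point $p$ of a fiber.

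At such a point the key input is versality. Because $\ppi$ is a miniversal deformation of $X_{\gamma}$, the induced family of germs at any singular point $p$ of a nearby fiber $\kxx_{b}$ is versal for the planar germ $(\kxx_{b},p)$. Indeed, openness of versality, applied after restricting to a small neighbourhood of $0$, gives that the map from $\kbb$ to the product of the local deformation functors of the singular points of $\kxx_{b}$ is smooth. Moreover, $\ppi$ is locally planar: $\kxx$ embeds étale-locally into $\ba^{2}\times\kbb$ as a hypersurface $\{F=0\}$. Hence the germ of $\kxx^{[n]}$ at $(b,Z)$ is smooth over the germ of the relative Hilbert scheme of the semiuniversal deformation $\{f+\sum t_{i}g_{i}=0\}\subset \ba^{2}\times \ba^{\tau}$ of the plane germ $f$, where the $g_{i}$ span $k[\![x,y]\!]/(f,f_{x},f_{y})$. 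It therefore suffices to prove smoothness of that Hilbert scheme at $Z$.

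For the versal plane family, I would do a tangent-space computation. The relative Hilbert scheme $\mathcal{H}$ sits inside $\mathrm{Hilb}^{n}(\ba^{2})\times\ba^{\tau}$, which is smooth of dimension $2n+\tau$ by Fogarty. It is the locus of pairs $(Z,t)$ with $f_{t}\in I_{Z}$, namely the zero locus of the section $f_{t}$ of the rank-$n$ tautological bundle $\co^{[n]}$. This gives at least $n$ equations, so $\mathcal{H}$ is smooth of dimension $n+\tau$ as soon as the differential of the section is surjective onto $\co_{Z}=k[x,y]/I_{Z}$ at $(Z,0)$. The differential in the $t$-directions sends $\partial_{t_{i}}$ to $g_{i}\bmod I_{Z}$. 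In the $\mathrm{Hilb}$ directions, a tangent vector $\phi\in\Hom(I_{Z},\co_{Z})$ is sent to $-\phi(f)$. So the condition to verify is
$$
\co_{Z} = \mathrm{span}\{g_{i}\} + \{\phi(f)\mid \phi\in \Hom(I_{Z},\co_{Z})\} \pmod{I_{Z}}.
$$
Since the $g_{i}$ together with the Jacobian ideal span $k[\![x,y]\!]/(f)$, it suffices to show that $f_{x}$, $f_{y}$ and $f\cdot\co$ are covered by the second term. Derivations $\partial_{x},\partial_{y}$ restricted to $I_{Z}$ give homomorphisms $I_{Z}\to\co_{Z}$ with $\phi(f)=f_{x}$ and $f_{y}$ respectively. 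This is well defined because derivations send $I_{Z}$ into $\co/I_{Z}$ as $\co$-linear maps modulo $I_{Z}$: indeed $\partial(ah)=a\partial h + h\partial a\equiv a\partial h$ mod $I_{Z}$. Multiplication maps $I_{Z}\to I_{Z}\to \co_{Z}$ handle multiples of $f$ only trivially, but $f\in I_{Z}$ and so $f\equiv 0$ anyway. Surjectivity follows.

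The main obstacle is the globalization step, not the local computation. One has to justify that the local germs of $\ppi$ at singular points of nearby fibers are versal, with the multi-point, non-irreducible situation handled simultaneously. One also needs the algebraization of $\ppi$ to be locally planar everywhere on $\kxx$, and not only near $0$. I would handle this by shrinking $\kbb$ to a neighbourhood of $0$, using openness of versality in families (as in \cite{melo fourier 1}, \S3.1) and the planarity of all singularities appearing. Properness of $\ppi^{[n]}$ then lets one propagate the conclusion, since every point of $\kxx^{[n]}$ over a neighbourhood of $0$ is treated.
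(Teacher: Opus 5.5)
Your proof is correct, but it takes a different route from the paper. The paper gives no argument of its own: it imports Theorem 4.16 of Migliorini--Shende--Viviani.

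You instead reconstruct the standard proof in two steps:
\begin{enumerate}[label=$(\arabic*)$, itemsep=1pt]
\item The completed local ring of $\kxx^{[n]}$ at $(b,Z)$ depends only on the formal germs of the family along $\mathrm{Supp}(Z)$. Joint versality then reduces you to the semiuniversal deformation $\{f+\sum t_i g_i=0\}$ of a plane germ.
\item You check that the tautological section $f_t \bmod I_Z$ of $\co^{[n]}$ on $\mathrm{Hilb}^n(\ba^2)\times\ba^\tau$ is transverse to zero at $(Z,0)$. The $g_i$ cover the Tjurina algebra, and the restricted vector fields $a\partial_x, a\partial_y$ cover the Jacobian ideal $\co f_x+\co f_y$.
\end{enumerate}
In step $(2)$ you should say explicitly that $\{\phi(f)\}$ is an $\co$-submodule of $\co_Z$, which is why $a\partial_x$ and $a\partial_y$ are needed rather than just $\partial_x$ and $\partial_y$. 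This is a one-line fix, not a gap.

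What your route buys is a self-contained, elementary proof that rests only on Fogarty's theorem. You can also drop openness of versality altogether. Miniversality at $0$ already gives smoothness at every point of $(\ppi^{[n]})^{-1}(0)$. The singular locus of $\kxx^{[n]}$ is closed, so by properness of $\ppi^{[n]}$ its image is a closed subset of $\kbb$ missing $0$. Hence $\kxx^{[n]}$ is smooth over a neighbourhood of $0$. That is all the paper uses; the stated result implicitly allows shrinking $\kbb$, since an algebraization need not be versal far from $0$.

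What the citation buys is generality and consistency. MSV's result applies to any locally versal family of reduced locally planar curves, including nested Hilbert schemes. It is also stated in the same framework as the MSV support theorem invoked immediately afterwards, so that theorem's hypotheses are matched without further checking.
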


By Deligne's Weil-II \cite{weil2}, the complex $R\ppi^{[n]}_{*}\ql$ is pure of weight $0$. 
By the decomposition theorem of Beilinson, Bernstein, Deligne and Gabber \cite{bbdg}, corollaire 5.4.6, we have
$$
R\ppi^{[n]}_{*}\ql=\bigoplus_{i} {}^{p}\ch^{i}(R\ppi^{[n]}_{*}\ql)[-i].
$$
But unlike ${}^{p}\ch^{i}(R\ff^{\xi}_{*}\ql)$, the (shifted) perverse sheave ${}^{p}\ch^{i}(R\ppi^{[n]}_{*}\ql)$ may have extra supports.
Recall that for each partition $I_{\bullet}\vdash\{1,\cdots, r\}$ we have the subfamily $\ppi_{I_{\bullet}}: \kxx_{I_{\bullet}}\to \kbb_{I_{\bullet}}$ of the family $\ppi$. 
By construction, in the subfamily $\ppi_{I_{\bullet}}$ each of the subcurve $X_{I_{j}}$ can be deformed independently of the others, and the number of intersections between $X_{I_{j}}$ and $X_{I_{j'}}$ remains constant for all $j\neq j'$. Hence there exists a partial resolution of $\ppi_{I_{\bullet}}$:
$$
\begin{tikzcd}
\kxx_{I_{\bullet}}^{\flat}\arrow[rd, "\pvarpi_{I_{\bullet}}"']\arrow[rr]&& \kxx_{I_{\bullet}}\arrow[ld, "\ppi_{I_{\bullet}}"]
\\
&\kbb_{I_{\bullet}}&
\end{tikzcd}
$$ 
which resolves the intersections between $X_{I_{j}}$ and $X_{I_{j'}}$ for all $j\neq j'$.
Let $\delta_{I_{\bullet}}$ be the total number of nodes being resolved. 
Let $\Delta_{I_{\bullet}}$ be the discriminant for $\pvarpi_{I_{\bullet}}$, let $\kbb_{I_{\bullet}}^{\sm}=\kbb_{I_{\bullet}}-\Delta_{I_{\bullet}}$, let $\jj_{I_{\bullet}}: \kbb_{I_{\bullet}}^{\sm}\to \kbb_{I_{\bullet}}$ be the inclusion.   
Let $\pvarpi_{I_{\bullet}}^{[n]}: \kxx_{I_{\bullet}}^{\flat, [n]}\to \kbb_{I_{\bullet}}$ be the relative punctual Hilbert scheme of $\ppi_{I_{\bullet}}$, and let $\pvarpi_{I_{\bullet}}^{[n], \sm}$ be its restriction to the inverse image of $\kbb_{I_{\bullet}}^{\sm}$.

\begin{thm}[Migliorini-Shende-Viviani \cite{miglio 2}, theorem 5.10] \label{hilbert support}

We have the isomorphism
$$
R\ppi^{[n]}_{*}\ql\cong \bigoplus_{I_{\bullet}\vdash \{1,\cdots, r\}} 
\bigoplus_{i}
\Big\{(\jj_{I_{\bullet}})_{!*}R^{i}\pvarpi_{I_{\bullet}, *}^{[n-\delta_{I_{\bullet}}], \sm}\ql [-2\delta_{I_{\bullet}}](-\delta_{I_{\bullet}})\Big\}[-i].
$$
In particular, for $i=0, \cdots, 2n$, we have
\begin{equation}\label{hilbert fiber}
H^{i}(X_{\gamma}^{[n]}, \ql)
\cong 
\bigoplus_{I_{\bullet}\vdash \{1,\cdots, r\}} 
\bigoplus_{i'=0}^{i-2\delta_{I_{\bullet}}}
\ch^{i-2\delta_{I_{\bullet}}-i'}\Big\{(\jj_{I_{\bullet}})_{!*}R^{i'}\pvarpi_{I_{\bullet},*}^{[n-\delta_{I_{\bullet}}], \sm}\ql \Big\}_{0}\otimes \ql(-\delta_{I_{\bullet}}).
\end{equation}

\end{thm}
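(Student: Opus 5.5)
The plan is to run the Ngô--style support argument on the decomposition of $R\ppi^{[n]}_{*}\ql$, along the lines of Migliorini--Shende--Viviani. There are three steps: show that the candidate supports are exactly the closures $\kbb_{I_{\bullet}}$, identify the summand on each support by restricting to a generic point of that stratum, and then take stalks at $0$.

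\textbf{Step 1: the candidate supports.} Since $\kxx^{[n]}$ is smooth and $\ppi^{[n]}$ is proper, the decomposition theorem writes $R\ppi^{[n]}_{*}\ql$ as a direct sum of shifted intersection complexes $\mathrm{IC}(Z,L)$. Here $Z\subset\kbb$ runs over irreducible closed subsets. We stratify $\kbb$ by the following topological type of the fibre: the partition $I_{\bullet}$ recording which branches of $X_{\gamma}$ become disconnected after smoothing the non-nodal singularities, together with the $\delta$-invariant. Two inputs then constrain the supports.
\begin{enumerate}[label=(\alph*)]
\item \emph{Codimension estimate.} By versality (Proposition \ref{strata local}), the locus where the fibre has total $\delta$-invariant $\delta$ and $c$ connected components after partial normalization has codimension at least $\delta$ in $\kbb$. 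This is the usual $\delta$-regularity.
\item \emph{Freeness.} On the compactified-Jacobian part, the cohomology of the Hilbert scheme is free over the cohomology of the Jacobian of the normalization.
\end{enumerate}
Combined with the perverse amplitude bound coming from the fibre dimension $n$, these show that a support must be a stratum along which the fibres acquire extra abelian-variety-free behaviour. The only such strata are the $\kbb_{I_{\bullet}}$, where components separate, exactly as in the support theorem of Chaudouard--Laumon (Theorem \ref{cj support}) for $\ff^{\xi}$. Note that the Hilbert scheme of points has no stability condition, so the components which would be discarded for $\overline{\kjj}^{\,\xi}$ now genuinely contribute.

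\textbf{Step 2: identifying the summand on each support.} Fix $I_{\bullet}$ and take a general point $b\in\kbb_{I_{\bullet}}$. The fibre $\kxx_{b}$ is a union of smooth curves $\kxx_{b,I_{j}}$ meeting transversally in exactly $\delta_{I_{\bullet}}$ nodes. We stratify $\kxx_{b}^{[n]}$ according to how many points sit at each node, and use the partial normalization $\pvarpi_{I_{\bullet}}$. This should give, locally over $\kbb_{I_{\bullet}}$ near $b$,
$$
R\ppi^{[n]}_{*}\ql\big|_{\kbb_{I_{\bullet}}}\supset R\pvarpi^{[n-\delta_{I_{\bullet}}]}_{I_{\bullet},*}\ql[-2\delta_{I_{\bullet}}](-\delta_{I_{\bullet}}),
$$
where the Tate twist and shift come from the Hilbert scheme of a node: each node contributes a copy of $\bp^{1}$ worth of length-two subschemes, which accounts for the replacement $n\mapsto n-\delta_{I_{\bullet}}$. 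Comparing with the decomposition theorem on the smooth locus of $\pvarpi_{I_{\bullet}}$, we read off the local systems $R^{i}\pvarpi^{[n-\delta_{I_{\bullet}}],\sm}_{I_{\bullet},*}\ql$ and extend them by $(\jj_{I_{\bullet}})_{!*}$. The summands for coarser partitions are accounted for inductively on the poset of partitions. As a global check, both sides should have the same generating function in $n$, via the Macdonald-type formula relating Hilbert schemes and compactified Jacobians.

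\textbf{Step 3: stalks at $0$.} Formula (\ref{hilbert fiber}) follows by proper base change: the left side is the stalk of $R\ppi^{[n]}_{*}\ql$ at $0$, which is $H^{*}(X_{\gamma}^{[n]},\ql)$. On the right side we need that each $\kbb_{I_{\bullet}}$ is unibranch at $0$. This is Remark \ref{partition levi}(1): the normalization has the single point $0_{I_{\bullet}}$ over $0$. Hence the stalk of the intersection complex at $0$ equals the stalk at $0_{I_{\bullet}}$ of the intersection complex on the smooth normalization, and no multiplicity appears.

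\textbf{Main obstacle.} I expect Step 1 to be the hardest, namely excluding any support other than the $\kbb_{I_{\bullet}}$. The approach I would try first is to deduce it from Theorem \ref{cj support} applied to each subcurve $X_{I_{j}}$, passing through the Abel--Jacobi map $\kxx^{[n]}\to\overline{\pic}$, which is a projective bundle for $n\gg0$. Since the subfamily over $\kbb_{I_{\bullet}}$ is formally smooth over $\prod_{j}\defm^{\rm top}_{X^{\circ}_{I_{j}}}$ (Proposition \ref{strata local}), the support statements reduce to those of the factors.
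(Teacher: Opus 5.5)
You do not need Steps 1--2 to match the paper: it does not prove this statement. The global isomorphism is quoted from Migliorini--Shende--Viviani, and (\ref{hilbert fiber}) is just its stalk at $0$. Proper base change handles the left side. On the right, $(\jj_{I_\bullet})_{!*}$ of a local system in degree $0$ has no cohomology sheaves in negative degrees, which gives $i'\le i-2\delta_{I_\bullet}$. Your Step 3 does this; the unibranchness of $\kbb_{I_\bullet}$ at $0$ is not needed here, and the paper only uses it later, for (\ref{levi simplified}).

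Your attempt to reprove the cited theorem has a genuine gap at Step 1: nothing there actually excludes supports.
\begin{itemize}
\item The $\delta$-codimension estimate is only a \emph{lower} bound $\mathrm{codim}\,Z\ge\delta_Z$. In Ng\^o's argument the matching upper bound $\mathrm{codim}\,Z\le\delta_Z$ comes from the relative Picard group acting on the fibres, and from freeness over the Tate module of its abelian part.
\item No group acts on $X^{[n]}$, so your input (b) has no content. The relative-dimension bound alone only gives the Goresky--MacPherson inequality $\mathrm{codim}\,Z\le n$.
\item Your appeal to Theorem \ref{cj support} is backwards: it says $\kbb$ is the \emph{only} support of $R\ff^{\xi}_{*}\ql$.
\end{itemize}

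The Abel--Jacobi fallback in your last paragraph fails outright. A length-$n$ subscheme can distribute its points among the components arbitrarily. So for $n$ large the Abel map does not land in $\overline{\kjj}^{\,\xi}$, or in any fine compactified Jacobian. Moreover, if $\kxx^{[n]}$ were a projective bundle over $\overline{\kjj}^{\,\xi}$, Theorem \ref{cj support} would force $R\ppi^{[n]}_{*}\ql$ to have full support. That contradicts the summands on $\kbb_{I_\bullet}$ you are trying to establish. Concretely, take $\gamma=\mathrm{diag}(\gamma_1,\gamma_2)$ with $\val(\gamma_1-\gamma_2)=1$ (two $\bp^{1}$'s meeting in a node) and $n=1$. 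Then $H^{2}(X_\gamma^{[1]})=\ql(-1)^{2}$, and one copy is the summand supported at $\kbb_{\{1\}\sqcup\{2\}}=\{0\}$.

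What is missing is a support inequality that does not rest on a group action, e.g.\ one controlling the higher discriminants of $\ppi^{[n]}$. Reducing to the factors via Proposition \ref{strata local} cannot supply it. That proposition says nothing about candidate supports not contained in any proper $\kbb_{I_\bullet}$, which is exactly where the inequality is needed.

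Step 2 is only a heuristic. Your node count is off: a length-two punctual subscheme at a node uses two points. In the $n=1$ example there is no such subscheme, yet the summand with $n-\delta_{I_\bullet}=0$ is present.

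The natural source of that summand is the closed embedding $\kxx^{\flat,[n-\delta_{I_\bullet}]}_{I_\bullet}\hookrightarrow\kxx^{[n]}\times_{\kbb}\kbb_{I_\bullet}$. It sends $\tilde Z$ to the subscheme with ideal $\nu_{*}(I_{\tilde Z}\cdot\mathfrak c)$, where $\nu$ is the partial normalization at the separating nodes and $\mathfrak c$ its conductor. Each node then costs one point, and the fibrewise codimension $\delta_{I_\bullet}$ produces the shift $[-2\delta_{I_\bullet}](-\delta_{I_\bullet})$.

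Even granting this, identifying the new summand requires more. You must compute the stalks, at a general point of $\kbb_{I_\bullet}$, of the intermediate extensions attached to all coarser partitions. You must then show that what remains is exactly the claimed term. ``Accounted for inductively'' and a generating-function check do not do this.
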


\begin{rem}

It is instructive to observe the structural similarities between theorem \ref{hilbert support} and theorem 5.2 of our work \cite{chen decomposition}, which decomposes the complex $Rf_{n, *}\ql$ as direct sum of intersection complexes, where $f_{n}:\overline{\cp}_{n}\to \cb_{n}$ is the miniversal $\Lambda^{0}/n$-equivariant deformation of the finite abelian covering $\overline{P}_{n}$ of $\overline{P}_{C_{\gamma}}$ with Galois group $\Lambda^{0}/n$. 
In both theorems, the decomposition receives contributions from perverse sheaves supported on the stratum $\kbb_{I_{\bullet}}$ and $\cs_{I_{\bullet}}$ respectively, as $I_{\bullet}$ ranges over all the partitions of $\{1,\cdots,r\}$.
It is even more instructive to observe that $\kbb_{I_{\bullet}}$ and $\cs_{I_{\bullet}}$ are analytically isomorphic in a sufficiently small open neighborhood of $0$.

\end{rem}

The summand at the right hand side of the equation (\ref{hilbert fiber}) indexed by the partition $I_{\bullet}=\{1,\cdots, r\}$ is called the \emph{main term}, we will show that the remaining terms can be simplified.
Since they involve the fibers at $0$ of certain intersection complex, it suffices to study the behaviour of the intersection complex in a sufficiently small open neighbourhood $U$ of $0$.
By construction, the restriction of $\ppi:\kxx\to \kbb$ to $U$ is a miniversal deformation of the singularity $X_{\gamma}^{\circ}$.  
Let $\varphi_{I_{\bullet}}: \kbb_{I_{\bullet}}^{\flat}\to \kbb_{I_{\bullet}}$ be the normalization. 
Let $U_{I_{\bullet}}=U\cap \kbb_{I_{\bullet}}$ and $U_{I_{\bullet}}^{\flat}=\varphi_{I_{\bullet}}^{-1}(U_{I_{\bullet}})$. 
For a subset $I\subset \{1,\cdots,r\}$, let $\ppi_{I}:(\kxx_{I}, X_{I})\to (\kbb_{I}, 0)$ be an algebraization of a miniversal deformation of $X_{I}$, and let $\ff_{I}^{\xi}: \overline{\mathfrak{J}}{}_{I}^{\,\xi}\to 
\kbb_{I}$ be $\xi$-stable relative compactified Jacobian of the family $\ppi_{I}$, and let $\ppi^{[n]}_{I}:\kxx^{[n]}_{I}\to \kbb_{I}$ be the relative punctual Hilbert scheme.
By proposition \ref{strata local}, we have a formally smooth morphism ${\widehat{\kbb}}_{I_{\bullet}}^{\flat} \to \prod_{j=1}^{l}\defm^{\rm top}_{X^{\circ}_{I_{j}}}$, hence the pull-back of the family $\pvarpi_{I_{\bullet}}: \kxx_{I_{\bullet}}^{\flat}\to \kbb_{I_{\bullet}}$ along the normalization $U_{I_{\bullet}}^{\flat}\to U_{I_{\bullet}}$ is a versal deformation of the curve $X_{I_{\bullet}}=\bigsqcup_{j=1}^{l} X_{I_{j}}$, and we get  a smooth morphism $\phi_{I_{\bullet}}: U_{I_{\bullet}}^{\flat}\to \prod_{j=1}^{l}\kbb_{I_{j}}$. 
For an unordered partition $n=n_{1}+\cdots+n_{l}$, we get a product family 
$$
\ppi^{[n_{1},\cdots, n_{l}]}_{I_{\bullet}}:\textstyle{\prod_{j=1}^{l}}\kxx^{[n_{j}]}_{I_{j}}\to \textstyle{\prod_{j=1}^{l}}\kbb_{I_{j}}.
$$
The pull-back of the family $\pvarpi_{I_{\bullet}}^{[n]}: \kxx_{I_{\bullet}}^{\flat, [n]}\to \kbb_{I_{\bullet}}$ along the normalization $U_{I_{\bullet}}^{\flat}\to U_{I_{\bullet}}\subset \kbb_{I_{\bullet}}$ then coincides with the pull-back of the family $\bigsqcup_{n=n_{1}+\cdots+n_{l}} \ppi_{I_{\bullet}}^{[n_{1}, \cdots, n_{l}]}$ along the morphism $\phi_{I_{\bullet}}$.
Consequently, we get a factorization
\begin{equation}\label{factorize hilbert}
\varphi_{I_{\bullet}}^{*}(R\pvarpi_{I_{\bullet}, *}^{[n]}\ql)|_{U_{I_{\bullet}}^{\flat}}=
\bigoplus_{n=n_{1}+\cdots+n_{l}} 
\phi_{I_{\bullet}}^{*}\big(R\ppi_{I_{1},*}^{[n_{1}]}\ql\boxtimes \cdots \boxtimes R\ppi_{I_{l},*}^{[n_{l}]}\ql\big).
\end{equation}
For $j=1,\cdots, l$, let $\jmath_{I_{j}}: \kbb_{I_{j}}^{\sm}\to \kbb_{I_{j}}$ be the inclusion of the open subscheme over which $\ppi_{I_{j}}$ is smooth. 
Let $U_{I_{\bullet}}^{\flat, \sm}:=U_{I_{\bullet}}^{\flat}\times_{\kbb_{I_{\bullet}}}\kbb_{I_{\bullet}}^{\sm}$ and let $j_{I_{\bullet}}^{\flat}: U_{I_{\bullet}}^{\flat, \sm}\to U_{I_{\bullet}}^{\flat}$ be the inclusion.
Applying the functor $(j_{I_{\bullet}}^{\flat})_{!*}(j_{I_{\bullet}}^{\flat})^{*}$ to both sides of (\ref{factorize hilbert}), we get the equality of the main terms 
\begin{equation}\label{hilbert factorization main}
\big((j^{\flat}_{I_{\bullet}})_{!*}\varphi_{I_{\bullet}}^{*}R\pvarpi_{I_{\bullet}, *}^{[n], \sm}\ql\big)|_{U_{I_{\bullet}}^{\flat}}=
\bigoplus_{n=n_{1}+\cdots+n_{l}} 
\phi_{I_{\bullet}}^{*}\big((\jmath_{I_{1}})_{!*}R\ppi_{I_{1},*}^{[n_{1}], \sm}\ql\boxtimes \cdots \boxtimes (\jmath_{I_{l}})_{!*}R\ppi_{I_{l},*}^{[n_{l}],\sm}\ql\big). 
\end{equation}
Recall that the normalization $\varphi_{I_{\bullet}}: \kbb_{I_{\bullet}}^{\flat}\to \kbb_{I_{\bullet}}$ is finite and generically an isomorphism, and the inverse image of $0\in \kbb_{I_{\bullet}}$ consists of unique point $0_{I_{\bullet}}$. Applying the functor $(\varphi_{I_{\bullet}})_{*}$ to the equation (\ref{hilbert factorization main}), and take the fiber at $0$, we get
\begin{equation}\label{levi simplified}
\big((\jj_{I_{\bullet}})_{!*}R\pvarpi_{I_{\bullet}, *}^{[n], \sm}\ql\big)_{0}=\bigoplus_{n=n_{1}+\cdots+n_{l}} 
\big((\jmath_{I_{1}})_{!*}R\ppi_{I_{1},*}^{[n_{1}], \sm}\ql\big)_{0}\otimes \cdots \otimes \big((\jmath_{I_{l}})_{!*}R\ppi_{I_{l},*}^{[n_{l}],\sm}\ql\big)_{0}, 
\end{equation}
here for the right hand side we have used the fact that $\phi_{I_{\bullet}}$ is smooth. 
Combining the equations (\ref{hilbert fiber}) and (\ref{levi simplified}), we get
\begin{align}
H^{i}(X_{\gamma}^{[n]}, \ql)
\cong &
\bigoplus_{I_{\bullet}\vdash \{1,\cdots, r\}} 
\bigoplus_{i'=0}^{i-2\delta_{I_{\bullet}}}
\bigoplus_{\substack{n-\delta_{I_{\bullet}}=n_{1}+\cdots+n_{l}\\l=length(I_{\bullet})}} 
\bigoplus_{i'=i'_{1}+\cdots+i'_{l}} \nonumber
\\
&
\ch^{i-2\delta_{I_{\bullet}}-i'}\Big\{\big((\jmath_{I_{1}})_{!*}R^{i_{1}'}\ppi_{I_{1},*}^{[n_{1}], \sm}\ql \big)_{0}\otimes \cdots \otimes \big((\jmath_{I_{l}})_{!*}R^{i_{l}'}\ppi_{I_{l},*}^{[n_{l}], \sm}\ql)\big)_{0} \Big\}\otimes \ql(-\delta_{I_{\bullet}}). \label{hilbert factor}
\end{align}
Each of the factor $R\ppi_{I_{j},*}^{[n_{j}], \sm}\ql$ can be further simplified. Indeed, for a flat family of smooth projective irreducible curves $p:C\to S$, let $p^{[n]}: C^{[n]}\to S$ be the relative Hilbert scheme, we have Macdonald's formula \cite{macdonald}: 
\begin{equation*}\label{macdo}
Rp^{[n]}_{*}\ql=\bigoplus_{\substack{i+j\le n\\ i,j\ge 0}} \BigWedge^{i}R^{1}p_{*}\ql [-i-2j](-j).
\end{equation*}
Plug it into the equation (\ref{hilbert factor}), we get

\begin{thm}\label{hilbert}
We have the isomorphism
\begin{align*}
H^{i}(X_{\gamma}^{[n]}, \ql)
\cong &
\bigoplus_{I_{\bullet}\vdash \{1,\cdots, r\}} 
\bigoplus_{i'=0}^{i-2\delta_{I_{\bullet}}}
\bigoplus_{\substack{n-\delta_{I_{\bullet}}=n_{1}+\cdots+n_{l}\\l=length(I_{\bullet})}} 
\bigoplus_{\substack{i'=i'_{1}+\cdots+i'_{l}\\ i_{j}'\le n_{j}, \,j=1,\cdots, l}}
\bigoplus_{i_{j}'=i_{j}''+k_{j}, \, j=1, \cdots, l}
\\
&
\ch^{i-2(\delta_{I_{\bullet}}+k_{1}+\cdots+k_{l})-i'}\Big\{\big((\jmath_{I_{1}})_{!*}(\BigWedge^{i_{1}''}R^{1}\ppi_{I_{1},*}^{\sm}\ql)\big)_{0}\otimes \cdots \otimes \big((\jmath_{I_{l}})_{!*}(\BigWedge^{i_{l}''}R^{1}\ppi_{I_{l},*}^{\sm}\ql)\big)_{0} \Big\}
\\
&\otimes \ql(-\delta_{I_{\bullet}}-k_{1}-\cdots-k_{l}).
\end{align*}

\end{thm}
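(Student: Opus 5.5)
The plan is to start from the factorized formula (\ref{hilbert factor}) and rewrite each factor $\big((\jmath_{I_{j}})_{!*}R^{i'_{j}}\ppi^{[n_{j}],\sm}_{I_{j},*}\ql\big)_{0}$ using Macdonald's formula. Over $\kbb_{I_{j}}^{\sm}$ the family $\ppi_{I_{j}}^{\sm}$ is a flat family of smooth projective curves. These curves are irreducible: a smoothing of the connected curve $X_{I_{j}}$ is connected, and a smooth connected curve is irreducible. So Macdonald's formula applies to $p=\ppi^{\sm}_{I_{j}}$ and gives
\begin{equation*}
R^{i'_{j}}\ppi^{[n_{j}],\sm}_{I_{j},*}\ql\cong\bigoplus_{\substack{i'_{j}=i''_{j}+2k_{j}\\ i''_{j}+k_{j}\le n_{j}}}\BigWedge^{i''_{j}}R^{1}\ppi^{\sm}_{I_{j},*}\ql\,(-k_{j}).
\end{equation*}
This is obtained by taking the degree-$i'_{j}$ cohomology sheaf of the displayed decomposition. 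Each summand $\BigWedge^{a}R^{1}p_{*}\ql[-a-2b](-b)$ contributes to $R^{a+2b}$ and to nothing else, since these are local systems placed in a single degree.

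Next, I will use that intermediate extension is additive and commutes with Tate twists: $\jmath_{!*}(\bigoplus L_{\alpha}(-k_{\alpha}))=\bigoplus(\jmath_{!*}L_{\alpha})(-k_{\alpha})$. Substituting this into (\ref{hilbert factor}), each tensor product of stalks splits as a direct sum over the tuples $(i''_{j},k_{j})$. The twists multiply to $\ql(-k_{1}-\cdots-k_{l})$ and combine with the existing $\ql(-\delta_{I_{\bullet}})$.

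The remaining step is index bookkeeping, which I expect to be the only real obstacle. In (\ref{hilbert factor}) the stalk degree is $i-2\delta_{I_{\bullet}}-i'$ with $i'=\sum i'_{j}$. After substitution, the local-system degree becomes $\sum i''_{j}$, and the shift $2\sum k_{j}$ moves from the local-system index into the cohomological index. I have to match this carefully against the exponent $i-2(\delta_{I_{\bullet}}+\sum k_{j})-i'$ written in the statement, reading $i'$ there as $\sum_{j} i''_{j}$ after renaming. The statement's convention $i'_{j}=i''_{j}+k_{j}$ together with $i'_{j}\le n_{j}$ corresponds to Macdonald's constraint $i''_{j}+k_{j}\le n_{j}$. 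So I will reparametrize the sums so that $i'_{j}$ denotes $i''_{j}+k_{j}$, check that the total cohomological degree is $i$ in each summand, and check that the ranges of summation coincide.

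Finally, I will observe that the stalk at $0$ of a tensor (exterior) product is the tensor product of stalks. This was already used in deriving (\ref{levi simplified}), so no new input is needed beyond bookkeeping.
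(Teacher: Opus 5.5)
Your argument is the paper's own: substitute Macdonald's formula degree by degree into (\ref{hilbert factor}), using that $(\jmath_{I_j})_{!*}$ commutes with finite direct sums and Tate twists. Your justification that the smooth fibres of $\ppi_{I_j}$ are irreducible makes explicit a hypothesis the paper uses without comment.

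The one thing to correct is the bookkeeping you flagged. The printed conventions $i'=\sum_j i'_j$ and $i'_j=i''_j+k_j$ are inconsistent with the exponent $i-2(\delta_{I_\bullet}+\sum_j k_j)-i'$: it is off by $\sum_j k_j$. For example, take $r=1$ and $n=1$. The summand with $i''_1=0$, $k_1=1$ would then put $\ql(-1)$ into $H^{3}$ of the curve $X_\gamma$, which is impossible. So you cannot impose both of your readings of $i'$ at once.

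What your substitution actually proves is the formula with exponent $i-2\delta_{I_\bullet}-\sum_j(i''_j+2k_j)$ and constraints $i''_j+k_j\le n_j$. Equivalently, it is the statement with $i-2(\delta_{I_\bullet}+\sum_j k_j)-i'$ replaced by $i-2\delta_{I_\bullet}-\sum_j k_j-i'$. State that corrected form rather than trying to match the printed one.
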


\begin{rem}

We have explained a bijection between the set of partitions of $\{1,\cdots,r\}$ and the set $\cl(M_{0})$ of Levi subgroups containing $M_{0}$ in remark \ref{partition levi}. 
With this in mind and with the isomorphism (\ref{cj fiber}), Theorem \ref{hilbert} can be regarded as a global analogue of the Harder-Narasimhan reduction for the affine Springer fibers as explained in \cite{chen xi} and \cite{chen truncate}.

\end{rem}

\section{Consequence for the affine Springer fibers}

We explore the implications of theorem \ref{cj support} and \ref{hilbert} for the affine Springer fiber $\xx_{\gamma}$.
For this, we describe the $\xi$-stable compactified Jacobian $\overline{J}_{X_{\gamma}}^{\,\xi}$ and the punctual Hilbert scheme $X_{\gamma}^{[n]}$ in terms of $\xx_{\gamma}$. 
Let $\rT\subset G_{F}$ be the centralizer of $\gamma$ and let $\rS$ be the maximal $F$-split subtorus of $\rT$, it is clear that $\rS\cong \bbg_{m}^{r}$.

\subsection{The $\xi$-stable compactified Jacobian and the $\xi$-stabe quotient of $\xx_{\gamma}$}

The $\xi$-stable compactified Jacobian $\overline{J}_{X_{\gamma}}^{\,\xi}$ of $X_{\gamma}$ can be described in terms of $\xx^{0, \,\xi}_{\gamma}/\rS$, following ideas of Laumon \cite{laumon springer}. 
To begin with, notice that any rank $1$ torsion-free coherent $\co_{X_\gamma}$-module $\ci$ satisfies $\ci|_{X_{\gamma}\backslash\{0\}}\cong \co_{X_{\gamma}\backslash\{0\}}$ as  $X_{\gamma}\backslash\{0\}$ is the disjoint union of $r$ copies of $\ba^{1}$, and it can be obtained by glueing a rank $1$ torsion-free coherent sheaf $I$ on $X_{\gamma}^{\circ}$ with the sheaf $\co_{X_{\gamma}\backslash\{0\}}$.
Let $\overline{\mathbf{P}}{}^{\natural}_{X_{\gamma}}$ be the functor which associates to an affine $k$-scheme $S$ the groupo\"id of couples $(\ci,\iota)$, where $\ci$ is a rank $1$ torsion-free coherent $\co_{X_\gamma\times S}$-module of degree $0$, and $\iota:\ci|_{(X_\gamma\backslash \{0\})\times S}\cong \co_{(X_\gamma\backslash \{0\})\times S}$ is a trivialization of $\ci$ over $(X_\gamma\backslash \{0\})\times S$. 
Then $\overline{\mathbf{P}}{}^{\natural}_{X_{\gamma}}$ is representable by a $k$-scheme, and if we forget the rigidification $\iota$ we get the quotient stack 
$
\big[\,\overline{\mathbf{P}}{}^{\natural}_{X_{\gamma}}/\bbg_{m}^{r}\big]$. Note that it strictly contains $ \overline{\pic}^{\,0}_{X_{\gamma}/k}$ because we haven't required $\ci$ to be simple.  
On the other hand, the affine Springer fiber $\xx_{\gamma}$ parametrizes the lattices $L$ in $F^{n}$ such that $\gamma L\subset L$. Such lattices can be naturally identified with sub-$\co[\gamma]$-modules of finite type in $E:=F[\gamma]\cong F^{d}$, or equivalently rank $1$ torsion-free coherent sheaves on $X_{\gamma}^{\circ}$. 
We can glue them with the sheaf $\co_{X_{\gamma}\backslash\{0\}}$ along $(X_{\gamma}\backslash\{0\})\times_{X_{\gamma}} X_{\gamma}^{\circ}=\spec(E)$, to get rank $1$ torsion-free  coherent $\co_{X_{\gamma}}$-modules.  
With this construction, we get a morphism
$$
\xx^{0}_{\gamma}\to \overline{\bp}{}^{\natural}_{X_{\gamma}}.
$$
The same argument as in the proof of Proposition 2.3.1 in \cite{laumon springer} applies and yields the following:

\begin{prop}\label{cj springer}

The morphism $
\xx^{0}_{\gamma}\to \overline{\bp}{}^{\natural}_{X_{\gamma}}
$ is a universal homeomorphism.

\end{prop}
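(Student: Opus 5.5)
The plan is to follow Laumon's proof of Proposition 2.3.1 in \cite{laumon springer}: build an explicit inverse on $k$-points, show the morphism is bijective on points over every algebraically closed field, and then check that it is proper (or at least universally closed) and radicial on each finite-type piece. A universal homeomorphism is the same as a morphism that is integral, universally injective and surjective. So it suffices to prove bijectivity on geometric points, together with properness over each finite-type closed piece of $\overline{\bp}{}^{\natural}_{X_{\gamma}}$.

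\textbf{Step 1: bijectivity on points.} Let $K\supset k$ be algebraically closed and take a couple $(\ci,\iota)$ over $K$. Restricting $\ci$ to $X^{\circ}_{\gamma,K}$ and using $\iota$ on the punctured germ $\spec(E\otimes K)$ gives a finitely generated $\co_{K}[\gamma]$-submodule $L$ of $E_{K}=K(\!(\varep)\!)[\gamma]$ that spans $E_{K}$, i.e.\ a $\gamma$-stable lattice. Since $X_{\gamma}$ is glued from the germ and the affine pieces, $(\ci,\iota)$ is recovered from $L$ by Beauville--Laszlo gluing. This gives the inverse map. Next I must check that the degree-$0$ condition corresponds exactly to $\nu(L)=0$. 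Each $X_{i}\setminus\{0\}\cong\ba^{1}$ and $\co_{X_{\gamma}}$ glues from $A=\co[\gamma]$. Hence
\[
\chi(\ci)-\chi(\co_{X_{\gamma}})=[L:A],
\]
the relative index $\dim L/(L\cap A)-\dim A/(L\cap A)$, which equals $-\val\det(g)$ up to sign convention. So $\xx^{0}_{\gamma}$ corresponds exactly to degree $0$.

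\textbf{Step 2: families.} For an affine $S$, Beauville--Laszlo descent along $X^{\circ}_{\gamma}\times S\to X_{\gamma}\times S$ identifies families of $\gamma$-stable lattices with families $(\ci,\iota)$ that are flat over $S$. The subtle point is that a lattice family $L_{S}\subset E\otimes\co_{S}(\!(\varep)\!)$ is required to be locally bounded, $\varep^{N}A_{S}\subset L_{S}\subset\varep^{-N}A_{S}$. On the sheaf side, boundedness is automatic for a coherent $\ci$ trivialized off $0$, but only locally on $S$. This is why I expect to get a universal homeomorphism rather than an isomorphism: possible nonreduced discrepancies come from the reduced ind-structure of $\xx_{\gamma}$ versus the scheme structure of $\overline{\bp}{}^{\natural}$. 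In Laumon's argument this is handled by passing to bounded pieces $\varep^{N}A\subset L\subset\varep^{-N}A$. On such a piece, both sides become closed subschemes of a Grassmannian of $\varep^{-N}A/\varep^{N}A$. The map is then a proper monomorphism on points, hence finite, radicial and surjective.

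\textbf{Step 3: compatibility with the filtrations.} The bounded pieces exhaust both sides: every coherent $\ci$ has bounded torsion-free germ, so the inductive systems are cofinal. I expect this matching of ind-structures (and representability of $\overline{\bp}{}^{\natural}$ as a scheme locally of finite type compatible with it) to be the main obstacle. I would handle it by using the bound $\varep^{N}A\subset L$ with $N$ controlled by the conductor and the degree, exactly as in \cite{laumon springer}.
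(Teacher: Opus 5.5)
Your proposal follows essentially the same route as the paper, which proves this statement only by invoking the argument of Laumon's Proposition~2.3.1. That argument consists of Beauville--Laszlo gluing of a $\gamma$-stable lattice with the trivial sheaf on $X_{\gamma}\setminus\{0\}$, identifying degree $0$ with index $0$, and comparing the two sides on bounded pieces.

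One small correction to Step~3. Since $X_{\gamma}$ has $r$ components, the total degree does not bound $N$: the $\Lambda^{0}$-translates of a point all have degree $0$ but are unbounded. The bound should instead come from the conductor together with the multidegree of $\widetilde{A}L$, or simply from local boundedness over quasi-compact bases. This is enough, because being a universal homeomorphism is local on the target.
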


We proceed to compare the notion of $\xi$-stability on $\xx^{0}_{\gamma}$ and $\overline{\bp}{}^{\natural}_{X_{\gamma}}$.
Let me recall the $\xi$-stability condition on $\xx_{\gamma}$ as defined in \cite{chen xi} and \cite{chen truncate}. 
Let $M_{0}$ be the centralizer of $\rS$ in $G$, it is the minimal Levi subgroup of $G$ containing $\rT$. 
For $M\in \cl(M_{0})$, the inclusion $M\hookrightarrow G$ induces a closed immersion of $\xx^{M}$ in $\xx^{G}$.  
For $P=MN\in \cf(M_{0})$, we have the retraction
$$
f_{P}: \xx^{G}\to \xx^{M}, \quad [g]=[nmk]\mapsto [m],
$$
where $g=nmk$ with $n\in N(F), m\in M(F), k\in K$ is the Iwasawa decomposition. 
Moreover, we have a unique map\footnote{Our definition differs from the conventional one by a minus sign. } $H_{M}: M(F)\to \ka_{M}$  satisfying
$$
\chi(H_{M}(m))=\val(\chi(m)),\quad \forall\, \chi\in X^{*}(M),\,m\in M(F),
$$
which is actually a group homomorphism.
It is invariant under the right $K$-action, so it induces a map from $\xx^{M}$ to $\ka_{M}$, still denoted by $H_{M}$. We define $H_{P}:\xx\to \ka_{M}$ as the composition 
$$
H_{P}:\xx\xrightarrow{f_{P}}\xx^{M}\xrightarrow{H_{M}}\ka_{M}.
$$
It can be shown that $H_{P}$ coincides with the composition
$$
H_{P}:\xx\xrightarrow{f_{P}}\xx^{M}\xrightarrow{\nu_{M}}\Lambda_{M}\to \ka_{M}.
$$

The map $H_{P}$ has the following remarkable property.
There is a notion of adjacency among the parabolic subgroups in $\cp(M)$: Two parabolic subgroups $P_{1}=MN_{1},\,P_{2}=MN_{2}\in \cp(M)$ are said to be \emph{adjacent} if both of them are contained in a parabolic subgroup $Q=LN_{Q}$ such that $L\supset M$ and $\rk(L)=\rk(M)+1$. Given such an adjacent pair, we define an element $\beta_{P_{1},P_{2}}\in \Lambda_{M}$ in the following way: Consider the collection of elements in $\Lambda_{M}$ obtained from coroots of $A$ in $\kn_{1}\cap \kn_{2}^{-}$, we define $\beta_{P_{1},P_{2}}$ to be the minimal element in this collection, i.e. all the other elements are positive integral multiples of it. Note that $\beta_{P_{2},P_{1}}=-\beta_{P_{1},P_{2}}$, and if $M=A$, then $\beta_{P_{1},P_{2}}$ is the unique coroot which is positive for $P_{1}$ and negative for $P_{2}$. We denote also by $\beta_{P_{1},P_{2}}$ for its image in $\ka_{M}$ if no confusion is caused.

\begin{prop}[Arthur \cite{a}, Lemma 3.6]\label{arthur orthogonal}

Let $P_{1},\,P_{2}\in \cp(M)$ be two adjacent parabolic subgroups. For any $x\in \xx$, we have
$$
H_{P_{1}}(x)-H_{P_{2}}(x)=n(x,P_{1},P_{2})\cdot \beta_{P_{1},P_{2}},
$$
for some $n(x, P_{1}, P_{2})\in \bz_{\ge 0}$.

\end{prop}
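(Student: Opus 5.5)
The plan is to reduce the statement to a rank-one computation inside the Levi subgroup $L$, where $L\supset M$ is a Levi with $\rk(L)=\rk(M)+1$ and $Q=LN_{Q}$ is a parabolic containing both $P_{1}$ and $P_{2}$.

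\textbf{Reduction to $L$.} Write $P_{i}\cap L=M(N_{i}\cap L)$; these are the two opposite parabolic subgroups of $L$ with Levi factor $M$. Take $x=[g]$ and use the Iwasawa decomposition for $Q$ to write $g=n_{Q}\,l\,k$. Because $N_{i}=N_{Q}\cdot(N_{i}\cap L)$, the retraction factors as
$$
f_{P_{i}}=f^{L}_{P_{i}\cap L}\circ f_{Q}.
$$
Hence $H_{P_{i}}(x)=H^{L}_{P_{i}\cap L}(y)$ with $y=f_{Q}(x)\in \xx^{L}$. So I may assume $G=L$, with $M$ a maximal Levi subgroup, $P_{2}=\overline{P_{1}}$ the opposite parabolic, and $\ka_{M}^{L}$ one-dimensional and spanned by $\beta:=\beta_{P_{1},P_{2}}$.

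\textbf{Component along $\ka_{L}$.} The $\ka_{L}$-components of the two quantities agree. Indeed, $\pi_{L}(H_{P_{i}}(x))=H_{L}(x)$, since $H_{L}$ is invariant under $N_{i}(F)$ and under $K$. Thus $H_{P_{1}}(x)-H_{P_{2}}(x)\in\ka_{M}^{L}=\br\beta$. Integrality then follows from the second description, $H_{P}=\nu_{M}\circ f_{P}$. Both $\nu_{M}(f_{P_{1}}(x))$ and $\nu_{M}(f_{P_{2}}(x))$ map to the same element $\nu_{L}(x)$ of $\Lambda_{L}$, so their difference lies in the kernel of $\Lambda_{M}\to\Lambda_{L}$. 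That kernel is generated by the images of the coroots of $L$, and modulo torsion these are integral multiples of $\beta$ by the definition of $\beta$. So the difference has the form $n\beta$ with $n\in\bz$.

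\textbf{Sign of $n$.} This is the main step: I must show $n\ge 0$. Choose a fundamental weight $\varpi\in X^{*}(M)$ that is trivial on $Z_{L}$ and positive for $P_{1}$. Take the corresponding representation $V$ of $L$ with highest weight vector $v$ for $P_{1}$; the line $kv$ is then stable under $P_{1}$. For $g=n_{1}m_{1}k_{1}$ we have
$$
\val(\varpi(m_{1}))=\val\big(\|g\,v\|\big)
$$
with respect to the $\co$-lattice norm on $V$, because $N_{1}$ fixes $v$ and $k_{1}$ preserves the lattice; here I use $gv$ computed with $g^{-1}$ conventions consistent with the paper's sign convention. The same $g$ written via $P_{2}$ uses the lowest weight vector $v^{-}$, which has weight $-\varpi$ up to the Weyl element. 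Using $\|g v\|\ge|\langle\text{coefficient}\rangle|$ for the extreme weight component, the inequality
$$
\val\big(\text{coefficient of } v \text{ in } g\,v^{\pm}\big)\ge \val\big(\|g\,v^{\pm}\|\big)
$$
gives $\varpi(H_{P_{1}}(x))\ge\varpi(H_{P_{2}}(x))$, possibly with the direction reversed depending on the convention. Since $\varpi(\beta)>0$, this yields $n\ge 0$. Pinning down the correct direction given the paper's minus-sign convention is where I expect the care to be needed.

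For $\gl_{d}$ one can make this concrete. Here $M=\gl_{a}\times\gl_{b}$ and $\varpi=\det$ on the first block. Then $H_{P_{1}}$ records $\val\det$ of $L\cap F^{a}$, while $H_{P_{2}}$ records $\val\det$ of the projection of $L$ to $F^{a}$. Since $L\cap F^{a}$ is contained in that projection, the required inequality of valuations is immediate, which settles the sign in this case.
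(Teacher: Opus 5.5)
Your proof is correct in the paper's setting $G=\gl_{d}$, and it follows the standard route: you reduce to the Levi $L$ with $\rk(L)=\rk(M)+1$, then get $H_{P_{1}}(x)-H_{P_{2}}(x)\in\bz\,\beta_{P_{1},P_{2}}$ from $\pi_{L}$ and the Kottwitz map. The sign comes from your containment $g\co^{a+b}\cap F^{a}\subset\mathrm{pr}_{F^{a}}(g\co^{a+b})$, which gives $\det_{1}(H_{P_{1}}(x))\ge\det_{1}(H_{P_{2}}(x))$ with $\det_{1}(\beta_{P_{1},P_{2}})=1$; this is exactly the direction required by the paper's sign convention. The paper itself only cites Arthur here and gives no proof.

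The abstract highest-weight paragraph, however, is wrong as written: $k_{1}$ does not fix $v$, and no lowest weight vector is needed. It should be dropped or replaced as follows. Let $\mathrm{ord}(w)$ denote the minimal valuation of the coordinates of $w$ in a $K$-stable $\co$-lattice spanned by weight vectors. Then $\mathrm{ord}(g^{-1}v)=-\val\varpi(m_{1})$. Writing $g=n_{2}m_{2}k_{2}$ and using that $n_{2}^{-1}v-v$ only has $A_{M}$-weights different from $\varpi$ gives $\mathrm{ord}(g^{-1}v)\le-\val\varpi(m_{2})$. Hence $\varpi(H_{P_{1}}(x))\ge\varpi(H_{P_{2}}(x))$, using the same $P_{1}$-highest weight vector $v$ for both parabolics; this settles the direction you had left open.
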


For any point $x\in \xx$, we write $\ec_{M}(x)$ for the convex hull in $\ka_{M}$ of the $H_{P}(x),\,P\in \cp(M)$. For any $Q\in \cf(M)$, we denote by $\ec_{M}^{Q}(x)$ the face of $\ec_{M}(x)$ whose vertices are $H_{P}(x),\,P\in \cp(M),\,P\subset Q$.

\begin{defn}

Let $\xi\in \ka_{M}^{G}$ be a generic element.
A point $x\in \xx$ is said to be $\xi$-\emph{stable} if the polytope $\pi^{G}\big(\ec_{M}(x)\big)$ contains $\xi$. 

\end{defn}

By lemma 4.1 of \cite{chen truncate}, the $\xi$-stable locus
$$
\xx^{\xi}=\big\{x\in \xx\mid \xi\in \pi^{G}\big(\ec_{M}(x)\big)\big\}
$$
is an open sub-ind-$k$-scheme of $\xx$. Let 
$$
\xx_{\gamma}^{\xi}=\xx_{\gamma}\cap \xx^{\xi},
$$
then it is an open subscheme of $\xx_{\gamma}$.
The morphism $\rT(F)\xrightarrow{H_{M}} X_{*}(M)$ is surjective, hence the connected components of $\xx_{\gamma}^{\xi}$ can be translated to each other by elements of $\rT(F)$. 
In particular, all the connected components of $\xx_{\gamma}^{\xi}$ are isomorphic to each other. 
Moreover, for different choices of generic element $\xi,\,\xi'\in \ka_{M}^{G}$, the corresponding $\xx_{\gamma}^{\xi},\,\xx_{\gamma}^{\xi'}$ can be translated to each other by elements of $\rT(F)$. Hence $\xx_{\gamma}^{\xi}$ doesn't depend on the choice of $\xi$.  

The $\xi$-stability condition can be reformulated in linear algebraic terms. 
We can identify $\ka_{M_{0}}=\ka_{\rS}$. 
Moreover, we have the isomorphism
$$
\rS\cong H^{0}\big(\spec(F(\gamma)), \bbg_{m}\big)=\prod_{i=1}^{r} H^{0}\big(\spec(F(\gamma_{i})), \bbg_{m}\big). 
$$
An element $\xi\in \ka_{M_{0}}$ can be written as $\xi=(\xi_{i})_{i=1}^{r}$ with $\xi_{i}\in H^{0}\big(\spec(F(\gamma_{i})), \br\big)=\br$, hence it can be identified with a stability condition on $X_{\gamma}$.  
For a lattice $L\subset F^{d}$, we define its \emph{index} as 
$$
\ind(L)=[\co^{d}:L\cap \co^{d}]-[L:L\cap \co^{d}].
$$

\begin{lem}\label{xi plain}

Let $\xi\in \ka_{M_{0}}$ be generic with $\xi_{1}+\cdots+\xi_{r}=0$. 
A point $x_{L}\in \xx_{\gamma}^{0}$ corresponding to a lattice $L\subset F^{d}$ is $\xi$-stable if and only if 
\begin{equation*}
\textstyle{\sum_{i\in I}\xi_{i}}<\ind\big(L\cap \textstyle{\bigoplus_{i\in I}}F(\gamma_{i})\big), \quad \forall\, I\subsetneq \{1,\cdots, r\}.
\end{equation*}

\end{lem}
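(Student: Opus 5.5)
The plan is to reduce $\xi$-stability, i.e.\ $\xi\in\pi^{G}\big(\ec_{M_{0}}(x_{L})\big)$, to one linear inequality for each proper subset $I$. Those inequalities will then be computed from the lattice $L$.

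First, describe the polytope through its facets. Since $\gamma\in M_{0}$, $L$ is $\rT(F)$-stable and we may work with $M=M_{0}$. Then $\ka_{M_{0}}^{G}=\{(t_{i})\in\br^{r}:\sum t_{i}=0\}$, and the facets of the convex hull correspond to maximal parabolics $Q\in\cf(M_{0})$. In block form, such a $Q$ stabilizes the flag $0\subset\bigoplus_{i\in I}F(\gamma_{i})\subset F^{d}$ for a subset $\emptyset\neq I\subsetneq\{1,\dots,r\}$.

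Next, check that the polytope is a "positive" convex hull. By Proposition \ref{arthur orthogonal}, adjacent vertices differ by nonnegative multiples of $\beta_{P_{1},P_{2}}$. This is the standard Arthur positivity: $\ec_{M_{0}}(x)$ is a $(G,M_{0})$-orthogonal set, hence positive. For such a set, a point $\xi$ lies in the hull if and only if, for every maximal $Q=L_{Q}N_{Q}$, the projection $\xi_{L_{Q}}$ satisfies $\varpi_{Q}(\xi)\le \varpi_{Q}\big(H_{Q}(x)\big)$. Here $\varpi_{Q}$ is the fundamental weight of $Q$ and $H_{Q}$ is the common projection of the vertices $H_{P}(x)$ with $P\subset Q$. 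This is the polar description of a positive orthogonal set (Arthur); I would cite it or prove it by induction on $\dim\ka_{M_{0}}^{G}$, using the faces $\ec^{Q}_{M_{0}}$.

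Then compute $H_{Q}(x_{L})$. For the $Q$ associated with $I$, write $V_{I}=\bigoplus_{i\in I}F(\gamma_{i})$. The Iwasawa retraction $f_{Q}$ sends $L$ to the pair of graded pieces $\big(L\cap V_{I},\, L/(L\cap V_{I})\big)$. The component of $H_{L_{Q}}$ along the determinant of the $V_{I}$-block is $\val\det$ of that block, which is $-\ind(L\cap V_{I})$ or $+\ind(L\cap V_{I})$ depending on the sign convention. Since $x_{L}\in\xx_{\gamma}^{0}$, the total index is $0$, so the component for the complementary block is the negative of this. Pairing with $\varpi_{Q}$, which in these coordinates is $t\mapsto\sum_{i\in I}t_{i}$ up to a normalization factor, turns the facet inequality into
\[
\textstyle\sum_{i\in I}\xi_{i}\le \ind\big(L\cap V_{I}\big).
\]
The inequality is strict because $\xi$ is generic and the indices are integers.

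The main obstacle is the sign and orientation bookkeeping. The footnote says $H_{M}$ has the opposite sign from the usual convention, and the index is defined as $[\co^{d}:L\cap\co^{d}]-[L:L\cap\co^{d}]$. Together these must produce the inequality in exactly the stated direction. I would fix both conventions by checking the case $r=2$ with $L=\co^{d}$ and $L=\varep^{-1}\co\oplus\co$.
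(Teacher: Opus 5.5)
Your proposal is correct and follows essentially the paper's argument. The paper indexes $\cp(M_{0})$ by permutations $\tau\in\kss_{r}$, notes that the partial sums of $H_{P_{\tau}}(x_{L})$ are the indices $\ind\big(L\cap(F(\gamma_{\tau(1)})\oplus\cdots\oplus F(\gamma_{\tau(i)}))\big)$, and reads off stability from the same facet inequalities that you make explicit via the polar description of a positive orthogonal set (the paper uses this description without comment).

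Two small corrections. The sign is $+$, since $\ind(g\co^{d})=\val\det g$ matches the convention $\chi(H_{M}(m))=\val(\chi(m))$. Your claim ``$L$ is $\rT(F)$-stable'' is false ($L$ is only $\co[\gamma]$-stable), but you do not need it, since $M=M_{0}$ is already part of the hypothesis.
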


\begin{proof}

This is a generalization of proposition 3.4 of \cite{chen xi}.
We identify $\Lambda_{M_{0}}=\bz^{r}$ such that the map $\nu_{M_{0}}:M_{0}(F)\to\Lambda_{M_{0}}=\bz^{r}$  is just
$$(m_{1},\cdots, m_{r})\in M_{0}(F)\mapsto \big(\val(\det(m_{1})), \cdots, \val(\det(m_{r}))\big)\in \bz^{r}.$$ 
For $\tau\in \kss_{r}$, we associate the parabolic subgroup $P_{\tau}$ which is the stabilizer of the flag
$$
F(\gamma_{\tau(1)})\subsetneq F(\gamma_{\tau(1)})\oplus F(\gamma_{\tau(2)})\subsetneq\cdots \subsetneq F(\gamma_{\tau(1)})\oplus\cdots\oplus F(\gamma_{\tau(r)})=F^{d}.
$$ 
In this way, we get a bijection between $\cp(M_{0})$ and  $\kss_{r}$. Let $H_{P_{\tau}}(x_{L})=(n_{1},\cdots, n_{r})\in \bz^{r}$, then
$$
n_{\tau(1)}+\cdots+n_{\tau(i)}=\ind(L\cap (F(\gamma_{\tau(1)})\oplus \cdots \oplus F(\gamma_{\tau(i)}))), \quad i=1,\cdots,r. 
$$
So $x_{L}$ is $\xi$-stable if and only if 
$$
\xi_{\tau(1)}+\cdots+\xi_{\tau(i)}<\ind(L\cap (F(\gamma_{\tau(1)})\oplus \cdots \oplus F(\gamma_{\tau(i)}))), \quad \text{for all } \tau\in \kss_{r} \text{ and } i=1,\cdots,r, 
$$
which is equivalent to the condition in the lemma.

\end{proof}


\begin{prop}\label{cj springer stable}

Let $\xi\in \ka_{M_{0}}$ be generic with $\xi_{1}+\cdots+\xi_{r}=0$. The homeomorphism $\xx^{0}_{\gamma}\to \overline{\bp}{}^{\natural}_{X_{\gamma}}$ induces a universal homeomorphism
$$
\xx_{\gamma}^{0, \,\xi}/\rS\to \overline{J}^{\,\xi}_{X_{\gamma}}.
$$
Consequently, $\xx_{\gamma}^{0, \,\xi}/\rS$ is proper over $k$.

\end{prop}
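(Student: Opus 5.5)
We plan to deduce this from Proposition \ref{cj springer} by matching the two stability conditions and then dividing out the $\rS$-action. Under the universal homeomorphism $\xx^{0}_{\gamma}\to \overline{\bp}{}^{\natural}_{X_{\gamma}}$, a lattice $L$ goes to the sheaf $\ci_{L}$ obtained by gluing $L$ with $\co_{X_{\gamma}\backslash\{0\}}$. The first step is to compute $\deg\big((\ci_{L})_{J}\big)$ for $J\subset\{1,\cdots,r\}$. The maximal subsheaf of $\ci_{L}$ supported on $X_{J}$ is obtained by gluing $L\cap\bigoplus_{i\in J}F(\gamma_{i})$ with $\co_{X_{J}\backslash\{0\}}$.

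Its Euler characteristic should be computed by comparison with the normalization. Write $L_{J}=L\cap\bigoplus_{i\in J}F(\gamma_{i})$ and $\co^{d}\cap \bigoplus_{i\in J}F(\gamma_{i})=\bigoplus_{i\in J}\co[\gamma_{i}]^{\flat}$ after a suitable identification; one must be careful that $\co^{d}$ is identified with $\co[\gamma]$. We expect
$$
\chi\big((\ci_{L})_{J}\big)=\chi\big(\co_{X_{J}}\big)+\ind\big(L_{J}\big)\ \text{(up to a constant depending only on }J),
$$
and the constant should cancel against the $\xi_{i}$-normalizations. Granting this, Lemma \ref{xi plain} translates verbatim into the $\xi$-stability of $\ci_{L}$ on $X_{\gamma}$, possibly with $J$ replaced by its complement and the sign convention adjusted. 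We take this dictionary, including the sign conventions relating $\ka_{M_{0}}$ to stability conditions on $X_{\gamma}$, as the main obstacle. Concretely, we would first check the case $r=2$ and then match the general inequality index by index.

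Next, $\xi$-stable sheaves are simple. This holds because a nontrivial endomorphism would split $\ci$ along some subcurve and violate a strict inequality. Hence the $\xi$-stable locus of $[\,\overline{\bp}{}^{\natural}_{X_{\gamma}}/\bbg_{m}^{r}]$ lies in $\overline{\pic}^{\,0}_{X_{\gamma}/k}$, where the stabilizer is only the diagonal $\bbg_{m}$. The inverse image of $\overline{J}^{\,\xi}_{X_{\gamma}}$ in $\overline{\bp}{}^{\natural}_{X_{\gamma}}$ is then a $\bbg_{m}^{r}/\bbg_{m}$-torsor over it. The $\rS$-action on $\xx^{0}_{\gamma}$ corresponds to the $\bbg_{m}^{r}$-action changing the trivialization $\iota$, with the diagonal acting trivially. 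Since $\xx^{0}_{\gamma}\to\overline{\bp}{}^{\natural}_{X_{\gamma}}$ is a universal homeomorphism and is equivariant, passing to quotients gives a universal homeomorphism $\xx_{\gamma}^{0,\xi}/\rS\to\overline{J}^{\,\xi}_{X_{\gamma}}$. Here one should interpret $\rS$ via its image acting freely on the $\xi$-stable locus, using the torsor structure.

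Finally, properness follows from Theorem \ref{xi total}: $\ff^{\xi}$ is proper, so its fiber $\overline{J}^{\,\xi}_{X_{\gamma}}$ over $0$ is proper. A scheme universally homeomorphic to a proper one is proper, because it is separated and universally closed, and it is of finite type since $\overline{J}^{\,\xi}_{X_{\gamma}}$ is.
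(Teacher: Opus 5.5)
Your plan follows the paper's proof step for step. You transfer Lemma~\ref{xi plain} to the sheaf side through a degree/index dictionary, show that $\xi$-stable sheaves are simple because a non-simple one splits as $\cl=\cl_I\oplus\cl_{I^c}$, divide by $\bbg_m^r$, and read off properness from Theorem~\ref{xi total}. Your description of the stable locus of $\overline{\bp}{}^{\natural}_{X_\gamma}$ as a $\bbg_m^r/\bbg_m$-torsor over $\overline{J}^{\,\xi}_{X_\gamma}$ is a fine way to phrase the quotient step.

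The gap is that the dictionary, which is the only step with real content, is left as an expectation. The formula you write for it also has the wrong sign: enlarging $L$ raises $\chi$ of the glued sheaf and lowers $\ind(L)$, so the relation is $\chi(\cl_J)=-\ind(L_J)+c_J$, not $+\ind(L_J)$. With the correct sign, nothing needs to be complemented or flipped. The paper settles the step in one line. With $\co^d$ normalized so that $\ind(\co[\gamma])=0$, \eqref{degree index} gives $\deg(\cl)=\chi(\cl)-\chi(\co_{X_\gamma})=-\ind(L)$. Since $\cl_I$ is the glueing of $L_I=L\cap\bigoplus_{i\in I}F(\gamma_i)$, the same identity turns $\deg(\cl_I)+\sum_{i\in I}\xi_i<0$ into $\ind(L_I)>\sum_{i\in I}\xi_i$, which is Lemma~\ref{xi plain} verbatim.

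If you want to justify that identity for proper $I$ rather than take it from the paper, note that ``a constant depending only on $J$'' is not automatically harmless. Replacing $\xi$ by another generic element shifts the thresholds only by amounts additive in $J$, and the total is pinned down by \eqref{degree index}.

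Your two identifications also clash. Lemma~\ref{xi plain} needs a standard lattice $\co^d=\bigoplus_i\Lambda_i$ adapted to $\bigoplus_iF(\gamma_i)$, whereas $\co[\gamma]\cap\bigoplus_{i\in J}F(\gamma_i)$ is strictly smaller than $\co[\gamma_J]$. What you can arrange is an adapted $\co^d$ with $\ind(\co[\gamma])=0$. Comparing with the normalization of $X_J$ ($|J|$ copies of $\bp^1$) then gives $\chi(\cl_J)=-\ind(L_J)+\sum_{i\in J}\big(1+\ind(\widetilde A_i)\big)$, where $\widetilde A_i$ is the normalization of $\co[\gamma_i]$. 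So $\deg(\cl_J)=-\ind(L_J)$ holds on the nose when the degree of $\cl_J$ is normalized by this additive constant, whose total is $\chi(\co_{X_\gamma})$. Any other additive normalization with the same total only translates $\xi$.

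Normalizing by $\chi(\co_{X_J})$, as your formula does, is not additive, because $\chi(\co_{X_\gamma})=\sum_i\chi(\co_{X_i})-\sum_{i<i'}(X_i\cdot X_{i'})$. With that normalization, a split sheaf satisfies $\deg(\cl_I)+\deg(\cl_{I^c})=\deg(\cl)-(X_I\cdot X_{I^c})$, and your sheaf-side simplicity argument yields no contradiction. So run that argument on the lattice side, as the paper does: $L=L_I\oplus L_{I^c}$ forces $0=\ind(L_I)+\ind(L_{I^c})>\sum_i\xi_i=0$.

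Finally, finite type does not descend along universal homeomorphisms in general. It has to come from $\xx_\gamma^{0,\xi}$ itself, not from $\overline{J}^{\,\xi}_{X_\gamma}$.
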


\begin{proof}

To begin with, we show that the notion of $\xi$-stability on $\xx_{\gamma}$ and $\overline{\bp}{}^{\natural}_{X_{\gamma}}$ coincides. 
Let $L\subset F^{d}$ be a lattice of index $0$ satisfying $\gamma \cdot L\subset L$, and let $\cl$ be the  coherent $\co_{X_{\gamma}}$-module obtained by the glueing of $L$ and $\co_{X_{\gamma}\backslash\{0\}}$ along $X_{\gamma}^{\circ}$. Then
\begin{equation}\label{degree index}
\deg(\cl)=\chi(\cl)-\chi(\co_{X_{\gamma}})=-\ind(L). 
\end{equation}
By lemma \ref{xi plain}, the point $x_{L}$ on $\xx_{\gamma}$ corresponding to the lattice $L$ is $\xi$-stable if and only if 
\begin{equation}\label{stability plain}
\ind\big(L\cap \textstyle{\bigoplus_{i\in I}}F(\gamma_{i})\big)>\sum_{i\in I}\xi_{I}, \quad \forall\, I\subsetneq \{1,\cdots,r\}.
\end{equation}
Let $L_{I}=L\cap \textstyle{\bigoplus_{i\in I}}F(\gamma_{i})$ and let $\cl_{I}$ be the coherent $\co_{X_{\gamma_{I}}}$-module obtained by the glueing of $L_{I}$ with $\co_{X_{\gamma_{I}}\backslash\{0\}}$ along $X_{\gamma_{I}}^{\circ}$.
Then $\cl_{I}$ is the maximal subsheaf of $\cl$ which is supported on $X_{I}$.
The point on $\overline{\bp}{}^{\natural}_{X_{\gamma}}$ corresponding to $\cl$ with the obvious rigidification is $\xi$-stable if and only if 
$$
\deg(\cl_{I})+\sum_{i\in I}\xi_{i}<0, \quad \forall\, I\subsetneq \{1,\cdots,r\}.
$$
By the equation (\ref{degree index}), this is equivalent to
$$
-\ind(L_{I})+\sum_{i\in I}\xi_{i}<0,  
$$ 
which is the same as (\ref{stability plain}), whence the coincidence of $\xi$-stability condition.

Let $\cl$ be a $\xi$-stable rank 1 torsion-free coherent sheaf of degree $0$ on $X_{\gamma}$, then it must be simple. Otherwise, by proposition 1 of \cite{esteves}, it can be decomposed as $\cl=\cl_{I}\oplus \cl_{I^{c}}$ for some $I\subsetneq \{1,\cdots,r\}$.
Let $L$ (respectively $L_{I}$ and $L_{I^{c}}$) be the lattice corresponding to $\cl$ (respectively $\cl_{I}$ and $\cl_{I^{c}}$). 
 By lemma \ref{xi plain}, we have
$$
\ind(L_{I})>\textstyle{\sum_{i\in I}}\xi_{i}
\quad 
\text{and}
\quad
\ind(L_{I^{c}})>\textstyle{\sum_{i\in I^{c}}}\xi_{i},
$$ 
whence a contradiction
$$
0=\ind(L)=\ind(L_{I})+ \ind(L_{I^{c}})>\textstyle{\sum_{i=1}^{r}}\xi_{i}=0.
$$
As a consequence, the $\xi$-stable locus $\overline{\bp}^{\natural,\,\xi}_{X_{\gamma}}$ of $\overline{\bp}^{\natural}_{X_{\gamma}}$ parametrizes the $\xi$-stable rank $1$ torsion-free simple $\co_{X_{\gamma}}$-modules $\cl$ of degree $0$ equipped with a rigidification $\iota:\cl|_{X_{\gamma}\backslash\{0\}}\cong \co_{X_{\gamma}\backslash\{0\}}$. Forgetting the rigidification, we get
$$
\overline{\bp}^{\natural,\,\xi}_{X_{\gamma}}/\bbg_{m}^{r}=\overline{J}_{X_{\gamma}}^{\,\xi}.
$$
Combined with proposition \ref{cj springer} and the above comparison of $\xi$-stability, we get the universal homeomorphism between $\xx^{0,\, \xi}_{\gamma}/\rS$ and $\overline{J}_{X_{\gamma}}^{\,\xi}$. We deduce that $\xx^{0,\, \xi}_{\gamma}/\rS$ is proper because $\overline{J}_{X_{\gamma}}^{\,\xi}$ is proper by theorem \ref{xi total}.

\end{proof}

\subsection{The punctual Hilbert scheme and its local analogues}

The punctual Hilbert scheme $X^{[n]}_{\gamma}$ has a very nice structure. 
Let $X_{\gamma}^{\circ, [n]}$ be the local Hilbert scheme parametrizing the length $n$ quotient algebras $Q$ of $\co[\gamma]$. Take the kernel of the morphism $\co[\gamma]\twoheadrightarrow Q$, we get a rank $1$ torsion-free coherent $\co[\gamma]$-module $I\subset \co[\gamma]$ of index $n$, this sets up an isomorphism between $X_{\gamma}^{\circ, [n]}$ and a specific truncated affine Springer fiber
\begin{equation}\label{local hilbert springer}
X_{\gamma}^{\circ, [n]}\cong \xx_{\gamma}(n):=\big\{L\subset F^{d} \text{ a lattice}\mid \gamma L\subset L \text{ and } L\subset \co[\gamma] \text{ of index }n\big\}.
\end{equation}
We label the irreducible components of $X_{\gamma}\backslash\{0\}$, all of which are isomorphic to $\ba^{1}$, as $\ba_{1}, \cdots, \ba_{r}$.  Then we have the decomposition of $X_{\gamma}^{[n]}$ according to the support of the quotient sheaves
\begin{align*}
X_{\gamma}^{[n]}&=\bigsqcup_{\substack{n=n_{0}+n_{1}+\cdots+n_{r}\\ n_{0},\cdots, n_{r}\ge 0}} X_{\gamma}^{\circ, [n_{0}]}\times \ba_{1}^{[n_{1}]}\times\cdots\times \ba_{r}^{[n_{r}]}
\\
&=\bigsqcup_{\substack{n=n_{0}+n_{1}+\cdots+n_{r}\\ n_{0},\cdots, n_{r}\ge 0}} X_{\gamma}^{\circ, [n_{0}]}\times \ba^{n_{1}}\times\cdots\times \ba^{n_{r}}.
\end{align*}
As each $\ba_{i}$ is open in its closure, for $0\le a\le n$, the union
$$
X_{\gamma, a}^{[n]}:=\bigsqcup_{a\le n_{0}\le n} \bigsqcup_{\substack{n=n_{0}+n_{1}+\cdots+n_{r}\\ n_{1},\cdots, n_{r}\ge 0}} X_{\gamma}^{\circ, [n_{0}]}\times \ba_{1}^{[n_{1}]}\times\cdots\times \ba_{r}^{[n_{r}]}
$$
is a closed subscheme of $X_{\gamma}^{[n]}$, and each subscheme $X_{\gamma}^{\circ, [a]}\times \ba_{1}^{[n_{1}]}\times\cdots\times \ba_{r}^{[n_{r}]}$ with $n_{1}+\cdots+n_{r}=n-a$ is open in $X_{\gamma,a}^{[n]}$. Hence we have the long exact sequence
$$
\cdots \to \bigoplus_{n_{1}+\cdots+n_{r}=n-a}H_{c}^{*}(X_{\gamma}^{\circ, [a]}\times \ba_{1}^{[n_{1}]}\times\cdots\times \ba_{r}^{[n_{r}]}, \ql) \to H^{*}(X_{\gamma, a}^{[n]}, \ql)\to H^{*}(X_{\gamma, a+1}^{[n]}, \ql)\xrightarrow{\delta} \cdots,
$$
which can be simplified as
\begin{equation}\label{hilbert exact 0}
\cdots \to H^{*-2(n-a)}(X_{\gamma}^{\circ, [a]}, \ql)^{\oplus \rp_{r}(n-a)}\otimes \ql(-(n-a)) \to H^{*}(X_{\gamma, a}^{[n]}, \ql)\to H^{*}(X_{\gamma, a+1}^{[n]}, \ql)\xrightarrow{\delta} \cdots,
\end{equation}
where $\rp_{r}(n-a)$ is the number of unordered partitions $n-a=n_{1}+\cdots+n_{r}, n_{i}\ge 0, i=1,\cdots,r$.
By performing the computations inductively on $a$, we obtain an expression of $H^{*}(X_{\gamma}^{[n]}, \ql)$ in terms of the cohomological groups $H^{*}(X_{\gamma}^{\circ, [a]}, \ql), a=0, \cdots, n$.

\begin{prop}\label{hilbert local global} 

$X_{\gamma}^{[n]}$ are cohomologically pure for $1\le n\le m$ if and only if $X_{\gamma}^{\circ, [n']}$ are cohomologically pure for $1\le n'\le m$.

\end{prop}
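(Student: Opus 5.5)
We argue by induction on $m$, using the stratification of $X_{\gamma}^{[n]}$ by the closed subschemes $X_{\gamma,a}^{[n]}$, $0\le a\le n$, and the long exact sequences (\ref{hilbert exact 0}). Here ``cohomologically pure'' means that $H^{i}$ is pure of weight $i$ for all $i$. Since $X_{\gamma}^{\circ,[a]}\cong\xx_{\gamma}(a)$ is projective, $H^{j}(X_{\gamma}^{\circ,[a]},\ql)$ has weights $\le j$. Hence the term $H^{*-2(n-a)}(X_{\gamma}^{\circ,[a]},\ql)^{\oplus\rp_{r}(n-a)}(-(n-a))$ in degree $*$ has weights $\le *$. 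It is pure of weight $*$ exactly when $X_{\gamma}^{\circ,[a]}$ is pure.

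\emph{Local pure implies global pure.} Suppose $X_{\gamma}^{\circ,[n']}$ is pure for all $n'\le n$; note $X_{\gamma}^{\circ,[0]}$ is a point. We show by descending induction on $a$ that $H^{*}(X_{\gamma,a}^{[n]},\ql)$ is pure. For $a=n$ we have $X_{\gamma,n}^{[n]}=X_{\gamma}^{\circ,[n]}$, which is pure. For the inductive step, consider the long exact sequence (\ref{hilbert exact 0}). Its two outer terms in degree $*$ are pure of weight $*$, while the connecting map $\delta$ goes from degree $*$ to degree $*+1$, i.e.\ from weight $*$ to weight $*+1$. So $\delta=0$ and the sequence splits into short exact sequences of pure pieces. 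Taking $a=0$ gives that $X_{\gamma}^{[n]}$ is pure. Because $\delta=0$, we also obtain an exact dimension formula
$$\dim H^{i}(X_{\gamma}^{[n]})=\sum_{a=0}^{n}\rp_{r}(n-a)\dim H^{i-2(n-a)}(X_{\gamma}^{\circ,[a]}).$$

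\emph{Global pure implies local pure.} Suppose $X_{\gamma}^{[n]}$ is pure for $n\le m$, and by induction that $X_{\gamma}^{\circ,[n']}$ is pure for $n'<m$. By the direction above, every term in the filtration of $X_{\gamma}^{[m]}$ coming from $a<m$ is pure. The remaining term is $H^{*}(X_{\gamma}^{\circ,[m]})$, sitting at the last stage $X_{\gamma,m}^{[m]}$. We need to show that the pure space $H^{*}(X_{\gamma}^{[m]})$ forces it to be pure.

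The restriction maps $H^{*}(X_{\gamma,a}^{[m]})\to H^{*}(X_{\gamma,a+1}^{[m]})$ are closed restrictions, and their composite $H^{*}(X_{\gamma}^{[m]})\to H^{*}(X_{\gamma}^{\circ,[m]})$ is a restriction to a closed subvariety. Purity of the source alone does not give purity of the target, so we would instead try to prove that these restriction maps are surjective. The difficulty is that $\delta$ lands in $H_{c}^{*+1}$ of the open stratum, whose weights are $\le *+1$ but possibly not exactly $*+1$, so the weight argument does not immediately force $\delta=0$.

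\emph{Main obstacle and proposed fix.} The main obstacle is controlling the lower-weight part in this converse direction. We would first try the following weight argument, organized by descending induction on $a$. Suppose $W_{<*}H^{*}(X_{\gamma,a}^{[m]})\ne 0$ for some $a$. Then $W_{<*}$ of the image of the open-stratum term in $H^{*}(X_{\gamma,a-1}^{[m]})$ is nonzero, because by induction on $n'<m$ the open-stratum cohomology in the relevant degrees is pure, so it has no low-weight classes that could kill these. Propagating this down the filtration, we obtain $W_{<*}H^{*}(X_{\gamma}^{[m]})\ne0$, contradicting purity.

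If this propagation fails, we fall back on a comparison of Poincaré polynomials. Using the Euler characteristic with weights (the E-polynomial), the virtual weight polynomial of $X_{\gamma}^{[m]}$ equals the sum of those of the strata. Purity of $X_{\gamma}^{[m]}$ and of all $X_{\gamma}^{\circ,[a]}$ with $a<m$ determines the Betti numbers of $X_{\gamma}^{\circ,[m]}$ from its E-polynomial and shows that they agree with those in the formula above. A space whose Betti numbers match the weight-graded dimensions of its E-polynomial in this way must be pure.
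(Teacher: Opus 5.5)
\textbf{Gap in the converse direction.} Your sufficiency direction matches the paper's argument: descending induction on $a$ through the sequences (\ref{hilbert exact 0}). The gap is in the converse, and it starts with the obstacle you name, which does not exist.

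For $n=m$ and $a<m$ the connecting map is $\delta\colon H^{i-1}(X_{\gamma,a+1}^{[m]},\ql)\to H^{i-2(m-a)}(X_{\gamma}^{\circ,[a]},\ql)^{\oplus\rp_{r}(m-a)}\otimes\ql(-(m-a))$. Its target is pure of weight $i$ by your inductive hypothesis, because $a<m$. You state this yourself in the preceding paragraph, so ``weights $\le *+1$ but possibly not exactly $*+1$'' contradicts your own setup.

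The weight bound that matters is on the \emph{source}. $X_{\gamma,a+1}^{[m]}$ is closed in the projective scheme $X_{\gamma}^{[m]}$, so by Weil II the group $H^{i-1}(X_{\gamma,a+1}^{[m]},\ql)$ has weights $\le i-1$. Hence $\delta=0$ for every $a<m$, and every restriction $H^{i}(X_{\gamma,a}^{[m]},\ql)\to H^{i}(X_{\gamma,a+1}^{[m]},\ql)$ is surjective. An ascending induction on $a$, starting from $X_{\gamma,0}^{[m]}=X_{\gamma}^{[m]}$ (pure by hypothesis), then shows each $X_{\gamma,a}^{[m]}$ is pure, as a quotient of a pure space. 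The case $a=m$ gives $X_{\gamma}^{\circ,[m]}$. This is the paper's proof. You correctly saw that surjectivity was what you needed, but missed that properness of the closed pieces delivers it.

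\textbf{Your two substitutes.} Neither closes the gap as written.
\begin{enumerate}
\item In the ``propagation'' paragraph, $W_{<*}$ of the image of the open-stratum term is zero, since that term is pure. What you actually need is that each class in $W_{<*}H^{*}(X_{\gamma,a}^{[m]},\ql)$ lifts to $W_{<*}H^{*}(X_{\gamma,a-1}^{[m]},\ql)$. That requires $\delta$ to vanish on it, plus strictness of the weight filtration. But the reason $\delta$ vanishes on low-weight classes (purity of the target) kills $\delta$ on all of $H^{*}(X_{\gamma,a}^{[m]},\ql)$, so the argument collapses to the one above.
\item The E-polynomial fallback fails for three reasons.
\begin{enumerate}
\item Additivity determines the virtual weight polynomial of $X_{\gamma}^{\circ,[m]}$, but that polynomial does not determine Betti numbers or the weights in each degree. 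A weight-$w$ piece in $H^{w+2}$ contributes exactly like one in $H^{w}$, and equal weights in adjacent degrees cancel.
\item Your dimension formula was derived from the vanishing of all connecting maps, which is exactly what is at stake, so comparing with it is circular.
\item The final claim that such a match forces purity is unsupported.
\end{enumerate}
\end{enumerate}
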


\begin{proof}


For the sufficiency, we can show that $X_{\gamma}^{[n]}$ is cohomologically pure by using the long exact sequence (\ref{hilbert exact 0}) inductively, starting from $a=n$ and running downward to $a=0$.  

For the necessity, we proceed by induction on $n'$. The assertion is trivial for $n'=1$ as $X_{\gamma}^{\circ, [1]}=\mathrm{pt}$.
Suppose that $X_{\gamma}^{\circ, [n']}$ are cohomologically pure for $1\le n'\le m'-1$, we need to show that $X_{\gamma}^{\circ, [m']}$ is cohomologically pure.
This is equivalent to the cohomological purity of $X_{\gamma,m'}^{[m']}$, since it coincides with $X_{\gamma}^{\circ, m'}$.
For the last assertion, we proceed by induction to show that $X_{\gamma,a}^{[m']}$ are cohomologically pure for $0\le a\le m'$, this will finish the proof. 
To begin with, $X_{\gamma,0}^{[m']}=X_{\gamma}^{[m']}$ is cohomologically pure by assumption. 
Consider the long exact sequence (\ref{hilbert exact 0}) for $n=m'$ and $0\le a\le m'-1$. 
By assumption, $X_{\gamma}^{\circ, [a]}$ is cohomologically pure, hence the group
$$
H^{i-2(n-a)}(X_{\gamma}^{\circ, [a]}, \ql)^{\oplus \rp_{r}(n-a)}\otimes \ql(-(n-a))
$$
is pure of weight $i$. 
However, the group $H^{i-1}(X_{\gamma, a+1}^{[n]}, \ql)$ is mixed of weight $\le i-1$ by Deligne's Weil-II \cite{weil2}, as $X_{\gamma, a+1}^{[n]}$ is closed in $X_{\gamma}^{[n]}$ which is projective.
This forces the boundary map $\delta$ to be $0$. 
The cohomological purity of $X_{\gamma,a}^{[m']}$ then implies the cohomological purity of $X_{\gamma,a+1}^{[m']}$, and this finishes the induction step.

\end{proof}

\begin{prop}\label{springer n}

If $X_{\gamma}^{\circ, [n]}$ is cohomologically pure for all $n\in \bn$, then $\xx_{\gamma}$ is cohomologically pure. 

\end{prop}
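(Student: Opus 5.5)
We want to show $\xx_{\gamma}$ is cohomologically pure once every $X_{\gamma}^{\circ,[n]}\cong\xx_{\gamma}(n)$ is. The plan is to exhaust $\xx_{\gamma}$ (more precisely each connected component, all of which are isomorphic to $\xx_{\gamma}^{0}$) by the closed subschemes $\xx_{\gamma}(n)$, suitably translated, and to pass the purity to the limit.

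\emph{Exhaustion.} Multiplication by $\varep^{-m}$ (or more generally by elements of $\rT(F)$) is an automorphism of $\xx_{\gamma}$. Any lattice $L$ with $\gamma L\subset L$ satisfies $\varep^{m}L\subset \co[\gamma]$ for $m$ large. Hence the images of $\xx_{\gamma}(n)$ under $\varep^{-m}$ exhaust $\xx_{\gamma}$. Fix the component $\xx_{\gamma}^{k}$ and an integer $m$. The set
$$Y_{m}=\{L\in \xx_{\gamma}^{k}\mid \varep^{m}L\subset \co[\gamma]\}$$
is closed, and $L\mapsto \varep^{m}L$ identifies it with $\xx_{\gamma}(n_{m})$, where $n_{m}=dm+\ind(\co[\gamma])\cdot(\text{const})-k$ is fixed by the index. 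So $Y_{m}$ is cohomologically pure by hypothesis. The $Y_{m}$ form an increasing sequence of projective schemes with $\bigcup_{m}Y_{m}=\xx_{\gamma}^{k}$, so $H^{i}(\xx_{\gamma}^{k},\ql)=\varprojlim_{m}H^{i}(Y_{m},\ql)$.

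\emph{Passing to the limit.} Here lies the main obstacle: an inverse limit of pure groups need not be computed naively. The tool is the lattice action. A lattice $\Lambda\subset\rT(F)$ of rank $r$ (the image of $\rS(F)$ mod units) acts freely on $\xx_{\gamma}$, and $\xx_{\gamma}/\Lambda$ is a projective scheme (Kazhdan–Lusztig). Consequently $\xx_{\gamma}^{k}$ is covered by finitely many $\Lambda$-translates of a fixed $Y_{m_{0}}$, and $H^{i}(\xx_{\gamma})$ is finite dimensional in each degree (again Kazhdan–Lusztig). Since the inverse system $H^{i}(Y_{m})$ consists of finite-dimensional spaces, it is Mittag-Leffler and the limit stabilizes in the sense of images. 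I would then argue that for each $i$ there is an $m$ such that the restriction $H^{i}(\xx_{\gamma}^{k})\to H^{i}(Y_{m})$ is injective. This follows from finite dimensionality together with the fact that the kernels $\ker(H^{i}(\xx_{\gamma}^{k})\to H^{i}(Y_{m}))$ form a decreasing sequence of subspaces with zero intersection, which therefore stabilizes at $0$.

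\emph{Conclusion.} The restriction map is Frobenius-equivariant (all constructions are defined over $\fq$, after spreading out, since $\gamma$ is split-ish over $F$). An injection of $H^{i}(\xx_{\gamma}^{k})$ into the pure group $H^{i}(Y_{m})$ of weight $i$ then forces $H^{i}(\xx_{\gamma}^{k})$ to be pure of weight $i$. This holds for every component and every $i$, so $\xx_{\gamma}$ is cohomologically pure.

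The delicate points are the identification of $Y_{m}$ with a single $\xx_{\gamma}(n)$, which requires checking that the index is constant on a component, and the finite dimensionality used to stabilize the kernels. For the latter I would cite the Kazhdan–Lusztig finiteness of $\xx_{\gamma}/\Lambda$.
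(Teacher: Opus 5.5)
Your exhaustion step is the paper's argument. The paper translates $\xx_{\gamma}(n)$ by the scalar $\lambda_{n}=\varep^{-(N+n)/d}$, with $N=\ind(\co[\gamma])$ and $n\equiv -N \bmod d$, and writes $\xx_{\gamma}^{0}=\varinjlim\lambda_{n}\cdot\xx_{\gamma}(n)$. Your $n_{m}$ should read $dm+k-N$, and the index is constant on $\xx_{\gamma}^{k}$ by definition of that component.

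The step that fails is your justification for passing to the limit. $H^{i}(\xx_{\gamma}^{k},\ql)$ is \emph{not} finite dimensional once $r\ge 2$. Kazhdan--Lusztig's finiteness concerns the quotient $\Lambda\backslash\xx_{\gamma}$, not $\xx_{\gamma}$ itself.
\begin{itemize}
\item Only $\Lambda^{0}$, of rank $r-1$, preserves $\xx_{\gamma}^{k}$.
\item $\xx_{\gamma}^{k}$ is the union of \emph{infinitely} many $\Lambda^{0}$-translates of $Y_{m_{0}}$. Finitely many would make it of finite type, but it has infinitely many irreducible components.
\item Concretely, take $\gamma=\diag(\gamma_{1},\gamma_{2})\in\ggl_{2}(F)$ split with $\val(\gamma_{1}-\gamma_{2})=1$. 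Then $\xx_{\gamma}^{0}$ is an infinite chain of $\bp^{1}$'s and $H^{2}(\xx_{\gamma}^{0},\ql)\cong\prod_{\bz}\ql(-1)$.
\item No restriction map to a finite-dimensional $H^{2}(Y_{m},\ql)$ can be injective. The kernels decrease with zero intersection but never reach $0$.
\end{itemize}
So the embedding of $H^{i}(\xx_{\gamma}^{k})$ into a single pure $H^{i}(Y_{m})$ that you want does not exist.

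The conclusion still holds, and no finiteness is needed. Purity of the ind-scheme is a statement about $H^{i}(\xx_{\gamma}^{k},\ql)=\varprojlim_{m}H^{i}(Y_{m},\ql)$. By Mittag-Leffler, this is the inverse limit, along surjections, of the stable images $\mathrm{Im}\big(H^{i}(\xx_{\gamma}^{k})\to H^{i}(Y_{m})\big)$. These are Frobenius-stable subspaces of spaces pure of weight $i$, so the limit is pure of weight $i$. This is what the paper treats as immediate.

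If you want a finite-stage embedding, run your kernel argument on a finite-dimensional Frobenius-stable subspace $V\subset H^{i}(\xx_{\gamma}^{k},\ql)$ rather than on the whole group. The chain $V\cap\ker\big(H^{i}(\xx_{\gamma}^{k})\to H^{i}(Y_{m})\big)$ stabilizes at $0$, so $V$ embeds equivariantly into some $H^{i}(Y_{m},\ql)$. Dually, in homology $H_{i}(\xx_{\gamma}^{k})=\varinjlim_{m}H_{i}(Y_{m})$, and every class already lives at a finite stage.
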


\begin{proof}

By the isomorphism (\ref{local hilbert springer}), $\xx_{\gamma}(n)\cong X_{\gamma}^{\circ, [n]}$ is also cohomologically pure for all $n\in \bn$.
Let $N=\ind(\co[\gamma])$, for $n$ such that $d|(N+n)$, the translation on the left of $\xx_{\gamma}(n)$ by $$\lambda_{n}:=\diag(\ep^{-(N+n)/d}, \cdots, \ep^{-(N+n)/d})$$ 
is contained in $\xx_{\gamma}^{0}$, and we have
$$
\xx_{\gamma}^{0}=\varinjlim_{n\equiv -N\mod d} \lambda_{n}\cdot \xx_{\gamma}(n).
$$
The cohomological purity of all the $\xx_{\gamma}(n)$ implies the cohomological purity of $\xx_{\gamma}^{0}$, hence of $\xx_{\gamma}$.

\end{proof}

\subsection{Implications of the decomposition theorems}

We are now ready to prove theorem \ref{quotient main}.

\begin{prop}\label{quotient necessity}

If $\xx_{\gamma}$ is cohomologically pure, then $\xx^{0, \xi}_{\gamma}/\rS$ is also cohomologically pure. 

\end{prop}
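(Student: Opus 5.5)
The plan is a weight argument. Since $\xx_{\gamma}^{0,\xi}/\rS$ is universally homeomorphic to $\overline{J}{}^{\,\xi}_{X_{\gamma}}$ by proposition \ref{cj springer stable}, it is proper over $k$. Hence, by Deligne's Weil-II \cite{weil2}, $H^{i}(\xx_{\gamma}^{0,\xi}/\rS,\ql)$ is mixed of weights $\le i$. It therefore suffices to show that every class also has weight $\ge i$. I will get this by exhibiting $H^{i}(\xx_{\gamma}^{0,\xi}/\rS,\ql)$ as a subquotient (ideally a quotient) of $H^{i}(\xx^{0}_{\gamma},\ql)$, which is pure of weight $i$ by hypothesis. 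Since $H^{i}(\xx^{0}_{\gamma},\ql)$ has a single weight, any subquotient is pure of weight $i$, and we are done.

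\emph{Step 1: a finite-type model.} I would replace $\xx^{0}_{\gamma}$ by its quotient $\xx^{0}_{\gamma}/\Lambda^{0}$, or better by a large $\rS(F)$-translation-stable truncation. Here $\Lambda^{0}\cong\bz^{r-1}$ is the degree-zero part of $\rS(F)/\rS(\co)$, which acts freely on $\xx^{0}_{\gamma}$. Following \cite{chen xi} and \cite{chen truncate}, I would use the Harder--Narasimhan type stratification of $\xx^{0}_{\gamma}$ attached to $\xi$. The open stratum is $\xx^{0,\xi}_{\gamma}$. The other strata are indexed by $P=MN\in\cf(M_{0})$, $P\neq G$. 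The stratum indexed by $P$ retracts via $f_{P}$ onto the $\xi$-stable part of $\xx^{M}_{\gamma}$, with fibres that are affine spaces, as in the Iwasawa decomposition. Taking the quotient by $\Lambda^{0}$, or equivalently passing to $\overline{\bp}{}^{\natural}_{X_{\gamma}}/\bbg_{m}^{r}$ via proposition \ref{cj springer}, I obtain the open-closed decomposition
$$
\xx^{0,\xi}_{\gamma}/\rS\ \subset\ (\text{a proper model }\mathcal{Z}) \ \supset\ \mathcal{Z}-\xx^{0,\xi}_{\gamma}/\rS,
$$
where the closed complement is a union of affine-space fibrations over (quotients of) $\xi$-stable parts of Levi affine Springer fibers.

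\emph{Step 2: the restriction map.} I would show that the restriction $H^{i}(\mathcal{Z})\to H^{i}(\xx^{0,\xi}_{\gamma}/\rS)$ is surjective, and that $H^{i}(\mathcal{Z})$ is a quotient of $H^{i}(\xx^{0}_{\gamma})$, for instance through the $\Lambda^{0}$-coinvariants. Surjectivity of the restriction is equivalent to the vanishing of the connecting map $H^{i}(\xx^{0,\xi}_{\gamma}/\rS)\to H^{i+1}_{c}$ of the closed complement. I would prove this vanishing by comparing weights, exactly as in the proof of proposition \ref{hilbert local global}. The closed strata contribute Gysin terms twisted by $\ql(-c)$ with $c\ge 1$ equal to the codimension, and these carry strictly larger weights than the source of the connecting map.

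\emph{Main obstacle.} The hard part is Step 2, namely producing the finite-type proper model $\mathcal{Z}$ and relating its cohomology to that of the ind-scheme $\xx^{0}_{\gamma}$ without assuming any purity for the Levi subgroups. Indeed, proposition \ref{quotient necessity} carries no such hypothesis. As a fallback, I would argue sheaf-theoretically on the deformation. By theorem \ref{cj support} and proposition \ref{quotient globalized}, $H^{*}(\overline{J}{}^{\,\xi}_{X_{\gamma}})$ is the sum of the stalks $\ch^{*}\big(\jj_{!*}\BigWedge^{i}(\ce/\ce^{\perp})\big)_{0}$. The decomposition of $H^{*}(\xx^{0}_{\gamma})$ from \cite{chen decomposition} contains the primitive part (\ref{springer sheaf}). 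Since $\ce/\ce^{\perp}\subset\cf/\ce^{\perp}$ is a direct summand up to semisimplicity of the local monodromy, the stalks for $\ce/\ce^{\perp}$ should embed into the primitive part. Purity of $\xx_{\gamma}$ would then force purity of $\overline{J}{}^{\,\xi}_{X_{\gamma}}$, and hence of $\xx^{0,\xi}_{\gamma}/\rS$. Establishing this embedding is the step I expect to require the most care.
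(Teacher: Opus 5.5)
\textbf{There is a genuine gap.} Your proposal never uses the one idea the paper's proof rests on: passing to $\rS$-\emph{equivariant} cohomology. The paper's argument is short. If $\xx_{\gamma}$ is pure, then so is the quotient stack $[\xx_{\gamma}/\rS]$, via the fibration $[\xx_{\gamma}/\rS]\to B\rS$ whose base is pure. Next, $U=\xx^{0,\xi}_{\gamma}/\rS$ is an open substack of $[\xx_{\gamma}/\rS]$, and it is proper by Proposition~\ref{cj springer stable}, so $H^{*}_{c}(U)=H^{*}(U)$. The composite $H^{*}_{c}(U)\to H^{*}([\xx_{\gamma}/\rS])\to H^{*}(U)$ is then the identity. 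This exhibits $H^{*}(U)$ as a Frobenius-stable direct summand of a pure space. No stratification, no Levi input, and no weight estimate on connecting maps is needed.

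\textbf{Why your main route fails.} You aim at the ordinary cohomology $H^{i}(\xx^{0}_{\gamma})$, and this cannot produce all of $H^{i}(U)$.
\begin{itemize}
\item The only geometric maps relating the two pass through $H^{*}(\xx^{0,\xi}_{\gamma})$. Pullback along the $\rS$-torsor $\xx^{0,\xi}_{\gamma}\to U$ kills its Chern classes. In the toy model $\ba^{2}\supset\ba^{2}\setminus 0\to\bp^{1}$, the generator of $H^{2}(\bp^{1})$ is invisible in $H^{2}(\ba^{2})=0$, but it is hit by $H^{2}_{\bbg_{m}}(\ba^{2})$.
\item The $\Lambda^{0}$-quotient is not a substitute. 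It is not equivalent to $[\overline{\bp}{}^{\natural}_{X_{\gamma}}/\bbg_{m}^{r}]$; it is homeomorphic to $\overline{P}_{C_{\gamma}}$. That space is typically impure even when $\xx_{\gamma}$ is pure: for $\val(\gamma_{1}-\gamma_{2})=1$ in $\ggl_{2}$ it is a nodal rational curve, whose $H^{1}$ has weight $0$.
\item Your Step 2 weight argument needs the local-cohomology (Gysin) terms of the closed strata to have weights $\ge i+1$. That holds when the ambient space and the strata are smooth, as in Kirwan's setting. Here the strata fibre over $\xi$-stable parts of Levi affine Springer fibers, which are singular. The needed lower bound is therefore exactly the Levi purity that is not assumed; in Proposition~\ref{hilbert local global} the analogous purity was an induction hypothesis.
\item If your model $\mathcal{Z}$ were separated, the proper open $U$ would be open and closed in it. So the only workable ambient space is the non-separated stack $[\xx_{\gamma}/\rS]$, which is the paper's object.
\end{itemize}

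\textbf{Why the fallback does not close the gap.} Its key step is open and doubtful. Purity of $\xx_{\gamma}$ controls only the \emph{image} of $\ch^{*}(j_{!*}\BigWedge^{i}\cf)_{0}$ inside $\ch^{*}(j_{!*}\BigWedge^{i}(\cf/\ce^{\perp}))_{0}$. You would need the stalks for $\ce/\ce^{\perp}$ to land inside that image, and even a direct-summand splitting would not give this. Moreover, no such splitting exists. We have $\cf/\ce^{\perp}\cong\ce^{\vee}$, and this is a non-split extension of a trivial rank-$(r-1)$ local system by $\ce/\ce^{\perp}$. The reason is that the Picard--Lefschetz monodromy is not semisimple: already for an $A_{3}$ singularity, the radical $E^{\perp}$ has no invariant complement. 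The paper's introduction in fact speculates the reverse inclusion, with the primitive part contained in the $\ce/\ce^{\perp}$ piece.
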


\begin{proof}

Suppose that $\xx_{\gamma}$ is cohomologically pure. Then the quotient stack $[\xx_{\gamma}/\rS]$ is also cohomologically pure, because we have a fibration $[\xx_{\gamma}/\rS]\to [\mathrm{pt}/\rS]$ with fiber $\xx_{\gamma}$ and $B\rS=[\mathrm{pt}/\rS]$ is cohomologically pure.
As $\xx^{0, \xi}_{\gamma}/\rS$ is open in $[\xx_{\gamma}/\rS]$, we have a morphism
\begin{equation}\label{sub}
H^{*}_{c}(\xx^{0, \xi}_{\gamma}/\rS, \ql)\to H^{*}\big([\xx_{\gamma}/\rS], \ql\big),
\end{equation}
and by restriction we get another morphism
\begin{equation}\label{quotient}
H^{*}\big([\xx_{\gamma}/\rS], \ql\big)\to H^{*}(\xx^{0, \xi}_{\gamma}/\rS, \ql).
\end{equation}
By proposition \ref{cj springer stable}, the quotient $\xx_{\gamma}^{0, \xi}/\rS$ is proper over $k$, hence
$$
H^{*}_{c}(\xx^{0, \xi}_{\gamma}/\rS, \ql)=H^{*}(\xx^{0, \xi}_{\gamma}/\rS, \ql),
$$
and the composite of the morphism (\ref{sub}) and (\ref{quotient}) is identity on $H^{*}(\xx^{0, \xi}_{\gamma}/\rS, \ql)$. Hence $H^{*}(\xx^{0, \xi}_{\gamma}/\rS, \ql)$ appears as a direct summand of $H^{*}\big([\xx_{\gamma}/\rS], \ql\big)$, and the cohomological purity of the latter implies that of the former.

\end{proof}

\begin{rem}
Using the Harder-Narasimhan reduction of $\xx_{\gamma}$ as developed in \cite{chen xi} and \cite{chen truncate}, we can show that if $\xx^{M,\, \xi^{M}}_{\gamma}/\rS$ are cohomologically pure for all the Levi subgroups $M\in \cl(M_{0})$, then the stack $\big[\xx_{\gamma}/\rS\big]$ is cohomologically pure. However, it seems that this statement yields no information about the purity of $\xx_{\gamma}$ itself.

\end{rem}

\begin{proof}[Proof of theorem $\ref{quotient main}$]


By Proposition~\ref{springer n}, together with the isomorphism~(\ref{local hilbert springer}), and Proposition~\ref{quotient necessity}, we have established the following implications of cohomological purity (abbreviated as C. P.):
$$
\text{C. P. of }\xx_{\gamma}(n) \text{ for all n} \Rightarrow \text{C. P. of }\xx_{\gamma}\Rightarrow \text{C.P. of }\xx_{\gamma}^{0,\,\xi}/\rS.  
$$
It remains to prove the other direction.

We proceed by induction on the rank of $G$.  The theorem is trivial for the group $\gl_{1}$.
By assumption, $\xx_{\gamma}^{M}$ are cohomologically pure for all the proper Levi $M\in \cl(M_{0})$. By induction, this implies that all the $\xi$-stable quotients $(\xx_{\gamma}^{M})^{0, \,\xi^{M}}/\rS$ are cohomologically pure. 
Let $M=M_{I_{\bullet}}, I_{\bullet}$ a partition of $\{1,\cdots, r\}$ of length $l\ge 2$, then
$$
(\xx_{\gamma}^{M})^{0, \,\xi^{M}}/\rS=\prod_{j=1}^{l}\Big(\xx_{\gamma_{I_{j}}}^{M_{I_{j}}}\Big)^{0, \,\xi^{M_{I_{j}}}}\big/(\rS\cap M_{I_{j}}).
$$
Applying proposition \ref{cj springer stable} and  theorem \ref{cj support} to each of the above factor, since each one is cohomologically pure, we obtain that for all $q$ and $i$ all the fibers 
$$
\ch^{q}(\jmath_{I, !*}R^{i}\ff_{I,*}^{\xi,\sm}\ql)_{0}=\ch^{q}(\jmath_{I, !*}\BigWedge^{i}R^{1}\ppi^{\sm}_{I,*}\ql)_{0}, \quad I\subsetneq \{1,\cdots,r\},
$$
are pure of weight $q+i$.
If $\xx_{\gamma}^{0, \,\xi}/\rS$ is cohomologically pure, by proposition \ref{cj springer stable} and theorem \ref{cj support} again, all the fibers 
$$
\ch^{q}(\jj_{!*} R^{i}\ff^{\xi,\sm}_{*}\ql)_{0}=\ch^{q}(\jj_{!*}\BigWedge^{i}R^{1}\ppi^{\sm}_{*}\ql)_{0}
$$
are pure of weight $q+i$ for all $q$ and $i$.
Consequently, at the right hand side of the isomorphism in theorem \ref{hilbert}, all the items are pure of the right weight, so $X_{\gamma}^{[n]}$ are cohomologically pure for all $n$. 
By proposition \ref{hilbert local global}, together with the isomorphism (\ref{local hilbert springer}), $X_{\gamma}(n)$ are cohomologically pure for all $n$. 
By proposition \ref{springer n}, $\xx_{\gamma}$ is cohomologically pure. 
This finishes the proof in the other direction.

\end{proof}

\section{Microlocal analysis for the affine Springer fibers}

We move on to compare the new sheaf-theoretic reformulation of the purity of the affine Springer fibers with a previous one obtained in our work \cite{chen decomposition}. For this, we make microlocal analysis of the relevant intersection complexes which appear in the various decomposition theorems.

\subsection{Recall on the microlocal analysis for the compactified Jacobian}\label{cj microlocal}

Let $C_{\gamma}$ be an irreducible rational projective curve with unique singularity isomorphic to $\spec(\co[\gamma])$, let $\overline{P}_{\gamma}$ be its compactified Jacobian. According to Laumon \cite{laumon springer}, we have a universal homeomorphism
$$
\Lambda\backslash \xx_{\gamma}\doteq \overline{P}_{\gamma}.
$$
In particular, they have the same \'etale cohomology.
Let $\pi:(\cc, C_{\gamma})\to (\cb, 0)$ be an algebraization of a miniversal deformation of $C_{\gamma}$, let $f:\overline{\cp}\to \cb$ be the relative compactified Jacobian of $\pi$. Let $j:\cb^{\sm}\to \cb$ be the inclusion of the open subscheme over which $\pi$ and hence $f$ is smooth, let $\pi^{\sm}$ and $f^{\sm}$ be the restriction of $\pi$ and $f$ respectively over the inverse image of $\cb^{\sm}$. 

\begin{thm}[Support theorem of Ng\^o \cite{ngo}]\label{ngo support}

For the family $f:\overline{\cp}\to \cb$, we have
$$
Rf_{*}\ql=\bigoplus_{i=0}^{2\delta_{\gamma}} j_{!*}R^{i}f^{\sm}_{*}\ql[-i].
$$
In particular,
$$
H^{i}(\Lambda\backslash \xx_{\gamma}, \ql)=H^{i}(\overline{P}_{\gamma}, \ql)=\bigoplus_{i'=0}^{i}\ch^{i-i'}(j_{!*}R^{i'}f^{\sm}_{*}\ql)_{0}.
$$

\end{thm}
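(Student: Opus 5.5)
The statement is Ngô's support theorem, specialized to the family $f:\overline{\cp}\to\cb$ attached to the miniversal deformation of the irreducible curve $C_{\gamma}$. The plan is to verify the hypotheses of Ngô's support theorem (in the form for families of compactified Jacobians of integral planar curves), conclude that no perverse constituent of $Rf_{*}\ql$ has support strictly smaller than $\cb$, and then read off the fiber at $0$.

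\textbf{Step 1: smoothness, properness, purity.} Since $C_{\gamma}$ is integral with planar singularities and $\pi$ is an algebraization of a miniversal deformation, the results of Altman--Kleiman and Fantechi--Göttsche--van Straten give that $\overline{\cp}$ is smooth over $k$. The morphism $f$ is proper and flat with irreducible fibers of dimension $p_{a}(C_{\gamma})=\delta_{\gamma}$. By Weil II, $Rf_{*}\ql$ is pure. The decomposition theorem then gives
\[
Rf_{*}\ql=\bigoplus_{i}{}^{p}\ch^{i}(Rf_{*}\ql)[-i],
\]
and each ${}^{p}\ch^{i}(Rf_{*}\ql)$ is a direct sum of intersection complexes $\mathrm{IC}_{Z}(L)$.

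\textbf{Step 2: the group action.} The relative Jacobian $\cp=\pic^{0}_{\cc/\cb}$ is a smooth commutative group scheme acting on $\overline{\cp}$. Its stabilizers are affine. Moreover, the action makes $\overline{\cp}$ a "$\delta$-regular" abelian fibration in Ngô's sense: the locus where the affine part of $\cp_{b}$ has dimension $\ge\delta$ has codimension $\ge\delta$. This codimension estimate is the main obstacle. I would deduce it from the fact that the $\delta$-constant stratum of a versal deformation of a planar singularity with $\delta$-invariant $\delta$ has codimension $\delta$ (Teissier, Diaz--Harris), using that the affine part of $\cp_{b}$ has dimension $\delta(C_{b})$.

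\textbf{Step 3: the support inequality.} Given Step 2, Ngô's freeness argument applies: the cap product action of the Tate module of $\cp$ on $\bigoplus_{i}{}^{p}\ch^{i}$, combined with Poincaré duality, yields the inequality $\mathrm{codim}\,Z\le\delta_{Z}$ for every support $Z$. Together with the estimate of Step 2 this forces equality. Because the fibers of $f$ are irreducible, the top cohomology of a generic fiber over $Z$ is of rank one. This contradicts the freeness argument unless $Z=\cb$. Hence every constituent is an intermediate extension of its restriction to $\cb^{\sm}$, where the complex is $\bigoplus_{i}R^{i}f^{\sm}_{*}\ql[-i]$. This gives the first formula.

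\textbf{Step 4: the stalk at $0$.} Taking the stalk at $0$ gives $H^{i}(\overline{P}_{\gamma},\ql)=\bigoplus_{i'}\ch^{i-i'}(j_{!*}R^{i'}f^{\sm}_{*}\ql)_{0}$ by proper base change. The identification with $H^{i}(\Lambda\backslash\xx_{\gamma},\ql)$ comes from Laumon's universal homeomorphism $\Lambda\backslash\xx_{\gamma}\doteq\overline{P}_{\gamma}$, since universal homeomorphisms induce isomorphisms on étale cohomology.
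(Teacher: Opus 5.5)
Your proposal is correct and follows the same route as the paper, which gives no argument of its own: it simply invokes Ngô's support theorem for $f:\overline{\cp}\to\cb$, and it identifies the stalk at $0$ with the cohomology of $\Lambda\backslash\xx_{\gamma}$ via Laumon's universal homeomorphism $\Lambda\backslash\xx_{\gamma}\doteq\overline{P}_{\gamma}$. Your checklist of hypotheses is the standard justification for that citation: smooth total space, affine stabilizers, $\delta$-regularity from the codimension of $\delta$-constant strata in a family that is versal at every point near $0$, and irreducible fibers, so that the rank-one top cohomology leaves no room for supports other than $\cb$.
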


In this section, we recall our work \cite{chen root valuation} on the microlocal analysis of the fiber $\ch^{*}(j_{!*} R^{i}f^{\sm}_{*}\ql)_{0}$ with the stratified Morse theory of Goresky and MacPherson \cite{gm morse}.
For simplicity, we denote $\cf=R^{1}\pi^{\sm}_{*}\ql=R^{1}f^{\sm}_{*}\ql$ and $\cf^{i}=\BigWedge^{i}\cf$ for $i=0, \cdots, 2\delta_{\gamma}$. 
We assume that $\dim(\cb)=\tau_{\gamma}>\delta_{\gamma}$, which excludes only the case that $\gamma=\diag(\gamma_{1}, \gamma_{2})\in \ggl_{2}(F)$ such that $\val(\gamma_{1}-\gamma_{2})=1$, for which the geometry of $\xx_{\gamma}$ is well known. 
Let $\big(\{\cc_{\alpha}\}_{\alpha\in \overline{\Omega}}, \{\cb_{\beta}\}_{\beta\in \Omega}\big)$ be the canonical Whitney stratification for the family $\pi:\cc\to \cb$ (cf. \cite{mather stratification}, proposition 10.1). 
Then the collection of the strict $\delta$-strata is part of the Whitney stratification $\{\cb_{\beta}\}_{\beta\in \Omega}$.

Let $H_{0}$ be a generic linear subspace of $\cb=\ba^{\tau_{\gamma}}$ of dimension $\delta_{\gamma}+1$ such that it intersects transversally with the tangent cone at $0$ of all the $\delta$-strata. We then cut off the locus where its intersection with the union of all the $\delta$-strata is not transversal, to get a dense open subscheme $\cb'\subset H_{0}$ containing $0$. 
Consider the restriction of the family $f$ to the inverse image of $\cb'$:
$$
f':\overline{\cp}'=\overline{\pic}{}^{0}_{\cc'/\cb'}\to \cb'.
$$
We show that the subspace $\overline{\cp}{}'$ is nonsingular over $\bc$, and Ng\^o's support theorem holds for $f'$, i.e.
\begin{equation*}
Rf'_{*}\ql=\bigoplus_{i=0}^{2\delta_\gamma} j'_{!*}(R^{\,i}{f'}^{\sm}_{*}\ql)[-i],
\end{equation*}
where $j':(\cb')^{\sm}\to \cb'$ is the natural inclusion. 
Consequently, let $B_{0}(\ep)$ be a sufficiently small open ball around $0\in \cb=\ba^{\tau_{\gamma}}$, we have the decomposition
\begin{equation*}
H^{i}(\overline{P}_{C_{\gamma}}, \ql)=\bigoplus_{i'=0}^{i}\ch^{i-i'}\big(j'_{!*}\cf^{i'}\big)_{0}=\bigoplus_{i'=0}^{i}H^{i-i'}\big(\cb'\cap B_{0}(\ep), j'_{!*}\cf^{i'}\big), \quad i=0,\cdots, 2\delta_{\gamma}.
\end{equation*}

We can make further analysis of the cohomologies at the right hand side of the above equation. 
We take a flag of generic linear subspaces
$$
\{0\}= H_{\delta_{\gamma}+1}\subsetneq H_{\delta_{\gamma}}\subsetneq \cdots \subsetneq H_{2}\subsetneq H_{1}\subsetneq H_{0}, \quad \mathrm{codim}_{H_{0}}(H_{k})=k \text{ for all }1\le k \le \delta_{\gamma}+1,
$$
for which two properties concerning the local polar varieties of the relevant Whitney strata, proposition 4.11 and 4.14 of \cite{chen root valuation}, hold. 
For $k=1, \cdots, \delta_{\gamma}$, let $\pi_{k}:H_{0}\to \bc^{k}$ be the projection with kernel $H_{k}$, let $t_{k}\in \bc^{k}$ be a geometric generic point sufficiently close to $0$. The projection $\pi_{1}$ induces a generic projection
$
p_{1}:\cb'\cap B_{0}(\ep)\to \bc^{1}, 
$
and the projection $\pi_{k+1}$ induces an affine map 
$$
p_{k+1}:\cb'\cap B_{0}(\ep)\cap\pi_{k}^{-1}(t_{k})\to \bc^{1}, \quad \text{for }k=1,\cdots, \delta_{\gamma}.
$$ 
The points $\{t_{k}\}_{k=1}^{\delta_{\gamma}}$ have been taken such that $\cb'\cap B_{0}(\ep)\cap \pi_{k+1}^{-1}(t_{k+1})$ coincides with the fiber of $p_{k+1}$ at a generic point in ${\rm Im}(p_{k+1})$. 
For simplicity, we denote 
$$F^{k}=\begin{cases} \cb'\cap B_{0}(\ep),& \text{for }k=0,\\
\cb'\cap B_{0}(\ep)\cap\pi_{k}^{-1}(t_{k}), &\text{for }k=1,\cdots, \delta_{\gamma},\end{cases}
$$ 
it is of dimension $\delta_{\gamma}+1-k$. 
Note that $F^{\delta_{\gamma}}$ is a generic line in $\cb'\cap B_{0}(\epsilon)$, and a generic geometric point on it is the same as a generic geometric point of $\cb'\cap B_{0}(\epsilon)$, we will denote it by $\bar{\xi}$.  
We then replace the projection $p_{k+1}$ by a stratified Morse function 
$$
\varphi_{k+1}:F^{k}\to \br^{1}, \quad k=0, \cdots, \delta_{\gamma},
$$ 
with respect to the stratification of $F^{k}$ induced from the  canonical Whitney stratification $\{\cb_{\beta}\}_{\beta\in \Omega}$, which approximates sufficiently well to the function ${\rm Re}(p_{k+1})$. 
Without loss of generality, we can assume that the critical points of $\varphi_{k+1}$ have distinct critical values.
For $r\in \br$, let $F^{k}_{<r}=\varphi_{k+1}^{-1}(-\infty, r)$.
Let $v_{k}$ be the smallest critical value of $\varphi_{k+1}$, then
$$
H^{*}\big(F^{k}_{<v_{k}},  j'_{!*}\cf^{i} \big)= \begin{cases}
\cf^{i}_{\bar{\xi}}, &\text{ for }k=\delta_{\gamma},\\
H^{*}\big(F^{k+1}, j'_{!*}\cf^{i}\big), &\text{ for }k=\delta_{\gamma}-1,\cdots, 0,\end{cases}
$$
and the cohomology group $H^{*}\big(F^{k},  \jj'_{!*}\cf^{i} \big)$ can be built up from it and the Morse groups at the critical points $v$ of $\varphi_{k+1}$ via the long exact sequence
{\small
\begin{equation*}
\cdots \to H^{*-1}\big(F^{k}_{<v-a}, j_{!*}\cf^{i}\big) \xrightarrow{\partial} H^{*}\big(F^{k}_{<v+a}, F^{k}_{<v-a}; j_{!*}\cf^{i}\big)  \to  H^{*}\big(F^{k}_{<v+a},  j_{!*}\cf^{i}\big) \to H^{*}\big(F^{k}_{<v-a}, j_{!*}\cf^{i}\big) \to \cdots.
\end{equation*}
}

\noindent In this way, we can compute inductively the cohomologies $H^{*}\big(F^{k},  j'_{!*}\cf^{i} \big)$, starting from $\cf^{i}_{\bar{\xi}}$ and arriving at $H^{*}\big(F^{0},  j'_{!*}\cf^{i} \big)=\ch^{*}(j_{!*}\cf^{i})_{0}$.

\begin{thm}[\cite{chen root valuation}, prop. 3.8]\label{morse group}

Let $x\in F^{k}\cap\cb_{\beta}$ be a critical point of $\varphi_{k+1}$ with critical value $v$, let $\lambda$ be the Morse index of the restriction of $\varphi_{k+1}$ to $F^{k}\cap\cb_{\beta}$.
Let $a$ be a sufficiently small positive real number.

\begin{enumerate}[topsep=0pt, itemsep=0pt, label=$(\arabic*)$]

\item

If $\cb_{\beta}$ is not one of the strict $\delta$-strata, then
$$
H^{*}\big(F^{k}_{<v+a}, F^{k}_{<v-a}; j'_{!*}\cf^{i}\big)=0,\quad \text{for all }i.
$$

\item

If $\cb_{\beta}$ is one of the strict $\delta$-strata $\cb_{\delta}^{\circ}$ with $\delta\ge 1$, let $\widetilde{\cc}_{x}$ be the normalization of $\cc_{x}$ and let 
$$
H^{*}(\jac_{\widetilde{\cc}_{x}}, \ql)=\bigoplus_{n=0}^{2(\delta_{\gamma}-\delta)}\boldsymbol{\Lambda}_{x}^{n}[-n],
$$ 
then for all $i$ we have
$$
H^{q}\big(F^{k}_{<v+a}, F^{k}_{<v-a}; j'_{!*}\cf^{i}\big)=
\begin{cases}
\boldsymbol{\Lambda}_{x}^{{i-\delta}}, & \text{ if }q=\lambda+\delta,
\\
0,&\text{otherwise}.
\end{cases}
$$

\end{enumerate}

\end{thm}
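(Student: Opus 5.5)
The plan is to compute the relative cohomology $H^{*}\big(F^{k}_{<v+a}, F^{k}_{<v-a}; j'_{!*}\cf^{i}\big)$ by the stratified Morse theory of Goresky--MacPherson. The Morse group at $x$ splits as a tangential part and a normal part. The tangential part comes from the restriction of $\varphi_{k+1}$ to the stratum $F^{k}\cap\cb_{\beta}$, which is a nondegenerate critical point of index $\lambda$; it contributes a shift by $\lambda$. The normal part is the cohomology of the normal Morse data: the pair (complex link $\mathcal{L}_{x}$, its boundary) inside a normal slice $N_{x}$ to $\cb_{\beta}$ at $x$. Thus
$$
H^{q}\big(F^{k}_{<v+a}, F^{k}_{<v-a}; j'_{!*}\cf^{i}\big)\cong H^{q-\lambda}\big(N_{x}\cap B_{x}(\ep'), \mathcal{L}_{x}; j'_{!*}\cf^{i}\big).
$$
This is the vanishing-cycle group of $j'_{!*}\cf^{i}|_{N_{x}}$ along a generic linear form at $x$. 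The whole problem therefore reduces to identifying the restriction of the intersection complex to a normal slice.

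Next I would use the local structure of the family near $x$. Since $x$ is sufficiently close to $0$, the fiber $\cc_{x}$ has finitely many singular points with total $\delta$-invariant $\delta(\cc_{x})$, and by versality the germ of $\cb$ at $x$ is smoothly equivalent to the product of the miniversal deformations of these singularities. Suppose $\cb_{\beta}$ is not a strict $\delta$-stratum. By the construction of the canonical Whitney stratification, the only strata on which the local system $\cf$ degenerates (in the sense of the support theorem) are the $\delta$-strata. On the other strata, the complex $Rf'_{*}\ql$ is locally constant transversally in a suitable sense: for instance, strata of $\mu$-constancy inside a $\delta$-stratum, or strata where $\pi$ is not equisingular while the Jacobian family still is topologically trivial. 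One then shows the normal Morse data is acyclic. Concretely, on a normal slice to such a stratum the perverse sheaf $j'_{!*}\cf^{i}$ is the restriction of a complex pulled back along a smooth map from a neighbourhood in the closure of the $\delta$-stratum containing $\cb_{\beta}$. This forces the vanishing cycles for a generic linear form to vanish, which proves $(1)$.

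For $(2)$, let $\cb_{\beta}=\cb^{\circ}_{\delta}$. Locally at $x$, the family $f'$ factors as follows. The normalization $\widetilde{\cc}_{x}$ has genus $\delta_{\gamma}-\delta$, and $\cf$ near $x$ fits into a sequence
$$
0\to \cf_{\rm van}\to\cf\to \cf_{\rm ab},
$$
where $\cf_{\rm ab}$ extends as the constant system $H^{1}(\widetilde{\cc}_{x})$ along the stratum. The normal slice $N_{x}$ has dimension $\delta$, and the restriction of the family to it is a product of miniversal deformations of the nodal-type singularities of $\cc_{x}$; since $x$ lies in the strict $\delta$-stratum and we have transversality, generically these are $\delta$ nodes. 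Then $j'_{!*}\cf^{i}|_{N_{x}}$ decomposes via Künneth as
$$
\bigoplus_{m}\boldsymbol{\Lambda}^{i-m}_{x}\otimes j_{!*}\BigWedge^{m}\cf_{\rm van}.
$$
For $\delta$ nodes, the vanishing local system is a product of rank one systems with unipotent monodromy on $(\bc^{*})^{\delta}$ after pairing. The vanishing cycles of $j_{!*}\BigWedge^{m}$ along a generic linear form at the origin of $\bc^{\delta}$ vanish except for $m=\delta$, where they are one-dimensional, placed in degree $\delta$. This gives $\boldsymbol{\Lambda}_{x}^{i-\delta}$ in degree $q=\lambda+\delta$.

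I expect the main obstacle to be this last computation, together with the identification of the normal slice: the local product decomposition of $j'_{!*}\cf^{i}$, and the check that the stalk and costalk conditions give exactly one nonzero Morse degree. To attack it, I would first treat the case $\delta=1$ of a single node by Picard--Lefschetz, and then proceed by induction on $\delta$, using the product structure and the Künneth formula for Morse data.
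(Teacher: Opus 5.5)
\paragraph{Case (1): the gap.} Your reduction of the Morse group to vanishing cycles of $j'_{!*}\cf^{i}|_{N_{x}}$ along a generic linear form is the paper's first step as well, but your argument for $(1)$ does not work. You claim that on a normal slice to a non-strict stratum, $j'_{!*}\cf^{i}$ is pulled back along a smooth map. This already fails in the simplest case. Suppose $\cc_{x}$ has a single cusp. Then $N_{x}$ is the two-dimensional miniversal deformation of $A_{2}$, and its discriminant is a cuspidal curve. The intermediate extension of the rank-two vanishing local system has this singular curve as its non-lisse locus. So it cannot be pulled back along a smooth map with positive-dimensional fibres, which is what you would need to kill the vanishing cycles of a generic linear form. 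Yet the Morse group there does vanish.

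Invoking the support theorem does not fill this hole. It only says that each $j_{!*}\cf^{i}$ has full support, and that is compatible with the nonzero Morse groups on strict $\delta$-strata in $(2)$. The vanishing in $(1)$ is a microlocal statement, and it needs geometric input from the total space of the Jacobian family, which you never use. The paper's argument runs as follows:
\begin{itemize}
\item Since $\cb_{\beta}$ is not a strict $\delta$-stratum, its codimension exceeds $\delta(\cc_{x})$, so a generic hyperplane section $\rp^{-1}(a)\subset N_{x}$ still has dimension $\ge\delta(\cc_{x})$.
\item By the regularity criterion of Fantechi--G\"ottsche--van Straten / Migliorini--Shende--Viviani, the total space of the relative compactified Jacobian over $\rp^{-1}(a)$ is then regular. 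Hence $\rp\circ f|_{N_{x}}$ is smooth near $f^{-1}(x)$.
\item Local acyclicity and proper base change give $(R\Phi_{\bar{\eta}_{0}}Rf_{*}\ql)_{x}=0$.
\item Ng\^o's support theorem $Rf_{*}\ql=\bigoplus_{i}j_{!*}\cf^{i}[-i]$ transfers this vanishing to each summand.
\end{itemize}

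\paragraph{Case (2): a different route.} Here your approach genuinely differs from the paper's and can be made to work, but you must describe the local system correctly. The paper again works on the total space. Near $\bar{c}=(c,\dots,c)$ the map $\rp\circ f$ is the quadratic singularity $\sum_{i=1}^{\delta}x_{i}y_{i}$, with vanishing cycle $\ql[-2\delta+1]$. Ng\^o's freeness tensors this with $H^{*}(\jac_{\widetilde{\cc}_{x}},\ql)$, and the support theorem sorts the result into $(R^{\delta-1}\Phi_{\bar{\eta}_{0}}j_{!*}\cf^{i})_{x}=\boldsymbol{\Lambda}_{x}^{i-\delta}$.

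You instead compute directly on the base, which avoids the support theorem in this case. Three corrections are needed:
\begin{itemize}
\item Each node contributes a rank-two unipotent Jordan block $\mathcal{L}_{j}$, spanned by its vanishing cycle $\alpha_{j}$ and a dual class $\beta_{j}$. It is not a rank-one system.
\item You need an honest splitting $\cf|_{N_{x}\cap\cb^{\sm}}\cong\underline{H^{1}(\widetilde{\cc}_{x},\ql)}\oplus\bigoplus_{j}\mathrm{pr}_{j}^{*}\mathcal{L}_{j}$. Take $W=\langle\alpha_{j},\beta_{j}\rangle_{j}$ and its symplectic complement, which is monodromy-stable and isomorphic to $H^{1}(\widetilde{\cc}_{x},\ql)$.
\item Your non-split sequence is not enough. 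If $\cf_{\rm van}$ were only the span of the $\alpha_{j}$, it would be constant and you would get zero.
\end{itemize}
With these in place the computation goes through. The constant sheaf $\ql$ on a factor $\bc$ has no vanishing cycles, while $R\Phi(j_{*}\mathcal{L}_{j})_{0}\cong\ql$ sits in degree $0$. Thom--Sebastiani applied to the summands $\boxtimes_{j}\BigWedge^{m_{j}}\mathcal{L}_{j}$ of $\BigWedge^{m}$ leaves only the term with $m_{j}=1$ for all $j$, i.e.\ $m=\delta$, in degree $\delta-1$. This yields $\boldsymbol{\Lambda}_{x}^{i-\delta}$ in degree $q=\lambda+\delta$ of the Morse group.
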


\begin{proof}

We make a brief recall of the proof. 
Let $N_{x}$ be a transversal slice to $\cb_{\beta}$ at $x$ in $\cb$, let $\rp:(N_{x}, x)\to (\ba^{1}, 0)$ be a generic projection. 
Let $\bar{\eta}_{0}$ be a geometric generic point of $\ba^{1}_{\{0\}}$, let $R\Phi_{\bar{\eta}_{0}}$ be the vanishing cycle functor with respect to $\rp$.
By the stratified Morse theory of Goresky and MacPherson \cite{gm morse}, up to a shift by the Morse index, the Morse group in question is equal to the vanishing cycle
$$
(R^{q}\Phi_{\bar{\eta}_{0}}(j_{!*}R^{i}f_{*}^{\sm}\ql))_{x}.
$$

In case $(1)$, we can show that the composite $\rp\circ f|_{N_{x}}: f^{-1}(N_{x})\to \ba^{1}$ is smooth at the vicinity of $0\in \ba^{1}$. The local acyclicity of the smooth morphism then implies that
$$
R\Phi'_{\bar{\eta}_{0}}\bq_{\ell}\big|_{f^{-1}(x)}=0,
$$
where $R\Phi'_{\bar{\eta}_{0}}$ is the vanishing cycle functor for the morphism $\rp\circ f|_{N_{x}}$. 
By proper base change theorem, this implies
$
(R\Phi_{\bar{\eta}_{0}}Rf_{*}\ql)_{x}=0. 
$
With the support theorem of Ng\^o's, we get
$$
(R^{q}\Phi_{\bar{\eta}_{0}}j_{!*}R^{i}f^{\sm}_{*}\ql)_{x}=0, \quad \text{for all } q \text{ and }i.
$$
This finishes the sketch of proof in case $(1)$.

In case $(2)$, the fiber $\cc_{x}$ is an irreducible projective curve with $\delta$ nodes. The restriction of $\pi:\cc\to \cb$ to the inverse image of $N_{x}$ is then a miniversal deformation of all the nodes on $\cc_{x}$. 
As explained in \cite{chen root valuation}, the restriction of the family $f:\overline{\cp}\to \cb$ to the inverse image of $N_{x}$ can be described as follows:
Let $C_{xy}$ be the irreducible projective rational curve over $\bc$ with a unique ordinary double point as singularity, let $\overline{P}_{xy}$ be its compactified Jacobian, then $\overline{P}_{xy}\cong C_{xy}$.   
With the gluing construction of Laumon \cite{laumon springer}, we have a fibration $\overline{\jac}_{\cc_{x}}\to \jac_{\widetilde{\cc}_{x}}$ with fiber homeomorphic to the $\delta$-fold product $\overline{P}_{xy}\times \cdots \times\overline{P}_{xy}$. For simplicity, we denote it by
\begin{equation}\label{devi j}
\begin{tikzcd}\underbrace{\overline{P}_{xy}\times \cdots \times\overline{P}_{xy}}_{\delta} \arrow[r] &\overline{\jac}_{\cc_{x}}\arrow[d]\\
& \jac_{\widetilde{\cc}_{x}}.
\end{tikzcd}
\end{equation}
Let $\hat{f}_{xy}:\widehat{\cp}_{xy}\to \widehat{\ba}_{0}^{1}$ be the miniversal deformation of $\overline{P}_{xy}$. 
The composite $p_{k+1}\circ f:=p_{k+1}\circ (f|_{f^{-1}(N_{x})})$ is then equivalent to
\begin{equation}\label{local model 0}
\underbrace{\widehat{\cp}_{xy}\times \cdots \times\widehat{\cp}_{xy}}_{\delta}
\xrightarrow{\hat{f}_{xy}\times\cdots\times \hat{f}_{xy}} \underbrace{\widehat{\ba}_{0}^{1}\times \cdots \times\widehat{\ba}_{0}^{1}}_{\delta}
\xrightarrow{\sum_{i=1}^{\delta}a_{i}} \widehat{\ba}_{0}^{1}.
\end{equation}
As $\overline{P}_{xy}\cong C_{xy}$, the restriction of the deformation $\hat{f}_{xy}:\widehat{\cp}_{xy}\to \widehat{\ba}_{0}^{1}$ to an open neighbourhood of the singular point can be described explicitly as $\bc^{2}\to \bc: (x,y)\mapsto xy$,
and the above composite is nothing but
\begin{equation*}
\bc^{2\delta}\to \bc,\quad (x_{1}, y_{1}, \cdots,x_{\delta},y_{\delta})\mapsto \sum_{i=1}^{\delta}x_{i}y_{i},
\end{equation*}
which describes the miniversal deformation of an ordinary quadratic singularity. 
Let $R\overline{\Phi}_{\bar{\eta}_{0}}$ be the vanishing cycle functor for the composite (\ref{local model 0}), then $R\bar{\Phi}_{\bar{\eta}_{0}}\ql$ is supported at the point $\bar{c}:=(c,\cdots,c)$ with $c$ being the unique double point of $\overline{P}_{xy}$, and that
$$
(R\bar{\Phi}_{\bar{\eta}_{0}}\ql)_{\bar{c}}=\ql[-2\delta+1].
$$
With the fibration (\ref{devi j}) and the freeness property in \cite{ngo}, proposition 7.4.10, we obtain that 
$$
(R\Phi_{\bar{\eta}_{0}}Rf_{*}\ql)_{x}
=\boldsymbol{\Lambda}_{x}^{\bullet}[-2\delta+1].
$$
By the support theorem of Ng\^o, we conclude
$$
(R^{\delta-1}\Phi_{\bar{\eta}_{0}}(j_{!*}\cf^{i}))_{x}
=\boldsymbol{\Lambda}_{x}^{{i-\delta}},
$$ 
and all the other vanishing cycles vanish.

\end{proof}

\begin{rem}
For $i=1, \cdots, 2\delta_{\gamma}-1$, let $\bar{\imath}=\min\{i, 2\delta_{\gamma}-i\}$, the theorem implies that the critical points on the $\delta$-strata for $\delta>\bar{\imath}$ makes no contribution to the Morse groups of $j_{!*}\cf^{i}$, so
$$
\ch^{*}(j_{!*}\cf^{i})_{0}=H^{*}(F^{\delta_{\gamma}+1-\bar{\imath}}, j_{!*}\cf^{i}). 
$$
It is enough to iterate the above process for $k=\delta_{\gamma},\cdots, \delta_{\gamma}+1-\bar{\imath}$.

\end{rem}

\subsection{Microlocal analysis for the primitive part}\label{microlocal primitive}

According to Laumon \cite{laumon springer}, the affine Springer fiber $\xx^{0}_{\gamma}$ is homeomorphic to a $\Lambda^{0}$-torsor $\overline{P}_{C_{\gamma}}^{\natural}$ of the compactified Jacobian $\overline{P}_{C_{\gamma}}$. 
In our work \cite{chen decomposition}, for any $n\in \bn$, we consider $\Lambda^{0}/n$-coverings $\overline{P}_{n}$ of $\overline{P}_{C_{\gamma}}$, and their miniversal $\Lambda^{0}/n$-equivariant deformations $f_{n}:\overline{\cp}_{n}\to \cb_{n}$, where $\varpi_{n}:\cb_{n}\to \cb^{\circ}$ is an \'etale morphism onto an open subscheme $\cb^{\circ}\subset \cb$. 
They form a projective system and can be regarded as a substitute for the deformation of $\xx_{\gamma}$. 
By construction, we have
\begin{equation*}
H^{*}(\xx^{0}_{\gamma}, \ql)=H^{*}(\overline{P}_{C_{\gamma}}^{\natural}, \ql)=\varprojlim_{m}\varinjlim_{n}H^{*}(\overline{P}_{n}, \bz/\ell^{m}).
\end{equation*}
Let $j:\cb^{\sm}\to \cb$ be the inclusion of the open subscheme over which $\pi$ is smooth, let $\pi^{\sm}$ be the restriction of $\pi$ to the inverse image of $\cb^{\sm}$. 
Let $\cf=R^{1}\pi^{\sm}_{*}\ql$ and $\ce\subset \cf|_{U}$ the local system of vanishing cycles for the singularity of $C_{\gamma}$, where $U$ is a sufficiently small open neighbourhood of $0$, let $\ce^{\perp}$ be the orthogonal complement of $\ce$ with respect to the cup product of $\cf$. 
We obtain a decomposition of $Rf_{n, *}\ql$ as a direct sum of intersection complexes, and deduce from it that 
\begin{align*}
H^{i}\big({\xx}_{\gamma}^{0},\bq\big)=&
\bigoplus_{i'=0}^{i} 
{\rm Im}\Big\{\ch^{i-i'}\big(j_{!*}\BigWedge^{i'}\cf\big)_{0}\to  
\ch^{i-i'}\big(j_{!*}\BigWedge^{i'}(\cf/\ce^{\perp})\big)_{0} \Big\}\oplus \\
&
\bigoplus\text{ Similar terms for the Levi subgroups}. 
\end{align*}
The cohomological purity of $\xx_{\gamma}$ is then equivalent to that of its primitive part
\begin{equation*}
\bigoplus_{i'=0}^{i} 
{\rm Im}\Big\{\ch^{i-i'}\big(j_{!*}\BigWedge^{i'}\cf\big)_{0}\to  
\ch^{i-i'}\big(j_{!*}\BigWedge^{i'}(\cf/\ce^{\perp})\big)_{0} \Big\}.
\end{equation*}
We will make a microlocal analysis of it.
But the above expression is difficult to work with, as we know nothing about the intersection complex $j_{!*}\BigWedge^{i}(\cf/\ce^{\perp})$.
Instead, we go back to the family $f_{n}:\overline{\cp}_{n}\to \cb_{n}$. 
By proposition 5.10 of \cite{chen decomposition}, the primitive part is actually a limit:
\begin{equation}\label{primitive as limit}
{\rm Im}\Big\{\ch^{*}\big(j_{!*}\BigWedge^{i}\cf\big)_{0}\to  
\ch^{*}\big(j_{!*}\BigWedge^{i}(\cf/\ce^{\perp})\big)_{0} \Big\}=\Big(\varprojlim_{m}\varinjlim_{n} \ch^{*}(j_{n, !*}R^{i}f^{\sm}_{n, *}\bz/\ell^{m})_{0_{n}}\Big)\otimes_{\bz_{\ell}}\ql,
\end{equation}  
where $j_{n}:\cb_{n}^{\sm}\to \cb_{n}$ is the inclusion of the open subscheme over which $f_{n}$ is smooth.
By construction, $\varpi_{n}:\cb_{n}\to \cb^{\circ}$ is \'etale and $0_{n}\in \cb_{n}$ is the point lying over $0\in \cb_{n}$. 
Hence the iterated projections for the base $\cb$ can be carried over to $\cb_{n}$, and we can make microlocal analysis of $j_{n, !*}R^{i}f^{\sm}_{n, *}\bz/\ell^{m}$ with the same process as explained in \S\ref{cj microlocal}. 
Take limit, we get the Morse groups for the primitive part. 

 \subsubsection{Finite abelian coverings of the compactified Jacobian}\label{recall finite covering}
 
Let $C$ be an irreducible projective curve with planar singularities at $z_{j}, {j\in J}$. Following ideas of Laumon \cite{laumon springer}, we give a concrete description of the finite abelian coverings of the compactified Jacobian $\overline{P}_{C}$.  

To begin with, we recall Laumon's description of $\overline{P}_{C}$. 
Let $\phi:\widetilde{C}\to C$ be the normalization of $C$, let $\phi^{-1}(z_{j})=\{\tilde{z}_{i}\mid i\in I_{j}\}$, then the local branches of $C$ at $z_{j}$ are naturally parametrized by $I_{j}$. 
For each $j\in J$, let $A_{I_{j}}$ be the completed local ring of $C$ at $z_{j}$, let $E_{I_{j}}$ be the ring of fractions of $A_{I_{j}}$, let $\xx_{I_{j}}$ be the affine Springer fiber which parametrizes all the $A_{I_{j}}$-fractional ideals in $E_{I_{j}}$. It is equipped with the natural action of the $k$-group scheme $G_{I_{j}}$ such that $G_{I_{j}}(k)=E_{I_{j}}^{\times}/A_{I_{j}}^{\times}$. Let
\begin{equation}\label{springer notation}
\xx(C)=\prod_{j\in J} \xx_{I_{j}},\quad G(C)=\prod_{j\in J} G_{I_{j}},\quad I=\bigsqcup_{j\in J} I_{j}, \quad \Lambda(C)=\prod_{j\in J}\bz^{I_{j}}.
\end{equation}
The group $G(C)$ admits a d\'evissage
$$
1\to G^{0}(C)\to G(C)\to \Lambda(C)\to 0,
$$
which at the level of $k$-points is
$$
1\to \prod_{j\in J} \co_{E_{I_{j}}}^{\times}/A_{I_{j}}^{\times} \to 
\prod_{j\in J} E_{I_{j}}^{\times}/A_{I_{j}}^{\times}\to \prod_{j\in J}\bz^{I_{j}}\to 1.
$$
Then $G(C)$ acts on $\xx(C)$ with a dense open orbit such that we can consider $\xx(C)$ as a compactification of $G(C)$. 
On the other hand, $G^{0}(C)$ appears as the maximal connected affine subgroup of the Jacobian $\jac_{C}$ of $C$, and we have the d\'evissage of Chevalley
\begin{equation}\label{chevalley cj curve}
1\to G^{0}(C)\to \jac_{C}\xrightarrow{\phi^{*}} \jac_{\widetilde{C}}\to 1.
\end{equation}
Let $\overline{P}_{C}^{\natural}$ be the moduli space of simple torsion-free coherent sheaves $\cm$ over $C$ of generic rank $1$ and degree $0$, equipped with local rigidification $\cm|_{\spf(E_{I_{j}})}\cong E_{I_{j}}, j\in J$. 
Let $\xx^{0}(C)$ be the central connected component of $\xx(C)$, then we have a $k$-morphism
\begin{equation*}
\xx^{0}(C)\to \overline{P}_{C}^{\natural}
\end{equation*}
which sends $(M_{j}\subset E_{I_{j}})_{j\in J}$ to the torsion-free coherent $\co_{C}$-module $\cm$ of generic rank $1$ obtained by gluing $\co_{C\backslash \{c_{j}\}_{j\in J}}$ with the $M_{j}$'s. It is clear that we can twist the construction by an invertible sheaf on $C$, and  get a morphism
\begin{equation}\label{uniform cj general}
\big[\xx^{0}(C)\times \jac_{C}\big]/G^{0}(C)\to \overline{P}_{C}^{\natural}. 
\end{equation}
Forgetting local rigidifications, we get
\begin{equation}\label{uniform cj general descent}
\big[(\Lambda^{0}(C)\backslash \xx^{0}(C))\times \jac_{C}\big]/G^{0}(C)\to \overline{P}_{C}.  
\end{equation}

\begin{prop}[Laumon \cite{laumon springer}]\label{cj fibration general}

The morphisms $(\ref{uniform cj general descent})$ is a universal homeomorphisms. 
Consequently, we get a fibration
$\overline{P}_{C}\to \jac_{\widetilde{C}}$
with fiber homeomorphic to 
\begin{equation*}
\Lambda^{0}(C)\backslash \xx^{0}(C)\cong \prod_{j\in J}\Lambda_{I_{j}}^{0}\backslash\xx^{0}_{I_{j}},
\end{equation*} 
where $\Lambda_{I_{j}}^{0}=\ker\big\{\bz^{I_{j}}\xrightarrow{\rm sum} \bz\big\}$.

\end{prop}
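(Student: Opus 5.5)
The plan is to reduce the statement to the single-singularity case treated by Laumon \cite{laumon springer}, Proposition 2.3.1, which was already used for Proposition \ref{cj springer}. We then handle the several singular points by a product argument and the twisting by $\jac_{C}$ by Chevalley's d\'evissage (\ref{chevalley cj curve}).

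First we make the morphism (\ref{uniform cj general}) explicit and show it is bijective on geometric points. A simple torsion-free rank $1$ sheaf $\cm$ of degree $0$ on $C$ is locally free away from the $z_{j}$. Choose an invertible sheaf $\cl$ with $\cl|_{C\backslash\{z_{j}\}}\cong\cm|_{C\backslash\{z_{j}\}}$; this is possible because $\pic$ of the smooth open curve is surjected onto by $\pic(C)$ after modifying at the points. Transport the local rigidifications through $\cl$. Then $\cm$ is recovered by gluing $\cl|_{C\backslash\{z_{j}\}}$ with lattices $M_{j}\subset E_{I_{j}}$. The ambiguity in the choice of $\cl$ together with its trivializations at the $z_{j}$ is exactly an element of $G^{0}(C)$: modifying $\cl$ by a sheaf that is trivial away from the $z_{j}$ and has degree $0$ along each branch amounts to multiplying by $\prod_j\co_{E_{I_j}}^{\times}/A_{I_j}^{\times}$. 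The degree condition on $\cm$ forces the lattices to lie in the central component $\xx^{0}(C)$, after absorbing the branchwise degrees into $\cl$, up to an element of $\Lambda(C)$ acting compatibly. This gives surjectivity and injectivity on $k$-points of $[\xx^{0}(C)\times\jac_{C}]/G^{0}(C)\to\overline{P}{}^{\natural}_{C}$.

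Next we check that the map is proper, or at least satisfies the valuative criterion, and is radicial. Together with bijectivity, this makes it a universal homeomorphism. Here we follow Laumon verbatim: the gluing construction works in families over any base $S$ by Beauville--Laszlo descent, which shows representability and the existence of the morphism. The valuative criterion follows because $\overline{P}_{C}$ is proper, and lattices over a DVR extend after bounding by a fixed lattice, since the sheaves on the generic fibre have bounded conductor. Passing to the quotient by $\Lambda^{0}(C)$, which acts freely and compatibly on both sides, yields (\ref{uniform cj general descent}).

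The fibration statement then follows. Compose with the projection to $\jac_{C}/G^{0}(C)=\jac_{\widetilde{C}}$ given by (\ref{chevalley cj curve}). The fibre over a point is $(\Lambda^{0}(C)\backslash\xx^{0}(C)\times G^{0}(C))/G^{0}(C)\cong\Lambda^{0}(C)\backslash\xx^{0}(C)$. This fibre splits as the product over $j$ because both $\xx(C)$ and $\Lambda(C)$ are products, with the degree-zero constraint imposed branchwise, which gives $\Lambda^{0}_{I_{j}}$.

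The main obstacle is the bookkeeping of the degree and the components, namely making precise which subgroup of $\Lambda(C)$ is quotiented and verifying that the central component matches degree $0$. I would handle this by computing $\chi$ of the glued sheaf as $\chi(\cl)-\sum_j\ind(M_{j})$, in the spirit of equation (\ref{degree index}).
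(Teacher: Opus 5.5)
The step that fails is the descent by $\Lambda^{0}(C)$. You let it act ``freely and compatibly'' through the factor $\xx^{0}(C)$ alone, and then read off the fibre over $\jac_{C}/G^{0}(C)=\jac_{\widetilde{C}}$ as $\Lambda^{0}(C)\backslash\xx^{0}(C)$.

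To make $\lambda\in\Lambda^{0}(C)$ act on $\xx^{0}(C)$ you must lift it to some $h_{\lambda}\in G(C)$. Write $\cm_{0}(M)$ for the sheaf glued from $\co_{C\backslash\{z_{j}\}}$ and the $M_{j}$, and $\cl_{h_{\lambda}}$ for the sheaf glued from $\co_{C\backslash\{z_{j}\}}$ and the $h_{\lambda}A_{I_{j}}$. Then $\cm_{0}(h_{\lambda}M)\cong\cm_{0}(M)\otimes\cl_{h_{\lambda}}$ and $\phi^{*}\cl_{h_{\lambda}}\cong\co_{\widetilde{C}}(-\sum_{i}\lambda_{i}\tilde{z}_{i})$. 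So the gluing map $(M,\cl)\mapsto\cm_{0}(M)\otimes\cl$ is invariant only under the diagonal action $(M,\cl)\mapsto(h_{\lambda}M,\cl_{h_{\lambda}}^{-1}\otimes\cl)$. This action translates the base $\jac_{\widetilde{C}}$ by $\co_{\widetilde{C}}(\sum_{i}\lambda_{i}\tilde{z}_{i})$.

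The lifts can be chosen so that $\Lambda^{0}(C)$ acts on $\xx^{0}(C)$ alone only if all these divisors are principal. As soon as some $|I_{j}|\ge 2$, this forces $\widetilde{C}\cong\bp^{1}$, since a principal divisor $\tilde{z}_{i}-\tilde{z}_{i'}$ gives a degree-one map to $\bp^{1}$.

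The same issue is already in your first paragraph. The ambiguity in the choice of $\cl$ is not $G^{0}(C)$ but the preimage $G'(C)$ of $\Lambda^{0}(C)$ in $G(C)$: sheaves trivial off the $z_{j}$ with branchwise degrees $\lambda_{i}$ satisfying $\sum_{i\in I_{j}}\lambda_{i}=0$. Hence $[\xx^{0}(C)\times\jac_{C}]/G^{0}(C)\to\overline{P}_{C}$ is not injective. Also, points of $\jac_{C}$ carry no local rigidification, so the map to $\overline{P}{}^{\natural}_{C}$ is not defined as you use it. Your proposed $\chi$-computation does not help, because the obstruction is the image in $\jac_{\widetilde{C}}$, not the degree.

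This cannot be fixed by better bookkeeping: in the stated generality the proposition is false as an algebraic statement. Let $C$ be an elliptic curve $E$ with two points $p\neq q$ glued to a node, with $\co_{E}(q-p)$ non-torsion. Then $\Lambda^{0}(C)\cong\bz$, $G^{0}(C)=\bbg_{m}$, and $\Lambda^{0}(C)\backslash\xx^{0}(C)$ is a nodal rational curve. So the left side of (\ref{uniform cj general descent}) is a reduced bundle of nodal rational curves over $\jac_{E}$.

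On the other hand, $\overline{P}_{C}$ is obtained from the $\bp^{1}$-bundle compactifying the $\bbg_{m}$-torsor $\jac_{C}\to\jac_{E}$ by gluing the zero section over $N$ to the infinity section over $N\otimes\co_{E}(q-p)$. Any morphism $\overline{P}_{C}\to\jac_{E}$ is constant on the images of the $\bp^{1}$-fibres. It is therefore induced by a morphism $\alpha$ of $\jac_{E}$ with $\alpha(N)=\alpha(N\otimes\co_{E}(q-p))$ for all $N$, and so it is constant.

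Since $\overline{P}_{C}$ is seminormal (it has normal crossings along the boundary), a universal homeomorphism onto it from a reduced scheme is an isomorphism over $\bc$. That would give a non-constant map $\overline{P}_{C}\to\jac_{E}$, a contradiction.

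What does hold in general is the degree-zero part of the product formula: $\overline{P}_{C}$ is universally homeomorphic to $[\xx^{0}(C)\times\jac_{C}]/G'(C)$, with $G'(C)$ acting diagonally as above. Your descent step becomes legitimate when all the divisors $\sum_{i\in I_{j}}\lambda_{i}\tilde{z}_{i}$ are principal, taking $h_{\lambda}$ to be global units on $C\backslash\{z_{j}\}$. Examples are $\widetilde{C}\cong\bp^{1}$, as for $C_{\gamma}$, or all $z_{j}$ unibranch. Otherwise one can at best hope for a homeomorphism in the classical topology, using that translations of $\jac_{\widetilde{C}}$ are isotopic to the identity. This matters for the paper, which applies the proposition to nodal fibres $\cc_{x}$ whose normalizations have positive genus.
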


By the general theory of finite abelian coverings as explained in theorem 4.2 of \cite{chen decomposition}, a finite abelian covering $\Phi: \overline{P}'\to \overline{P}_{C}$ is  determined by a finite subgroup scheme $\ct\subset \pic_{\overline{P}_{C}}=\pic_{C/k}$. 
For such a finite abelian covering, we restrict $\Phi$ to the inverse image of $\jac_{C}$ and get a finite abelian covering $\Phi':J'\to \jac_{C}$ with the same Galois group. 
As the singularities of $C$ are planar, according to Altman-Iarrobino-Kleiman \cite{aik} and Rego \cite{rego}, $\overline{P}_{C}$ is geometrically integral and $\jac_{C}$ is dense open in $\overline{P}_{C}$. 
Hence $J'$ is dense open in $\overline{P}'$. To get a concrete description of $\overline{P}'$, we need to know what is $J'$ and how it is compactified to get $\overline{P}'$.

The structure of $J'$ has been described in the proof of theorem 4.3 of \cite{chen decomposition}.
Let $\phi^{*}:\pic_{C/k}\to \pic_{\widetilde{C}/k}$ be the pull-back of invertible sheaves, then $\Phi':J'\to \jac_{C}$ is a trivial $\Ker(\phi^{*}|_{\ct})$-covering of a finite abelian covering $J_{C}'$ of $\jac_{C}$.  
Let $J'_{\widetilde{C}}$ be the finite abelian covering of $\jac_{\widetilde{C}}$ associated to the torsion subgroup $\phi^{*}(\ct)^{\red}$, here the superscript $^{\red}$ means the group scheme with reduced structure. 
With the d\'evissage (\ref{chevalley cj curve}), we obtain that $\pic_{\jac_{C}}=\pic_{\jac_{\widetilde{C}}}$, and that $J_{C}'$ admits the d\'evissage
$$
1\to G^{0}(C)\to J'_{C}\to J'_{\widetilde{C}}\to 1.
$$

For the compactification of $J'$ to $\overline{P}'$, since $\Phi$ is an \'etale Galois covering, the compactification is modeled on that of $\jac_{C}$ in $\overline{P}_{C}$.  
We define $\overline{P}''$ by the Cartesian diagram
$$
\begin{tikzcd}
\overline{P}''\arrow[r, "\Phi_{2}"]\arrow[d]& \overline{P}_{C}\arrow[d]\\
J'_{\widetilde{C}}\arrow[r]&\jac_{\widetilde{C}}\end{tikzcd}
$$
Then $\overline{P}''\to \overline{P}_{C}$ is a subcovering of $\Phi$, and we have factorization
$$
\Phi:\overline{P}'\xrightarrow{\Phi_{1}} \overline{P}''\xrightarrow{\Phi_{2}} \overline{P}_{C}. 
$$
The fibration $\overline{P}''\to J'_{\widetilde{C}}$ has fibre homeomorphic to $\Lambda^{0}(C)\backslash \xx^{0}(C)$. By abuse of notation, we identify this quotient as the fiber over the identity element $e'\in J'_{\widetilde{C}}$. 
We define $X'$ by the cartesian diagram
$$
\begin{tikzcd}
X'\arrow[r]\arrow[d]& \overline{P}'\arrow[d, "\Phi_{1}"]\\
\Lambda^{0}(C)\backslash \xx^{0}(C)\arrow[r]&\overline{P}''\end{tikzcd}
$$
By construction, ${J}_{C}'$ is dense open in $\overline{P}''$, and its subgroup $G^{0}(C)$ acts on $\Lambda^{0}(C)\backslash \xx^{0}(C)$ with dense open orbit. The covering $\Phi_{1}:  \overline{P}'\to \overline{P}''$ restricts to the trivial $\ker(\phi^{*}|_{\ct})$-covering ${J}'\to {J}'_{C}$. Hence ${J}'$ is isomorphic to $\ker(\phi^{*}|_{\ct})\times {J}'_{C}$, and its subgroup $\ker(\phi^{*}|_{\ct})\times G^{0}(C)$ acts on $X'$ with dense open orbit. 
As a consequence of proposition \ref{cj fibration general}, we obtain:

\begin{prop}\label{describe finite cj}
With the above notations, we have a universal homeomorphism
\begin{equation*}
\big[X'\times J'\big]\Big/\big(\ker(\phi^{*}|_{\ct})\times G^{0}(C)\big)\to \overline{P}'.
\end{equation*}
In particular, we have a fibration
$\overline{P}'\to J'_{\widetilde{C}}$
with fiber $X'$. 
\end{prop}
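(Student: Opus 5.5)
The proof will be a base change of Laumon's uniformization in Proposition~\ref{cj fibration general} along the finite covering $\Phi$, followed by a check that the resulting map is a universal homeomorphism.

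\textbf{Step 1: the map.} By Proposition~\ref{cj fibration general}, the morphism (\ref{uniform cj general descent}),
\[
\big[(\Lambda^{0}(C)\backslash \xx^{0}(C))\times \jac_{C}\big]/G^{0}(C)\to \overline{P}_{C},
\]
is a universal homeomorphism. I first pull it back along $J'_{\widetilde{C}}\to \jac_{\widetilde{C}}$. This replaces $\jac_{C}$ by $J'_{C}$, using the d\'evissage $1\to G^{0}(C)\to J'_{C}\to J'_{\widetilde{C}}\to 1$, and yields a universal homeomorphism
\[
\big[(\Lambda^{0}(C)\backslash \xx^{0}(C))\times J'_{C}\big]/G^{0}(C)\to \overline{P}''.
\]
Next I pull back along the \'etale covering $\Phi_{1}:\overline{P}'\to\overline{P}''$. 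Over the fiber $\Lambda^{0}(C)\backslash\xx^{0}(C)$ the pullback is $X'$ by definition, and over the dense open $J'_{C}$ it is $J'\cong\ker(\phi^{*}|_{\ct})\times J'_{C}$.

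The map $X'\times J'\to\overline{P}'$ is the action map. Both $X'$ and $J'$ lie over $\overline{P}''$ and are torsors under the Galois group of $\Phi_{1}$, so the action of $J'$ on $\overline{P}'$ should lift the action of $J'_{C}$ on $\overline{P}''$. This lifting comes from the fact that $\Phi$ is a Galois covering attached to $\ct\subset\pic_{C/k}$, so $J'$ is an extension of $J_C$ by the Galois group acting compatibly. The action map factors through the quotient by $\ker(\phi^{*}|_{\ct})\times G^{0}(C)$, where this group acts anti-diagonally: on $X'$ through its dense open orbit, and on $J'$ by translation.

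\textbf{Step 2: universal homeomorphism.} Universal homeomorphisms are stable under base change, and the \'etale base change of an equivariant quotient map stays an equivariant quotient map. Hence the source is identified with
\[
\Big(\big[(\Lambda^{0}(C)\backslash \xx^{0}(C))\times J'_{C}\big]/G^{0}(C)\Big)\times_{\overline{P}''}\overline{P}',
\]
and the comparison morphism is the base change of the universal homeomorphism above. It remains to check that the base-changed quotient coincides with $[X'\times J']/(\ker(\phi^{*}|_{\ct})\times G^{0}(C))$. I will do this by comparing both sides over $\overline{P}''$: they are finite \'etale coverings with the same group and the same restriction to $J'_{C}$.

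\textbf{Step 3: the fibration.} Composing with the projection $J'\to J'_{C}\to J'_{\widetilde{C}}$, which is invariant under $\ker(\phi^{*}|_{\ct})\times G^{0}(C)$ because it kills both factors, gives $\overline{P}'\to J'_{\widetilde{C}}$. Its fibers are $[X'\times (\ker(\phi^{*}|_{\ct})\times G^{0}(C))]/(\ker(\phi^{*}|_{\ct})\times G^{0}(C))\cong X'$.

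\textbf{Main obstacle.} I expect the hardest step to be the lifting in Step 1, namely that the $J'_{C}$-action lifts to a $J'$-action on $\overline{P}'$ compatible with the trivialization $J'\cong\ker\times J'_{C}$. I would first try to verify it on the dense open $J'$, where it is just group multiplication, and then extend it by normality of $\overline{P}'$ and \'etaleness of $\Phi_1$.
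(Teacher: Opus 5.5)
Your outline is the same as the paper's. The paper gets the proposition directly from Laumon's uniformization (Proposition~\ref{cj fibration general}), pulled back along $\Phi_{2}$ and then along $\Phi_{1}$. It uses $J'\cong\ker(\phi^{*}|_{\ct})\times J'_{C}$ and the dense-orbit action of $\ker(\phi^{*}|_{\ct})\times G^{0}(C)$ on $X'$. Your Steps 1 and 3 are fine.

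The gap is in the two places where you supply an actual argument: the comparison in Step 2 (``same group and same restriction to $J'_{C}$''), and the extension of the action ``by normality of $\overline{P}'$ and \'etaleness of $\Phi_{1}$''. Both assume that an \'etale cover of $\overline{P}''$, or a lift into it, is determined by its restriction to the dense open $J'_{C}$. That assumption fails here, and has to fail. By construction $\Phi_{1}$ is trivial over $J'_{C}$, so if it held, $\Phi_{1}$ would be trivial, contradicting Proposition~\ref{multiple locus}.

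Concretely, $\overline{P}_{C}$ is not normal: for instance $\overline{P}_{C_{xy}}\cong C_{xy}$. For $C=C_{xy}$ one has $\jac_{C}=\bbg_{m}$, $\ker(\phi^{*}|_{\ct})=\ct$ and $J'=\bz/n\times\bbg_{m}$, while $\overline{P}'$ is a cycle of $n$ projective lines.
\begin{itemize}
\item The trivial cover (a disjoint union of $n$ copies of $C_{xy}$) has the same group and the same restriction to $J'_{C}$, so your Step 2 criterion cannot distinguish it from $\overline{P}'$.
\item The closure of a sheet $\{k\}\times\bbg_{m}$ is a $\bp^{1}$ mapping to $C_{xy}$ by normalization. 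It has two points over the node, so it is not a section there, and the group law on $J'\times J'$ cannot be extended by taking closures.
\end{itemize}

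The missing idea is to build the lift from monodromy rather than from the open orbit. Put $K=\ker(\phi^{*}|_{\ct})$, so that $\Phi_{1}$ is a $K$-torsor classified by $\rho:\pi_{1}(\overline{P}'',e)\to K$.
\begin{enumerate}
\item Since $\overline{P}''$ is proper and connected and $J'_{C}$ is connected, $\pi_{1}(J'_{C}\times\overline{P}'')=\pi_{1}(J'_{C})\times\pi_{1}(\overline{P}'')$.
\item Let $a:J'_{C}\times\overline{P}''\to\overline{P}''$ be the action. Then $\rho\circ a_{*}(u,v)=\rho(\iota_{*}u)+\rho(v)$, where $\iota:J'_{C}\hookrightarrow\overline{P}''$ is the orbit map at $e$.
\item Triviality of $\Phi_{1}$ over $J'_{C}$ says exactly that $\rho\circ\iota_{*}=0$. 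Hence $a^{*}\overline{P}'\cong J'_{C}\times\overline{P}'$ as $K$-torsors.
\item Normalize this isomorphism to be the identity along $\{e\}\times\overline{P}''$. This gives a unique $K$-equivariant lift $J'_{C}\times\overline{P}'\to\overline{P}'$ of $a$.
\item Together with the deck action of $K$, this is an action of $J'=K\times J'_{C}$. It satisfies the action axioms and restricts to the group law on $J'$, both by uniqueness of lifts agreeing at one point.
\end{enumerate}

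Once this action exists, Step 2 needs no comparison. With $X=\Lambda^{0}(C)\backslash\xx^{0}(C)$, the induced map
\[
[X'\times J']/(K\times G^{0}(C))\to\big([X\times J'_{C}]/G^{0}(C)\big)\times_{\overline{P}''}\overline{P}'
\]
is a $K$-equivariant morphism of $K$-torsors over the same base, hence an isomorphism. So the map in the statement is the base change of a universal homeomorphism.
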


\subsubsection{Applications to the miniversal equivariant deformation}

We apply results from \S\ref{recall finite covering} to the family $f_{n}:\overline{\cp}_{n}\to \cb_{n}, (n,p)=1$.
We first briefly recall their construction from \cite{chen decomposition}. 
Let $A=\co[\gamma]$ and $\widetilde{A}$ the normalization of $A$ in $E=\mathrm{Frac}(A)$.
By the general theory of finite abelian coverings as explained in theorem 4.2 of \cite{chen decomposition}, a finite abelian covering $C'\to C_{\gamma}$ is  uniquely determined by a finite formal sum $\ct_{C'/C_{\gamma}}$ of torsion subgroups of $\jac_{C_{\gamma}}$. We calculate that $\pic^{0}_{C_\gamma}\cong \widetilde{A}^{\times}/A^{\times}$. It is product of a unipotent subgroup and a split maximal torus $\rS_{\gamma}=\bbg_{m}^{r}/\bbg_{m}$. 
Hence the torsion subgroup of $\pic_{C_\gamma}$ coincides with that of $\rS_{\gamma}$. 
For $n\in \bn, (n,p)=1$, we denote by $\psi_{n}: C_{n}\to C_{\gamma}$ the finite abelian covering corresponding to the torsion subgroup $\pic^{0}_{C_\gamma}[n]=\rS_{\gamma}[n]=\mu_{n}^{r}/\mu_{n}$.
The Galois group of the covering is naturally identified with the dual 
$$
\Hom\big(\rS_{\gamma}[n], {\overline{\mathbf{Q}}}_{\ell}^{\times}\big)=
\Hom\big(\,{\textstyle\frac{1}{n}}X_{*}(\rS_{\gamma})/X_{*}(\rS_{\gamma}), \bq/\bz\big)
\cong \Lambda^{0}/n.
$$
Similar construction works for the compactified Jacobian $\overline{P}_{C_{\gamma}}$.
By the autoduality of the compactified Jacobian \cite{arinkin 1}, \cite{arinkin 2}, we have
$$
\pic_{\overline{P}_{C_{\gamma}}/k}=\pic_{C_{\gamma}/k}. 
$$
The same torsion subgroup $\rS_{\gamma}[n]$ determines a $\Lambda_{0}/n$-covering $\overline{P}_{n}\to \overline{P}_{C_{\gamma}}$.
Moreover, we can find an open subscheme $\cb^{\circ}\subset \cb$, and quasi-finite \'etale coverings $\varpi_{n}:\cb_{n}\to \cb^{\circ}$ such that $\pic^{0}_{C_\gamma}[n]$ extends to a constant finite sub group scheme $\ct_{n}$ of $\pic_{\cc/\cb}\times_{\cb}\cb_{n}$, which then determines a $\Lambda^{0}/n$-covering 
$$
\Phi_{n}: \overline{\cp}_{n}\to \overline{\cp}\times_{\cb}\cb_{n}.
$$ 
Its composite with the structure morphism to $\cb_{n}$ gives the family $f_{n}:\overline{\cp}_{n}\to \cb_{n}$.

The finite sub group scheme $\ct_{n}$ of $\pic_{\cc/\cb}\times_{\cb}\cb_{n}$ can be described explicitly, this has been done in \cite{chen decomposition}, \S4.3.   
The fiber of $\ct_{n}$ at $0_{n}$ is isomorphic to
\begin{equation}\label{h1 curve copy}
H^{1}(C_\gamma,\bz/n)=J_{C_\gamma}[n]\otimes \mu_{n}^{-1}=\rS_{\gamma}[n]\otimes \mu_{n}^{-1}={\textstyle\frac{1}{n}}X_{*}(\rS_{\gamma})/X_{*}(\rS_{\gamma}),
\end{equation} 
For $i=1,\cdots, r$, let $\ep_{i}\in X_{*}(\rS_{\gamma})=X_{*}(\bbg_{m}^{r}/\bbg_{m})$ be the cocharacter sending $\bbg_{m}$ identically to the $i$-th factor of $\rS_{\gamma}$. 
Let $c_{i}\in H^{1}(C_\gamma,\bz)$ be the element such that $c_{i}\mod n$ corresponds to $\frac{1}{n}\ep_{i}$ under the isomorphism (\ref{h1 curve copy}) for all $n\in \bn, (n,p)=1$.
They extend to global sections of $\varprojlim\ct_{n}$ and form a set of generators; we denote the extension by the same notation $c_{i}$.   
In \cite{chen decomposition}, we get an expression for $c_{i}$ in terms of a set of distinguished basis of the Milnor fiber of $C_{\gamma}$: 
Take a generic point $s\in B_{\ep}(0)\cap \cb_{\delta_{\gamma}}$, the fiber $\cc_{s}$ is rational projective with $\delta_{\gamma}$ nodes.  Let $U_{c}\subset \cc$ be a sufficiently small open ball around the unique singularity $c$ of $C_{\gamma}$. 
Each of the irreducible component $\spf(\co[\gamma_{i}])$, $i=1, \cdots,r$, of $\spf(\co[\gamma])$ deforms along $\cb_{\delta_{\gamma}}$ into an irreducible component $\cc_{s,i}^{\circ}$ of $U_{c}\cap \cc_{s}$.
Let $z_{1}, \cdots, z_{\delta_{\gamma}}$ be the singularities of $\cc_{s}$, let $\alpha_{1}, \cdots, \alpha_{\delta_{\gamma}}\in H^{1}(\cc_{\bar{\xi}}, \bz)$ be the corresponding vanishing cycle. 
They form a set of distinguished basis for $H^{1}(\cc_{\bar{\xi}}, \bz)$.
For $i=1, \cdots, r$, let $J_{i}\subset \{1,\cdots, \delta_{\gamma}\}$ be the index set of the singularities lying on $\cc_{s, i}^{\circ}$, let $J_{i}^{\circ}\subset J_{i}$ be the subset for the singularities which lie also on another branch. 
By proposition 4.16 of \cite{chen decomposition}, we have
\begin{equation}\label{c express}
c_{i}=\sum_{j\in J_{i}^{\circ}}(\pm) \alpha_{j}, \quad i=1,\cdots, r.
\end{equation}
Moreover, for any $i\neq i'$ and $j\in J_{i}^{\circ}\cap J_{i'}^{\circ}$, the sign of $\alpha_{j}$ in the expression of $c_{i}$ and $c_{i'}$ are opposite. 
In particular, we have $\sum_{i=1}^{r}c_{i}=0$.

Let $x_{n}\in \cb_{n}$ be a geometric point lying over $x\in B_{\ep}(0)\cap \Delta$, let $\{z_{j}\}_{j\in J}$ be the set of singularities of $\cc_{x}$. 
We can use results from \S\ref{recall finite covering} to describe the finite abelian covering $\Phi_{n, x_{n}}:\overline{\cp}_{n,x_{n}}\to \overline{P}_{\cc_{x}}$ explicitly. 
We have a d\'evissage
$$
1\to G^{0}(\cc_{x})\to \jac_{\cc_{x}}\to \jac_{\widetilde{\cc}_{x}}\to 1,
$$
and a fibration
$$
\overline{P}_{\cc_{x}}\to \jac_{\widetilde{\cc}_{x}}
$$
with fibre homeomorphic to $\Lambda(\cc_{x})^{0}\backslash \xx(\cc_{x})^{0}$.
Let $\phi_{x}:\widetilde{\cc}_{x}\to \cc_{x}$ be the normalization, let
$\phi_{x}^{*}:\pic_{\cc_{x}}\to \pic_{\widetilde{\cc}_{x}}$ be the pull-back of invertible sheaves. 
Let $J'_{\widetilde{\cc}_{x}}$ be the finite abelian covering of $\jac_{\widetilde{\cc}_{x}}$ associated to the finite group scheme $\phi^{*}_{x}(\ct_{n, x})^{\red}$. 
Let $J'$ be the inverse image of $\jac_{\cc_{x}}$ for the finite abelian covering $\Phi_{n, x_{n}}:\overline{\cp}_{n,x_{n}}\to \overline{P}_{\cc_{x}}$, it is a trivial $\ker(\phi_{x}^{*}|_{\ct_{n,x_{n}}})$-covering of a finite abelian covering $J_{\cc_{x}}'$ of $\jac_{\cc_{x}}$, and $J_{\cc_{x}}'$ admits a d\'evissage
$$
1\to G^{0}(\cc_{x})\to J_{\cc_{x}}'\to J_{\widetilde{\cc}_{x}}'\to 1. 
$$
We define $\overline{P}''$ by the Cartesian diagram
$$
\begin{tikzcd}
\overline{P}''\arrow[r, "\Phi_{2}"]\arrow[d]& \overline{P}_{\cc_{x}}\arrow[d]\\
J'_{\widetilde{\cc}_{x}}\arrow[r]&\jac_{\widetilde{\cc}_{x}}\end{tikzcd}
$$
Then $\overline{P}''\to \overline{P}_{\cc_{x}}$ is a subcovering of $\Phi_{n, x_{n}}:\overline{\cp}_{n,x_{n}}\to \overline{P}_{\cc_{x}}$, and we have factorization
$$
\Phi_{n,x_{n}}:\overline{\cp}_{n,x_{n}}\xrightarrow{\Phi_{1}} \overline{P}''\xrightarrow{\Phi_{2}} \overline{P}_{\cc_{x}}. 
$$
The fibration $\overline{P}''\to J'_{\widetilde{\cc}_{x}}$ has fibre homeomorphic to $\Lambda^{0}(\cc_{x})\backslash \xx^{0}(\cc_{x})$. By abuse of notation, we identify this quotient as the fiber over the identity element $e'\in J'_{\widetilde{\cc}_{x}}$. 
We define $X'$ by the cartesian diagram
\begin{equation}\label{define x'}
\begin{tikzcd}
X'\arrow[r]\arrow[d]& \overline{\cp}_{n,x_{n}}\arrow[d, "\Phi_{1}"]\\
\Lambda^{0}(\cc_{x})\backslash \xx^{0}(\cc_{x})\arrow[r]&\overline{P}''\end{tikzcd}
\end{equation}
By construction, ${J}_{\cc_{x}}'$ is dense open in $\overline{P}''$, and its subgroup $G^{0}(\cc_{x})$ acts on $\Lambda^{0}(\cc_{x})\backslash \xx^{0}(\cc_{x})$ with dense open orbit. The covering $\Phi_{1}:  \overline{\cp}_{n,x_{n}}\to \overline{P}''$ restricts to the trivial $\ker(\phi_{x}^{*}|_{\ct_{n,x_{n}}})$-covering ${J}'\to {J}'_{\cc_{x}}$. Hence ${J}'$ is isomorphic to $\ker(\phi_{x}^{*}|_{\ct_{n,x_{n}}})\times {J}'_{\cc_{x}}$, and its subgroup $\ker(\phi_{x}^{*}|_{\ct_{n,x_{n}}})\times G^{0}(\cc_{x})$ acts on $X'$ with dense open orbit. 
By proposition \ref{describe finite cj}, we get

\begin{prop}\label{describe finite cj alt}
With the above notations, we have a universal homeomorphism
\begin{equation*}
\big[X'\times J'\big]\Big/\big(\ker(\phi_{x}^{*}|_{\ct_{n,x_{n}}})\times G^{0}(\cc_{x})\big)\to \overline{\cp}_{n, x_{n}}.
\end{equation*}
In particular, we have a fibration
$\overline{\cp}_{n, x_{n}}\to J'_{\widetilde{\cc}_{x}}$
with fiber homeomorphic to $X'$. 
\end{prop}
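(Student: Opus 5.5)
The plan is to deduce Proposition \ref{describe finite cj alt} directly from Proposition \ref{describe finite cj}, applied to the curve $C=\cc_{x}$ and the finite abelian covering $\Phi_{n,x_{n}}:\overline{\cp}_{n,x_{n}}\to \overline{P}_{\cc_{x}}$. The main task is to check that the hypotheses of \S\ref{recall finite covering} are met in this setting; the conclusion is then a transcription. Concretely, I would check two things. First, $\cc_{x}$ is an irreducible projective curve with planar singularities $\{z_{j}\}_{j\in J}$. Irreducibility holds because the miniversal deformation of the irreducible curve $C_{\gamma}$ has irreducible fibers near $0$, since $x\in B_{\ep}(0)$. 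Planarity holds because $\cc_{x}$ is a deformation of a planar singularity. Second, $\Phi_{n,x_{n}}$ is the finite abelian covering attached, in the sense of theorem 4.2 of \cite{chen decomposition}, to the finite subgroup scheme $\ct_{n,x_{n}}\subset \pic_{\cc_{x}/k}$. This follows by restricting the global construction $\Phi_{n}:\overline{\cp}_{n}\to\overline{\cp}\times_{\cb}\cb_{n}$, defined by the constant group scheme $\ct_{n}$, to the fiber over $x_{n}$. For this I would use the compatibility of the autoduality $\pic_{\overline{\cp}/\cb}=\pic_{\cc/\cb}$ of \cite{arinkin 1}, \cite{arinkin 2} with base change, together with the fact that $\ct_{n}$ is étale, since $(n,p)=1$.

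Granting these two points, every object in Proposition \ref{describe finite cj} specialises to the object of the same name in the present setting. The group $G^{0}(C)$ becomes $G^{0}(\cc_{x})$. The subgroup $\ker(\phi^{*}|_{\ct})$ becomes $\ker(\phi_{x}^{*}|_{\ct_{n,x_{n}}})$. The covering $J'_{\widetilde{C}}$ becomes $J'_{\widetilde{\cc}_{x}}$, attached to $\phi_{x}^{*}(\ct_{n,x_{n}})^{\red}$. The intermediate cover $\overline{P}''$ and the fiber $X'$ are defined by the same Cartesian diagrams, the latter by (\ref{define x'}). The universal homeomorphism $[X'\times J']/(\ker(\phi_{x}^{*}|_{\ct_{n,x_{n}}})\times G^{0}(\cc_{x}))\to\overline{\cp}_{n,x_{n}}$ then follows. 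Projecting $J'\to J'_{\cc_{x}}\to J'_{\widetilde{\cc}_{x}}$ and using that $\ker(\phi_{x}^{*}|_{\ct_{n,x_{n}}})\times G^{0}(\cc_{x})$ lies in the kernel of this projection, we obtain a map $\overline{\cp}_{n,x_{n}}\to J'_{\widetilde{\cc}_{x}}$. Its fiber over $e'$ is $[X'\times(\ker\times G^{0}(\cc_{x}))]/(\ker\times G^{0}(\cc_{x}))\cong X'$, and translation by $J'$ identifies all the fibers, which gives the fibration.

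I expect the main obstacle to be the second verification, namely the fiberwise identification of $\Phi_{n,x_{n}}$ with the covering attached to $\ct_{n,x_{n}}$. There are two worries: that the restriction of an étale Galois covering of the total space to a singular fiber remains the covering determined by the restricted torsion subgroup, and that $\overline{\cp}_{n,x_{n}}$ is connected in the right way. My first attempt would be to argue on the dense open $\jac_{\cc_{x}}$, where the covering is a torsor under the Cartier dual of $\ct_{n,x_{n}}$ by construction. I would then extend over $\overline{P}_{\cc_{x}}$ by normality: $\overline{P}_{\cc_{x}}$ is integral by \cite{aik}, \cite{rego}, and étale coverings extend uniquely from dense opens of normal schemes. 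Failing normality, I would instead invoke the uniqueness part of theorem 4.2 of \cite{chen decomposition} in families.
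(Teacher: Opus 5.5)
Your proposal is correct and follows the paper's own argument: the paper deduces the statement directly by applying Proposition~\ref{describe finite cj} to $C=\cc_{x}$ and $\ct=\ct_{n,x_{n}}$, taking for granted that $\Phi_{n,x_{n}}$ is the covering attached to $\ct_{n,x_{n}}$, which you correctly justify by base change of the relative covering $\Phi_{n}$ defined by $\ct_{n}$. One caution: your fallback ``first attempt'' via normality would fail, because $\overline{P}_{\cc_{x}}$ is singular in codimension one as soon as $\cc_{x}$ has a node, so it is not normal; the step is not needed anyway, since the covering is already defined on all of $\overline{P}_{\cc_{x}}$ as the fiber of $\Phi_{n}$.
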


Recall that in \cite{chen decomposition} we have defined for each non-trivial partition $\{1, \cdots,r\}=\bigsqcup_{j=1}^{l}I_{j}$ a strata $\cs_{I_{\bullet}}\subset \cb$. 
For such a partition, we set $c_{I_{j}}=\sum_{i\in I_{j}}c_{i}$, and we define $\cs_{I_{\bullet}}$ by the condition that $x\in \cs_{I_{\bullet}}$ if and only if all the classes $c_{I_{1}}, \cdots, c_{I_{l}}$ vanish on the fiber $\cc_{x}$.
Let $\cs_{I_{\bullet}}^{\circ}$ be the open subscheme of $\cs_{I_{\bullet}}$ over which the fibers of $\pi:\cc\to \cb$ have only ordinary double points, it is one of the strict $\delta$-strata. 
Let $\cs_{I_{\bullet}, n}=\cs_{I_{\bullet}}\times_{\cb}\cb_{n}$ and $\cs_{I_{\bullet},n}^{\circ}=\cs_{I_{\bullet},n}\times_{\cs_{I_{\bullet}}}\cs_{I_{\bullet}}^{\circ}$.

\begin{prop}\label{multiple locus}

The finite abelian covering $X'\to \Lambda^{0}(\cc_{x})\backslash \xx^{0}(\cc)$ is non-trivial if and only if $x\in \cs_{I_{\bullet}}$ for some non-trivial partition $I_{\bullet}\vdash \{1,\cdots, r\}$. Moreover, for $x\in \cs_{I_{\bullet}}^{\circ}$, the Galois group is isomorphic to $(\bz/n)^{\oplus(l-1)}$, where $l$ is the length of the partition $I_{\bullet}$. 

\end{prop}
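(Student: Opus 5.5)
The plan is to read off the covering $X'\to \Lambda^{0}(\cc_{x})\backslash \xx^{0}(\cc_{x})$ from the cartesian diagram (\ref{define x'}). Its Galois group is the kernel of the map $\ct_{n,x_{n}}\to \phi_{x}^{*}(\ct_{n,x_{n}})$, namely $\ker(\phi_{x}^{*}|_{\ct_{n,x_{n}}})$. The reason is that $\Phi_{1}$ is exactly the part of $\Phi_{n,x_{n}}$ not seen by $\jac_{\widetilde{\cc}_{x}}$. So the statement reduces to computing which elements of $\ct_{n,x_{n}}$ die under pull-back to the normalization. Equivalently, by the duality (\ref{h1 curve copy}) and its specialization, we must determine the image of $\ct_{n,x_{n}}$ in the part of $H^{1}(\cc_{x},\bz/n)$ coming from the toric part $G^{0}(\cc_{x})$. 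For a nodal fiber, this part is spanned by vanishing cycles that do not survive on $\widetilde{\cc}_{x}$.

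\textbf{Step 1.} $\ct_{n}$ is generated by the reductions mod $n$ of the global sections $c_{1},\dots,c_{r}$, subject to the single relation $\sum_{i}c_{i}=0$. Specializing to $x$, the kernel of $\phi_{x}^{*}$ on $\ct_{n,x_{n}}$ corresponds to those $\bz/n$-combinations $\sum a_{i}c_{i}$ whose class in $H^{1}(\cc_{x},\bz/n)$ is nonzero but lies in the toric part.

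\textbf{Step 2.} I would describe the specialization using the expression (\ref{c express}), degenerating from the generic nodal point $s\in \cb_{\delta_{\gamma}}$ to $x$. A combination $\sum a_{i}c_{i}$ vanishes on $\cc_{x}$ precisely when it is killed by the cospecialization map. For the part to be toric yet nonzero, the combination must vanish in $H^{1}(\widetilde{\cc}_{x})$ while some of its node-cycles survive as nodes of $\cc_{x}$. By the definition of $\cs_{I_{\bullet}}$, the classes $c_{I_{j}}$ vanish on $\cc_{x}$ exactly when the branches grouped by $I_{\bullet}$ separate, so that the sum of cycles over a block becomes trivial. From this:

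\emph{If $x\notin\bigcup_{I_{\bullet}}\cs_{I_{\bullet}}$,} no nontrivial combination vanishes on $\cc_{x}$. Moreover, every such combination pulls back nontrivially to $\widetilde{\cc}_{x}$, because the nodes joining distinct branches persist. Hence the kernel is trivial and the covering is trivial.

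\emph{If $x\in \cs_{I_{\bullet}}$,} the combinations $\sum_{j}b_{j}c_{I_{j}}$ give nontrivial elements of the kernel. The conditions $c_{I_{j}}|_{\cc_{x}}$ "vanishing" should be read as vanishing in $H^{1}(\widetilde{\cc}_{x})$ while remaining nonzero on $\cc_{x}$, and these combinations exhibit that.

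\textbf{Step 3.} For $x\in \cs_{I_{\bullet}}^{\circ}$, $\cc_{x}$ has only ordinary double points. I would compute the toric part explicitly as $H^{1}$ of the dual graph of the nodes, with $\bz/n$ coefficients. The $c_{I_{j}}$, for $j=1,\dots,l$, span a free $\bz/n$-module of rank $l-1$ because of the single relation $\sum_{j}c_{I_{j}}=0$. The remaining combinations of the $c_{i}$ map isomorphically into $\jac_{\widetilde{\cc}_{x}}[n]$. This gives a Galois group $(\bz/n)^{\oplus(l-1)}$.

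The main obstacle is Step 2: making precise how the classes $c_{i}$ specialize to $\cc_{x}$ and split into a toric part and an abelian part. I would attack it with a local computation near the nodes, using (\ref{c express}) together with the fact that the signs of $\alpha_{j}$ in $c_{i}$ and $c_{i'}$ are opposite. This lets us track which $\alpha_{j}$ become vanishing cycles at the nodes of $\cc_{x}$.
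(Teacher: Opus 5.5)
Your opening reduction is the same as the paper's: by the cartesian square (\ref{define x'}) the Galois group is $\ker(\phi_{x}^{*}|_{\ct_{n,x_{n}}})$. Once ``$c_{I_{j}}$ vanishes on $\cc_{x}$'' is read as $\phi_{x}^{*}(c_{I_{j}})=0$, the ``if'' direction is formal, since $c_{I_{1}}\bmod n$ is a nonzero element of $\ct_{n,x_{n}}\cong(\bz/n)^{r}/(\bz/n)(1,\dots,1)$.

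The substance of the proposition is the converse, together with the upper bound in your Step 3, and there the proposal has a genuine gap. You must show two things. First, whenever some nonzero $\sum a_{i}c_{i}\bmod n$ dies on $\widetilde{\cc}_{x}$, the kernel already contains every $c_{I_{j}}$ for some nontrivial partition $I_{\bullet}$. Second, on $\cs^{\circ}_{I_{\bullet}}$ the kernel contains nothing beyond the span of the $c_{I_{j}}$. Step 2 only asserts this, and three things are missing:
\begin{itemize}
\item The reason you give, that ``the nodes joining distinct branches persist'', points the wrong way. Off $\bigcup\cs_{I_{\bullet}}$ the local curve $\cc_{x}\cap U_{c}$ is irreducible, so the intersections between the original branches have been smoothed enough to connect all of them. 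It is this connectedness of the local normalization, not the persistence of any node, that keeps the $c_{i}$ alive on $\widetilde{\cc}_{x}$.
\item The mod $n$ issue is not addressed. Without a saturation statement, a combination could die in $H^{1}(\widetilde{\cc}_{x},\bz/n)$ without dying integrally.
\item Your planned computation via (\ref{c express}) does not reach arbitrary $x$. The $\alpha_{j}$ are vanishing cycles of the generic $\delta_{\gamma}$-nodal fiber $\cc_{s}$, not of $\cc_{x}$. They describe the toric part of $H^{1}(\cc_{x})$ only if $\cc_{x}$ is a partial smoothing of such a $\cc_{s}$, which you would have to justify for every $x$ near $0$.
\end{itemize}

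The paper does not compute this kernel by hand. It invokes two results of the author's earlier decomposition paper: $\ker(\phi_{x}^{*}|_{\ct_{n,x_{n}}})\neq 0$ if and only if $\overline{\cp}_{n,x_{n}}$ has several irreducible components (Theorem 4.3 there), and this happens if and only if $x_{n}\in\cs_{I_{\bullet},n}$ for a nontrivial $I_{\bullet}$ (Theorem 4.27 there). The count $(\bz/n)^{\oplus(l-1)}$ then comes from the generators $c_{I_{j}}\bmod n$ and the single relation $\sum_{j}c_{I_{j}}=0$.

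If you want a self-contained replacement for Step 2, argue topologically. Up to sign, $c_{i}$ is dual to the boundary circle $\partial_{i}$ of the Milnor ball, which lies in the smooth locus. So $\phi_{x}^{*}(c_{i})$ is dual to $[\partial_{i}]$ in $\widetilde{\cc}_{x}=N\cup(S^{2}\text{ minus }r\text{ disks})$, where $N$ is the normalization of $\cc_{x}\cap U_{c}$. Let $N_{1},\dots,N_{l}$ be the components of $N$, and let $I_{j}$ index the circles bounding $N_{j}$. The only relations among the $[\partial_{i}]$ are $\sum_{i\in I_{j}}[\partial_{i}]=0$. Loops crossing the $\partial_{i}$ with any prescribed intersection vector whose block sums vanish show that the span of the $[\partial_{i}]$ is saturated. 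Hence mod $n$ the kernel is generated exactly by the $c_{I_{j}}$ and is $(\bz/n)^{\oplus(l-1)}$, where $I_{\bullet}$ is the partition of the branches by the local components of $\cc_{x}$.
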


\begin{proof}

With the Cartesian diagram (\ref{define x'}), the covering $X'\to \Lambda^{0}(\cc_{x})\backslash \xx^{0}(\cc)$ is of the same Galois group as that of $\overline{\cp}_{n,x_{n}}\to \overline{P}''$, which is isomorphic to $\ker(\phi_{x}^{*}|_{\ct_{n,x_{n}}})$.
By theorem 4.3 of \cite{chen decomposition}, $\ker(\phi_{x}^{*}|_{\ct_{n,x_{n}}})$ is non-trivial if and only if the fiber $\overline{\cp}_{n,x_{n}}$ has multiple irreducible components.
By theorem 4.27 of \cite{chen decomposition}, this happens if and only if $x_{n}\in \cs_{I_{\bullet}, n}$ for some non-trivial partition $I_{\bullet}\vdash \{1,\cdots, r\}$, i.e. $x\in \cs_{I_{\bullet}}$. 
Moreover, for $x\in \cs_{I_{\bullet}}^{\circ}$, $\ker(\phi_{x}^{*}|_{\ct_{n,x_{n}}})$ is generated by $c_{I_{1}} \mod n,\cdots, c_{I_{l}} \mod n$, and they satisfy a unique non-trivial linear equation $c_{I_{1}}+\cdots+c_{I_{l}}=0$ by the explicite expression (\ref{c express}), so $\ker(\phi_{x}^{*}|_{\ct_{n,x_{n}}})\cong (\bz/n)^{\oplus (l-1)}$, whence the second assertion.

\end{proof}

\subsubsection{The Morse groups for the primitive part}\label{micro primitive}

As explained in the beginning of the section, the primitive part is actually a limit (\ref{primitive as limit}).
By construction, $\varpi_{n}:\cb_{n}\to \cb^{\circ}$ is \'etale and $0_{n}\in \cb_{n}$ is the point lying over $0\in \cb_{n}$. 
Hence the iterated projections for the base $\cb$ can be carried over to $\cb_{n}$ and we can make microlocal analysis of $j_{n, !*}R^{i}f^{\sm}_{n, *}\bz/\ell^{m}$ with the same process as explained in \S\ref{cj microlocal}. 
We will then examine how the Morse groups for $(j_{n})_{!*}Rf_{n, *}^{\sm}\bz/\ell^{m}$ changes as we go up in the tower $f_{n}$. Take limit, we get the Morse groups for the primitive part. 
This process can be simplified: Choose compatible open neighborhoods $U_{n}$ of $0_{n}$ in $\cb_{n}$ such that $\varpi_{n}$ restricts to an analytic isomorphism from $U_{n}$ to $U_{0}$,
we can then carry over the sheaves $j_{n, !*}R^{i}f^{\sm}_{n, *}\bz/\ell^{m}$ from $U_{n}$ to $U_{0}$, and perform the microlocal analysis over $U_{0}$.
We keep the same notations from \S\ref{cj microlocal}.

\begin{thm}\label{morse primitive}

Let $x\in F^{k}\cap\cb_{\beta}$ be a critical point of $\varphi_{k+1}$ with critical value $v$, let $\lambda$ be the Morse index of the restriction of $\varphi_{k+1}$ to $F^{k}\cap\cb_{\beta}$.
Let $a$ be a sufficiently small positive real number.

\begin{enumerate}[topsep=0pt, itemsep=0pt, label=$(\arabic*)$]

\item

If $\cb_{\beta}$ is not one of the strict $\delta$-strata, then
$$
\Big(\varprojlim_{m}\varinjlim_{n}H^{*}\big(F^{k}_{<v+a}, F^{k}_{<v-a}; \, j_{n, !*}R^{i}f^{\sm}_{n,*}\bz/\ell^{m}\big)\Big)\otimes_{\bz_{\ell}}\ql=0, \quad \text{for all } i. 
$$

\item 

If $\cb_{\beta}$ is one of the strict $\delta$-strata $\cs^{\circ}_{I_{\bullet}}$, $I_{\bullet}$ a non-trivial partition of $\{1,\cdots, r\}$, then
$$
\Big(\varprojlim_{m}\varinjlim_{n}H^{*}\big(F^{k}_{<v+a}, F^{k}_{<v-a}; \, j_{n, !*}R^{i}f^{\sm}_{n,*}\bz/\ell^{m}\big)\Big)\otimes_{\bz_{\ell}}\ql=0, \quad \text{for all } i. 
$$

\item

If $\cb_{\beta}$ is a strict $\delta$-strata which is not of the form $\cs^{\circ}_{I_{\bullet}}$, $I_{\bullet}$ a non-trivial partition of $\{1,\cdots, r\}$, then
$$
\Big(\varprojlim_{m}\varinjlim_{n} H^{\lambda+\delta}\big(F^{k}_{<v+a}, F^{k}_{<v-a}; \,j_{n, !*}R^{i}f^{\sm}_{n,*}\bz/\ell^{m}\big)\Big)\otimes_{\bz_{\ell}}\ql=
\BigWedge^{i-\delta}\phi_{x}^{*}(\cf_{x}/\ct_{x, \ell}),$$
where $\ct_{x, \ell}$ is the $\ell$-primary component of $\varprojlim_{n} \ct_{n,x}$, and all the other Morse groups vanish. 
\end{enumerate}

\end{thm}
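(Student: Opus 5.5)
We follow the proof of Theorem \ref{morse group}, carried out for each finite level $f_{n}:\overline{\cp}_{n}\to \cb_{n}$ with coefficients $\bz/\ell^{m}$, and then pass to the limit. By stratified Morse theory of Goresky--MacPherson, the Morse group at $x$ equals, up to the shift by $\lambda$, the vanishing cycle $(R\Phi_{\bar\eta_{0}}\,j_{n,!*}R^{i}f^{\sm}_{n,*}\bz/\ell^{m})_{x_{n}}$ for a generic projection $\rp:(N_{x},x)\to(\ba^{1},0)$ of a transversal slice. Since $\varpi_{n}$ is étale and we transport everything to $U_{0}$ via the analytic isomorphisms $U_{n}\cong U_{0}$, the slice and the projection are the same at every level. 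By proper base change, and the support decomposition of $Rf_{n,*}$ from \cite{chen decomposition} (theorem 5.2), the computation reduces to $(R\Phi_{\bar\eta_{0}}Rf_{n,*}\bz/\ell^{m})_{x_{n}}=R\Gamma\big(\overline{\cp}_{n,x_{n}},R\Phi'_{\bar\eta_{0}}\bz/\ell^{m}\big)$, where $\Phi'$ is taken for $\rp\circ f_{n}$. We then have to identify which summand each contribution belongs to.

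\textbf{Case (1).} The argument is verbatim that of Theorem \ref{morse group}(1). Off the strict $\delta$-strata, $\rp\circ f$ is smooth near the fiber. Since $\Phi_{n}$ is étale, $\rp\circ f_{n}$ is smooth as well, so the vanishing cycles vanish at every finite level, and hence also in the limit.

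\textbf{Cases (2) and (3).} At $x$ on a strict $\delta$-stratum, $\cc_{x}$ has $\delta$ nodes and $\Lambda^{0}(\cc_{x})\backslash\xx^{0}(\cc_{x})\cong \overline{P}_{xy}^{\delta}$. By Proposition \ref{describe finite cj alt}, $\overline{\cp}_{n,x_{n}}$ fibres over $J'_{\widetilde{\cc}_{x}}$ with fibre $X'$. The local model of $\rp\circ f_{n}$ near each point of $X'$ is étale-locally the quadratic singularity $\sum x_{i}y_{i}$. The vanishing cycle sheaf is therefore $\bz/\ell^{m}[-2\delta+1]$, supported on the preimage in $X'$ of the point $\bar c$. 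Using the fibration together with Ngô's freeness argument, we get
\[
(R\Phi_{\bar\eta_{0}}Rf_{n,*}\bz/\ell^{m})_{x_{n}}\cong H^{*}(J'_{\widetilde{\cc}_{x}},\bz/\ell^{m})\otimes \bz/\ell^{m}[\ker(\phi_{x}^{*}|_{\ct_{n,x_{n}}})][-2\delta+1],
\]
with one copy for each point over $\bar c$.

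\begin{itemize}
\item If $x\notin\cs_{I_{\bullet}}$, then Proposition \ref{multiple locus} shows the kernel is trivial. The cohomology $H^{*}(J'_{\widetilde{\cc}_{x}})$ of the covering abelian variety is the exterior algebra on $H^{1}$ of the isogenous cover. In the limit over $n$ this $H^{1}$ becomes $\phi_{x}^{*}$ of the lattice enlarged by $\ct_{x,\ell}$, i.e.\ $\phi^{*}_{x}(\cf_{x}/\ct_{x,\ell})$ after tensoring with $\ql$. This is the place where the inverse and direct limits must be tracked carefully.
\item Sorting by perverse degree as in Theorem \ref{morse group} then gives $\BigWedge^{i-\delta}$ in degree $\lambda+\delta$, proving (3).
\end{itemize}

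\textbf{Case (2) and the main obstacle.} For $x\in\cs^{\circ}_{I_{\bullet}}$ the kernel is $(\bz/n)^{l-1}$. The extra points over $\bar c$ produce contributions that, by the decomposition theorem of \cite{chen decomposition}, belong to the summands supported on $\cs_{I_{\bullet},n}$ rather than to $j_{n,!*}R^{i}f^{\sm}_{n,*}$. The main obstacle is to show that the remaining part attached to $j_{n,!*}$ dies in the limit. We plan to argue as follows.
\begin{itemize}
\item The transition maps $\overline{\cp}_{nn'}\to\overline{\cp}_{n}$ induce, on the $j_{!*}$-part of the Morse group, multiplication by the degree of the covering in the $c_{I_{j}}$-directions. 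We will verify this by computing the trace on $\bz/\ell^{m}[\ker]$.
\item Since $\ell$ divides these degrees along the $\ell$-primary tower, the inverse/direct limit system is killed.
\end{itemize}
Concretely, we first check this for $l=2$, where the classes $c_{I_{1}}=-c_{I_{2}}$ generate an honest $\bz/n$-covering of a neighbourhood of the stratum.
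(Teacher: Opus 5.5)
Your overall structure is the paper's: Morse groups computed as vanishing cycles at each level with $\bz/\ell^{m}$-coefficients, the fibration of Proposition~\ref{describe finite cj alt} with Ng\^o's factorization, the $n^{l-1}$ Morse points from Proposition~\ref{multiple locus}, and the same limit of $H^{*}(J'_{\widetilde{\cc}_{x}},\bz/\ell^{m})$ in case~(3). The gap is in case~(2), which is the real content of the statement. Its key claim is only announced, and as you set it up it is false.

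You take the $j_{n,!*}$-contribution to be what remains of $\bz/\ell^{m}[\ker]$ once the decomposition theorem has assigned the other contributions to the summands on $\cs_{I_{\bullet},n}$. That remainder is the invariant line spanned by the norm element $\sum_{g}\vartheta_{g}$.

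\begin{itemize}
\item Vanishing cycles commute with \'etale pull-back, so the transition map along $\overline{\cp}_{nn'}\to\overline{\cp}_{n}$ sends $\vartheta_{g}\mapsto\sum_{h\mapsto g}\vartheta'_{h}$. It therefore sends the norm element to the norm element, and on that line it is an isomorphism.
\item The trace identity (push-forward after pull-back is the degree) gives multiplication by the degree only for push-forward. That is not the direction of $\varinjlim_{n}$.
\item The splitting cannot be repaired. Over $\ql$ one has $j_{n,!*}R^{i}f^{\sm}_{n,*}\ql\cong\varpi_{n}^{*}j_{!*}R^{i}f^{\sm}_{*}\ql$, so the Morse group at every level is the non-zero one of Theorem~\ref{morse group}(2). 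Over $\bz/\ell^{m}$ there is no decomposition theorem; for $\ell\mid n$ the norm line is not even a direct summand of $\bz/\ell^{m}[\ker]$.
\end{itemize}

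The same caveat affects your case~(1). Smoothness of $\rp\circ f_{n}$ kills the vanishing cycles of $Rf_{n,*}\bz/\ell^{m}$. To reach the torsion intermediate extension, you need to argue as the paper does: work with $\ql$-coefficients and discard torsion after $\otimes\ql$.

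The idea you are missing is the paper's. The vanishing cycles $\vartheta_{1},\dots,\vartheta_{n^{l-1}}$ at the points of $\Psi_{n}^{-1}(q_{0})$ are pull-backs of $\vartheta$ and are homologous to one another in the nearby fibre. Hence the relevant vanishing cycle of $\jmath_{n,!*}R\varphi^{\sm}_{n,*}\bz/\ell^{m}$ is $\bz/\ell^{m}\cdot(\vartheta_{1}+\cdots+\vartheta_{n^{l-1}})=\bz/\ell^{m}\cdot(n^{l-1}\vartheta)$. This is zero as soon as $\ell^{m}\mid n^{l-1}$, and the limit then vanishes by the factorization with $H^{*}(J'_{\widetilde{\cc}_{x}})$.

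So the degree factor enters through the passage from $\bz/\ell^{m}[\ker]$ to its image in the nearby fibre (the augmentation, not the invariants). It kills the level-$n$ groups themselves, not merely the transition maps. Your trace computation gives the right answer only after this identification, and justifying that identification is exactly the step your plan leaves out.
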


\begin{proof}

Let $N_{x}$ be a transversal slice to $\cb_{\beta}$ at $x$ in $\cb$, let $\rp:(N_{x}, x)\to (\ba^{1}, 0)$ be a generic projection. Let $N_{x, n}=\varpi_{n}^{-1}(N_{x})$, let $\rp_{n}=\rp\circ \varpi_{n}:N_{x,n}\to \ba^{1}$, let $\bar{\eta}_{0}$ be a geometric generic point of $\ba^{1}_{\{0\}}$, let $R\Phi_{n,\bar{\eta}_{0}}$ be the vanishing cycle functor with respect to $\rp_{n}$.
By the stratified Morse theory of Goresky and MacPherson \cite{gm morse}, up to a shift by the Morse index, the Morse group in question is exactly the limit of the vanishing cycle
$$
\Big(\varprojlim_{m}\varinjlim_{n}(R^{q}\Phi_{n, \bar{\eta}_{0}}(j_{n, !*}R^{i}f_{n, *}^{\sm}\bz/\ell^{m}))_{x_{n}}\Big)\otimes_{\bz_{\ell}}\ql.
$$

$(1)$ If $\cb_{\beta}$ is not one of the strict $\delta$-strata, by theorem 3.3 of \cite{chen root valuation}, we have 
$$
(R^{q}\Phi_{\bar{\eta}_{0}}(j_{!*}R^{i}f^{\sm}_{*}\ql))_{x}=0, \quad \text{for all }q\text{ and } i.
$$
By construction, $f_{n}:\overline{\cp}_{n}\to \cb_{n}$ is the composite of $\Phi_{n}: \overline{\cp}_{n}\to \overline{\cp}\times_{\cb}\cb_{n}$ with the base change of $f$ to $\cb_{n}$. Since the restriction of $\Phi_{n}$ to $\cb_{n}^{\sm}$ is an isogeny of Abelian scheme, and the morphism $\varpi_{n}:\cb_{n}\to \cb^{\circ}$ is \'etale, we have
$$
(R^{q}\Phi_{n,\bar{\eta}_{0}}(j_{n, !*}R^{i}f_{n, *}^{\sm}\ql))_{x_{n}}=0, \quad \text{for all }q\text{ and } i.
$$
This implies that for all $n$ the vanishing cycle
$$
(R^{q}\Phi_{n, \bar{\eta}_{0}}(j_{n, !*}R^{i}f_{n, *}^{\sm}\bz/\ell^{m}))_{x_{n}}
$$
is a fixed torsion group when $m$ is sufficiently large.
Hence
$$
\varprojlim_{m}\varinjlim_{n}(R^{q}\Phi_{n, \bar{\eta}_{0}}(j_{n, !*}R^{i}f_{n, *}^{\sm}\bz/\ell^{m}))_{x_{n}}
$$
remains a torsion group, its tensor product with $\ql$ then vanish.

$(2)$
If $\cb_{\beta}$ is one of the strict $\delta$-strata $\cb_{\delta}^{\circ}$, the restriction to $N_{x}$ of the family $\pi:\cc\to \cb$ is analytically isomorphic to the product of the miniversal deformations of the singularities $\widehat{\cc}_{x, z_{j}}, j\in J$. 
Let $C_{0}$ be a rational projective irreducible curve with $\delta$ ordinary double points, then $\overline{\cp}_{x}$ admits a fibration $\overline{\jac}_{\cc_{x}}\to \jac_{\widetilde{\cc}_{x}}$ with fiber homeomorphic to $\overline{P}_{C_{0}}$. For simplicity, we will denote it by
\begin{equation}\label{cj factorize}
\begin{tikzcd}\overline{P}_{C_{0}} \arrow[r] &\overline{\jac}_{\cc_{x}}\arrow[d]\\
& \jac_{\widetilde{\cc}_{x}}.
\end{tikzcd}
\end{equation}
Let $C_{xy}$ be the irreducible projective rational curve over $\bc$ with a unique ordinary double point $0_{xy}$ as singularity, let $\overline{P}_{xy}$ be its compactified Jacobian, then 
\begin{equation}\label{factor delta node}
\overline{P}_{C_{0}}=\underbrace{\overline{P}_{xy}\times\cdots\times\overline{P}_{xy}}_{\delta}\cong \underbrace{\overline{C}_{xy}\times\cdots\times\overline{C}_{xy}}_{\delta}.
\end{equation}   
Let $\psi:(C,C_{0})\to (B=\ba^{\delta}, 0)$ be a miniversal deformation of $C_{0}$, and let $\varphi:\overline{\pic}_{C/B}\to B$ be its relative compactified Jacobian. Let $\jmath:B^{\sm}\to B$ be the inclusion of the open subscheme over which $\varphi$ is smooth, and let $\varphi^{\sm}$ be the restriction of $\varphi$ to the inverse image of $B^{\sm}$. 
Via the fibration (\ref{cj factorize}), the family $\varphi$ serves as a local model for the restriction $f|_{N_{x}}$, up to the abelian factor $\jac_{\widetilde{\cc}_{x}}$. 
By proposition 7.4.10 of \cite{ngo}, we obtain a factorization 
\begin{equation*}
j_{!*}Rf_{*}^{\sm}\ql
=H^{*}\big(\jac_{\widetilde{\cc}_{x}}, \ql\big)\otimes \jmath_{!*}R\varphi_{*}^{\sm}\ql.  
\end{equation*}
As explained in the proof of theorem \ref{morse group}, the composition $\rp\circ f|_{N_{x}}$ is smooth, except at the point $q_{0}\in \overline{P}_{C_{0}}$ which correspond to $(0_{xy}, \cdots, 0_{xy})\in C_{xy}\times \cdots\times C_{xy}$ under the map (\ref{factor delta node}), and  locally at $q_{0}$ the mapping $\rp\circ f|_{N_{x}}$ can be described as
\begin{equation}\label{local model}
\bc^{2\delta}\to \bc,\quad (x_{1}, y_{1}, \cdots,x_{\delta},y_{\delta})\mapsto \sum_{i=1}^{\delta}x_{i}y_{i},
\end{equation}
which is the miniversal deformation of an ordinary quadratic singularity.
Let $p:(B,0)\to( \ba^{1}, 0)$ be a generic projection, let $R\Phi_{p, \bar{\eta}_{0}}$ be the vanishing cycle functor for the projections $p$. 
With the local model (\ref{local model}), we obtain that
$$
\big(R\Phi_{p, \bar{\eta}_{0}}(\jmath_{!*}R\varphi^{\sm}_{*}\bz/\ell^{m})\big)_{x}=\big(R\Phi_{p, \bar{\eta}_{0}}R\varphi_{*}\bz/\ell^{m})_{x}=\bz/\ell^{m}[-2\delta+1]
$$ 
is generated by a vanishing cycle $\vartheta$.

By proposition \ref{describe finite cj alt}, we have a finite abelian covering $\overline{P}'_{C_{0}}$ of $\overline{P}_{C_{0}}$ determined by $\ker(\phi_{x}^{*}|_{\ct_{n,x}})$, and a finite abelian covering  $J'_{\widetilde{\cc}_{x}}$ of $\jac_{\widetilde{\cc}_{x}}$ determined by the finite group scheme $(\phi^{*}_{x}\ct_{n,x})^{\red}$, such that $\overline{\cp}_{n,x_{n}}$ admits a fibration 
\begin{equation}\label{cj factorized 2}
\begin{tikzcd}\overline{P}'_{C_{0}} \arrow[r] &\overline{\cp}_{n,x_{n}}\arrow[d]\\
& J'_{\widetilde{\cc}_{x}}.
\end{tikzcd}
\end{equation}
By proposition \ref{multiple locus}, the finite abelian covering $\overline{P}'_{C_{0}}\to \overline{P}_{C_{0}}$ is non-trivial if and only if $x\in \cs_{I}^{\circ}$ for a non-trivial partition $I_{\bullet}\vdash\{1,\cdots, r\}$. In this case, the Galois group is isomorphic to $(\bz/n)^{\oplus (l-1)}$.
Moreover, this finite abelian covering extends to a $(\bz/n)^{\oplus (l-1)}$-covering $\Psi_{n}:\overline{P}_{n}\to \overline{\pic}_{C/B}\times_{B}B_{n}$, where $\omega_{n}:B_{n}\to B^{\circ}$ is a quasi-finite \'etale morphism. 
Let $\varphi_{n}:\overline{P}_{n}\to B_{n}$ be the composite of $\Psi_{n}$ with the structure morphism to $B_{n}$, it is a miniversal $(\bz/n)^{(l-1)}$-equivariant deformation of $\overline{P}_{C_{0}}'$.
Via the fibration (\ref{cj factorized 2}), the family $\varphi_{n}$ serves as a local model for the restriction of $f_{n}:\overline{\cp}_{n}\to \cb_{n}$ to the inverse image of $N_{x,n}=N_{x}\times_{\cb}\cb_{n}$, up to the abelian factor $J'_{\widetilde{\cc}_{x}}$.
Let $\jmath_{n}:B_{n}^{\sm}\to B_{n}$ be the inclusion. By proposition 7.4.10 of \cite{ngo}, we obtain a factorization  
\begin{equation}
(j_{n})_{!*}Rf_{n,*}^{\sm}\ql
=H^{*}\big(J'_{\widetilde{\cc}_{x}}, \ql\big)\otimes (\jmath_{n})_{!*}R\varphi_{n,*}^{\sm}\ql,  
\label{ic factorize}
\end{equation}

Let $p_{n}$ be the composite $B_{n}\to B\to \ba^{1}$, and let $R\Phi_{p_{n}, \bar{\eta}_{0}}$ be its associated vanishing cycle functor. 
Since $\Psi_{n}$ is a finite abelian covering of Galois group $(\bz/n)^{\oplus (l-1)}$, the inverse image $\Psi_{n}^{-1}(q_{0})$ consists of $n^{(l-1)}$ points $q_{1}, \cdots, q_{n^{(l-1)}}$, the composite $p_{n}\circ \varphi_{n}$ is smooth except at these points, and the singularity there is an ordinary double point as before. Let $\vartheta_{i}$ be the vanishing cycle at $q_{i}$, $i=1, \cdots, n^{(l-1)}$, they are pull-back of the vanishing cycle $\vartheta$ via $\Psi_{n}$, and they are homologous to each other, so
\begin{align*}
\big(R\Phi_{p_{n}, \bar{\eta}_{0}}(\jmath_{n, !*}R\varphi^{\sm}_{n,*}\bz/\ell^{m})\big)_{x_{n}}&=\big(R\Phi_{p_{n}, \bar{\eta}_{0}}(R\varphi_{n,*}\bz/\ell^{m})\big)_{x_{n}}
\\
&=\bz/\ell^{m}\cdot (\vartheta_{1}+\cdots+\vartheta_{n^{(l-1)}})[-2\delta+1]
\\
&=\bz/\ell^{m}\cdot (n^{(l-1)}\vartheta)[-2\delta+1],
\end{align*}
whence
$$
\varprojlim_{m}\varinjlim_{n}\big(R\Phi_{p_{n}, \bar{\eta}_{0}}(\jmath_{n, !*}R\varphi^{\sm}_{n,*}\bz/\ell^{m})\big)_{x_{n}}=0.
$$
By the factorization (\ref{ic factorize}), we have
$$
\varprojlim_{m}\varinjlim_{n}\big(R\Phi_{n, \bar{\eta}_{0}}(j_{n})_{!*}Rf_{n,*}^{\sm}\bz/\ell^{m}\big)_{x_{n}}
\subset H^{*}\big(J'_{\widetilde{\cc}_{x}}, \ql\big)\otimes\varprojlim_{m}\varinjlim_{n}\big(R\Phi_{p_{n}, \bar{\eta}_{0}}(\jmath_{n, !*}R\varphi^{\sm}_{n,*}\bz/\ell^{m})\big)_{x_{n}}=0.
$$
This finishes the proof of the case $(2)$ of the theorem.


In case $(3)$, the group $\ker(\phi_{x}^{*}|_{\ct_{n, x}})$ is trivial, so $\overline{P}'_{C_{0}}=\overline{P}_{C_{0}}$ in the fibration (\ref{cj factorized 2}). 
The problem is reduced to studying how the isogeny $J_{\widetilde{\cc}_{x}}'\to \jac_{\widetilde{\cc}_{x}}$ varies as we go up in the tower $f_{n}$.
Recall that $J'_{\widetilde{\cc}_{x}}$ is the finite abelian covering of $\jac_{\widetilde{\cc}_{x}}$ associated to the finite group scheme $\phi_{x}^{*}(\ct_{n, x})$, i.e. the quotient of 
$
J'_{\widetilde{\cc}_{x}}$ by $\phi_{x}^{*}(\ct_{n, x})$ is isomorphic to $\jac_{\widetilde{\cc}_{x}},
$
so we have the exact sequence
$$
0\to H_{1}\big(J'_{\widetilde{\cc}_{x}}, \bz\big)\to H_{1}\big(\jac_{\widetilde{\cc}_{x}}, \bz\big) \to  \phi_{x}^{*}(\ct_{n, x})\to 0.
$$
This implies that
\begin{equation*}
\Big(\varprojlim_{m}\varinjlim_{n}H^{*}(J'_{\widetilde{\cc}_{x}}, \bz/\ell^{m})\Big)\otimes_{\bz_{\ell}}\ql=\BigWedge^{\bullet}\phi_{x}^{*}(\cf_{x}/\ct_{x, \ell}).
\end{equation*}
With the factorization (\ref{ic factorize}), we obtain
$$
\Big(\varprojlim_{m}\varinjlim_{n}\big(R^{\delta-1}\Phi_{n, \bar{\eta}_{0}}(j_{n, !*}R^{i}f^{\sm}_{n,*}\bz/\ell^{m})\big)_{x_{n}}\Big)\otimes_{\bz_{\ell}}\ql=\BigWedge^{i-\delta}\phi_{x}^{*}(\cf_{x}/\ct_{x, \ell}),
$$
and all the other vanishing cycles vanish. This finishes the proof of the case $(3)$ of the theorem.

\end{proof}

\subsection{Microlocal analysis for the $\xi$-stable quotient}\label{micro xi-stable}

Consider the miniversal deformation $\ppi:(\kxx, X_{\gamma})\to (\kbb, 0)$ of the spectral curve $X_{\gamma}$, and its relative $\xi$-stable compactified Jacobian $\ff^{\xi}:\overline{\kjj}^{\,\xi}\to \kbb$.
By theorem \ref{cj support} and proposition \ref{cj springer stable}, we have
\begin{equation}\label{quotient as fiber}
H^{i}\big(\xx_{\gamma}^{0,\, \xi}/\rS, \ql\big)=H^{i}\big(\overline{J}{}^{\,\xi}_{X_{\gamma}}, \ql\big)=\bigoplus_{i'=0}^{i} \ch^{i-i'}(\jj_{!*}R^{i'}\ff^{\xi,\sm}_{*}\ql)_{0}, \quad i=0, \cdots, 2p_{a}(X_{\gamma}).
\end{equation}
We will make microlocal analysis of the complex $\jj_{!*}R^{i}\ff^{\xi,\sm}_{*}\ql=\jj_{!*}\BigWedge^{i}R^{1}\ff^{\xi,\sm}_{*}\ql$. 
For simplicity, we denote $\cg=R^{1}\ppi^{\sm}_{*}\ql=R^{1}\ff^{\xi, \sm}_{*}\ql$ and $\cg^{i}=\BigWedge^{i}\cg$ for $i=0, \cdots, 2p_{a}(X_{\gamma})$. 
By proposition \ref{quotient globalized}, for any sufficient small open neighborhood $U$ of $0\in \kbb$, we have $\cg|_{U-\Delta}=\ce/\ce^{\perp}$.
As before, we assume that $\dim(\kbb)=\tau_{\gamma}>\delta_{\gamma}$, which excludes the case that $\gamma=\diag(\gamma_{1}, \gamma_{2})\in \ggl_{2}(F)$ such that $\val(\gamma_{1}-\gamma_{2})=1$.

Let $\big(\{\kxx_{\alpha}\}_{\alpha\in \overline{\Omega'}}, \{\kbb_{\beta}\}_{\beta\in \Omega'}\big)$ be the canonical Whitney stratification for the family $\ppi:\kxx\to \kbb$ (cf. \cite{mather stratification}, proposition 10.1). 
Then the collection of the strict $\delta$-strata is part of the Whitney stratification $\{\kbb_{\beta}\}_{\beta\in \Omega'}$.
By construction, there exists sufficiently small open neighborhoods 
$0\in V\subset \cc$ and $0\in V'\subset \kxx$, such that the restriction to $V$ of $\pi:(\cc, C_{\gamma})\to (\cb, 0)$ and the restriction to $V'$ of $\ppi:(\kxx, X_{\gamma})\to (\kbb, 0)$  are miniversal deformations of the singularity $\spf(\co[\gamma])$. 
Hence we can find sufficiently small open neighborhoods $0\in U\subset \cb$ and $0\in U'\subset \kbb$, such that the restriction of the Whitney stratification $\{\cb_{\beta}\}_{\beta\in \Omega}$ and $\{\kbb_{\beta'}\}_{\beta'\in \Omega'}$ to $U$ and $U'$ respectively are isomorphic. 
Since we are performing microlocal analysis at the vicinity of $0$, we can transfer the iterated projections defined for $\cb$ in \S\ref{cj microlocal} to $\kbb$.
For simplicity, we keep the same notation, this should not cause confusion. 

We fix an embedding $(\kbb, 0)\to (\ba^{N}, 0)$. 
Let $S_{0}$ be the intersection of $\kbb$ with a generic linear subspace $H_{0}$ of $\ba^{N}$ of codimension $\tau_{\gamma}-(\delta_{\gamma}+1)$ such that it intersects transversally with the tangent cone at $0$ of all the $\delta$-strata. We then cut off the locus where its intersection with the union of all the $\delta$-strata is not transversal, to get a dense open subscheme $\kbb'\subset S_{0}$ containing $0$. 
Let $\ppi':\kxx'\to \kbb'$ be the restriction of the family $\ppi:\kxx\to \kbb$ to the inverse image of $\kbb'$, consider its relative $\xi$-stable compactified Jacobian
$$
(\ff')^{\xi}:\overline{\kjj'}^{\,\xi}=\overline{\pic}{}^{\,0,\,\xi}_{\kxx'/\kbb'}\to \kbb'.
$$
It is the restriction of the family $\ff^{\xi}:\overline{\kjj}^{\,\xi}\to \kbb$ to the inverse image of $\kbb'$.

\begin{prop}

The total space $\overline{\kjj'}^{\,\xi}$ is regular, and we have
\begin{equation*}
R(\ff')^{\xi}_{*}\ql=\bigoplus_{i=0}^{2p_{a}(X_{\gamma})} \jj'_{!*}R^{\,i}{(\ff')}^{\xi, \sm}_{*}\ql[-i],
\end{equation*}
where $\jj':(\kbb')^{\sm}\to \kbb'$ is the natural inclusion. In particular,
$$
H^{i}\big(\xx_{\gamma}^{0,\, \xi}/\rS, \ql\big)=H^{i}\big(\overline{J}{}^{\,\xi}_{X_{\gamma}}, \ql\big)=\bigoplus_{i'=0}^{i} \ch^{i-i'}(\jj'_{!*}R^{i'}(\ff')^{\xi,\sm}_{*}\ql)_{0}, \quad i=0, \cdots, 2p_{a}(X_{\gamma}).
$$
\end{prop}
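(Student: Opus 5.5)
We follow the proof of the analogous statement for $f':\overline{\cp}{}'\to\cb'$ recalled in \S\ref{cj microlocal} (from \cite{chen root valuation}), replacing Ng\^o's support theorem by Theorem \ref{cj support}. There are three steps: regularity of the total space, the support decomposition for $(\ff')^{\xi}$, and the identification of the stalk at $0$.

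\emph{Regularity.} By Theorem \ref{xi total}, $\overline{\kjj}^{\,\xi}$ is smooth and $\ff^{\xi}$ is flat of relative dimension $p_{a}(X_{\gamma})$. Since $\kbb'$ is cut out of $\kbb$ by linear equations, $\overline{\kjj'}^{\,\xi}$ is cut out of $\overline{\kjj}^{\,\xi}$ by the pulled-back equations. It is therefore regular at a point $y$ as soon as $\ff^{\xi}$ composed with the projection $\ba^{N}\to \ba^{N}/H_{0}$ is submersive at $y$. Let $\beta$ be the stratum of $b=\ff^{\xi}(y)$. Transversality of $H_{0}$ to the $\delta$-strata (and to the Whitney strata, which near $0$ are identified with those of $\cb$) reduces this to the statement that the image of $d\ff^{\xi}_{y}$ together with $T_{b}\kbb_{\beta}$ spans $T_{b}\kbb$. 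To prove that, we use that near any $b$ the family $\ppi$ is a product of versal deformations of the singularities of $\kxx_{b}$ and of the local smoothness of the $\xi$-stable compactified Jacobian. The argument is the same as in \cite{chen root valuation} and also uses that the image of the differential contains the normal directions to the $\delta$-stratum. The non-transversal locus was removed in defining $\kbb'$, so regularity holds everywhere over $\kbb'$.

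\emph{Support decomposition.} $(\ff')^{\xi}$ is proper with regular source, so $R(\ff')^{\xi}_{*}\ql$ is pure and the decomposition theorem applies. We must rule out supports other than $\kbb'$. Two approaches are available. The first is to pull back the decomposition of Theorem \ref{cj support} along the inclusion $\kbb'\hookrightarrow\kbb$, using proper base change. Since $H_{0}$ is transversal to the Whitney stratification on $\kbb'$, this restriction is non-characteristic, so $\jj_{!*}$ commutes with restriction up to the shift $[-\mathrm{codim}]$. This gives $\jj_{!*}\cg^{i}|_{\kbb'}=\jj'_{!*}(\cg^{i}|_{\kbb'^{\sm}})$, and hence the stated formula. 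The second, as a fallback, is Ng\^o's support inequality: an extra support $Z$ would require $\mathrm{codim}\,Z\le\delta$ on the stratum, and the bound $\dim\kbb'=\delta_{\gamma}+1$ together with transversality forces $Z=\kbb'$, as for $\cb'$.

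\emph{Stalk at $0$ and main obstacle.} Taking the stalk at $0$ and applying proper base change gives $H^{*}(\overline{J}{}^{\,\xi}_{X_{\gamma}})$; Proposition \ref{cj springer stable} then identifies this with $H^{*}(\xx^{0,\xi}_{\gamma}/\rS)$. The main obstacle is the non-characteristic restriction step, where one must check that $H_{0}$ (cut down to $\kbb'$) is transversal to every Whitney stratum. This requires transferring the genericity of $H_{0}$ from $\cb$ to $\kbb$ via the local isomorphism of stratifications near $0$. I would try to arrange this by shrinking $\kbb'$ to the neighbourhood $U'$; away from $U'$ only the stalk at $0$ matters, so shrinking is harmless for the final formula.
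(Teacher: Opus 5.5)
Your main route to the decomposition, restricting Theorem~\ref{cj support} to $\kbb'$ and commuting $\jj_{!*}$ with a non-characteristic restriction, fails exactly at $0$. Shrinking $\kbb'$ cannot repair this. The Whitney stratum through $0$ lies in the equisingular stratum $\Delta_{\mu}$, so its codimension in $\kbb$ is at least that of $\Delta_{\mu}$, and this can exceed $\dim\kbb'=\delta_{\gamma}+1$. In that case no linear section of dimension $\delta_{\gamma}+1$ is transversal to it at $0$. For example, take $\gamma=\diag(\varep^{m},-\varep^{m})\in\ggl_{2}(F)$ with $m\ge 3$. The singularity is of type $A_{2m-1}$, so $\tau_{\gamma}=2m-1$, $\delta_{\gamma}=m$, and $\Delta_{\mu}=\{0\}$ has codimension $2m-1>m+1$.

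Since the restriction is characteristic at $0$, nothing formal makes $\jj_{!*}\cg^{i}|_{\kbb'}$ agree with $\jj'_{!*}(\cg^{i}|_{(\kbb')^{\sm}})$ near $0$. The support condition for $\jj'_{!*}$ at $0\in\kbb'$ demands $\ch^{q}(\jj_{!*}\cg^{i})_{0}=0$ for all $q\ge\delta_{\gamma}+1$. The intersection-complex conditions on $\kbb$ only give this for $q$ at least the codimension of the stratum through $0$, which can be larger. This extra vanishing is genuine geometric input.

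That is why the paper only asks $H_{0}$ to be transversal to the $\delta$-strata, which have codimension $\delta\le\delta_{\gamma}$. It then re-proves the support theorem on $\kbb'$ itself, following \cite{laumon lemme 2}, th\'eor\`eme 10.5.1. There $(\ff')^{\xi}$ is an abelian fibration in Ng\^o's sense, and the Severi inequality holds on $\kbb'$ by construction. Your fallback points in this direction, but as written it is only a dimension count. Ng\^o's inequality together with $\delta$-regularity still allows supports with $\mathrm{codim}\,Z=\delta_{Z}$. Excluding those needs the rest of the Ng\^o--Chaudouard--Laumon argument, not transversality.

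The regularity step has the same defect at $0$, and its reduction is inverted. Submersivity of $\overline{\kjj}^{\,\xi}\to\ba^{N}/H_{0}$ at $y$ means $\mathrm{Im}(d\ff^{\xi}_{y})+(T_{b}\kbb\cap H_{0})=T_{b}\kbb$. Given transversality of $H_{0}$ to $\kbb_{\beta}$, this follows from $T_{b}\kbb_{\beta}\subset\mathrm{Im}(d\ff^{\xi}_{y})$. It does not follow from $\mathrm{Im}(d\ff^{\xi}_{y})+T_{b}\kbb_{\beta}=T_{b}\kbb$.

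Your claim that the image contains the normal directions to the $\delta$-stratum is also false in general. Near the most singular point of a fibre over a strict $\delta$-stratum, the map is, up to smooth factors along the stratum, $(u_{1},v_{1},\dots,u_{\delta},v_{\delta})\mapsto(u_{1}v_{1},\dots,u_{\delta}v_{\delta})$, and its differential at the origin is $0$. Over $0$, transversality to the Whitney stratum is unavailable anyway. So regularity along the central fibre, the one that matters, cannot come from such an argument; the paper gets it directly from \cite{miglio 2}, theorem 1.9.

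Your last step (stalk at $0$, proper base change, Proposition~\ref{cj springer stable}) is fine once the decomposition over $\kbb'$ is established.
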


\begin{proof}

The regularity of the total space $\overline{\kjj'}^{\,\xi}$ is a consequence of \cite{miglio 2}, theorem 1.9. 
The proof of the support theorem follows the same lines of reasoning for \cite{laumon lemme 2}, th\'eor\`eme 10.5.1. 
Indeed, the fibration $(\ff')^{\xi}:\overline{\kjj'}^{\,\xi}\to \kbb'$ is an abelian fibration in the sense of Ng\^o, and the inequality of Severi holds for this family by construction. Hence, all the ingredients required for the proof are present in the current setting.


\end{proof}

For simplicity, we keep the notation $\cg^{i}$ for the sheaf $R^{\,i}{(\ff')}^{\xi, \sm}_{*}\ql$.
We take a flag of generic linear subspaces
$$
\{0\}= H_{\delta_{\gamma}+1}\subsetneq H_{\delta_{\gamma}}\subsetneq \cdots \subsetneq H_{2}\subsetneq H_{1}\subsetneq H_{0}, \quad \mathrm{codim}_{H_{0}}(H_{k})=k \text{ for all }1\le k \le \delta_{\gamma}+1,
$$
for which two properties concerning the local polar varieties of the relevant Whitney strata, proposition 4.11 and 4.14 of \cite{chen root valuation}, hold. 
For $k=1, \cdots, \delta_{\gamma}$, let $\pi_{k}:H_{0}\to \bc^{k}$ be the projection with kernel $H_{k}$, let $t_{k}\in \bc^{k}$ be a geometric generic point sufficiently close to $0$. The projection $\pi_{1}$ induces a generic projection
$
p_{1}:\kbb'\cap B_{0}(\ep)\to \bc^{1}, 
$
and the projection $\pi_{k+1}$ induces an affine map 
$$
p_{k+1}:\kbb'\cap B_{0}(\ep)\cap\pi_{k}^{-1}(t_{k})\to \bc^{1}, \quad \text{for }k=1,\cdots, \delta_{\gamma}.
$$ 
The points $\{t_{k}\}_{k=1}^{\delta_{\gamma}}$ have been taken such that $\cb'\cap B_{0}(\ep)\cap \pi_{k+1}^{-1}(t_{k+1})$ coincides with the fiber of $p_{k+1}$ at a generic point in ${\rm Im}(p_{k+1})$. 
For simplicity, we denote 
$$F^{k}=\begin{cases} \kbb'\cap B_{0}(\ep),& \text{for }k=0,\\
\kbb'\cap B_{0}(\ep)\cap\pi_{k}^{-1}(t_{k}), &\text{for }k=1,\cdots, \delta_{\gamma},\end{cases}
$$ 
it is of dimension $\delta_{\gamma}+1-k$. 
Note that $F^{\delta_{\gamma}}$ is a generic line in $\kbb'\cap B_{0}(\epsilon)$, and a generic geometric point on it is the same as a generic geometric point of $\kbb'\cap B_{0}(\epsilon)$, we will denote it by $\bar{\xi}$.  
We then replace the projection $p_{k+1}$ by a stratified Morse function 
$$
\varphi_{k+1}:F^{k}\to \br^{1}, \quad k=0, \cdots, \delta_{\gamma},
$$ 
with respect to the stratification of $F^{k}$ induced from the  canonical Whitney stratification $\{\kbb_{\beta}\}_{\beta\in \Omega}$, which approximates sufficiently well to the function ${\rm Re}(p_{k+1})$. 
Without loss of generality, we can assume that the critical points of $\varphi_{k+1}$ have distinct critical values.
For $r\in \br$, let $F^{k}_{<r}=\varphi_{k+1}^{-1}(-\infty, r)$.
Let $v_{k}$ be the smallest critical value of $\varphi_{k+1}$, then
$$
H^{*}\big(F^{k}_{<v_{k}},  \jj'_{!*}\cg^{i} \big)= \begin{cases}
\cg^{i}_{\bar{\xi}}, &\text{ for }k=\delta_{\gamma},\\
H^{*}\big(F^{k+1}, \jj'_{!*}\cg^{i}\big), &\text{ for }k=\delta_{\gamma}-1,\cdots, 0,\end{cases}
$$
and the cohomology group $H^{*}\big(F^{k},  \jj'_{!*}\cg^{i} \big)$ can be built up from it and the Morse groups at the critical points $v$ of $\varphi_{k+1}$ via the long exact sequence
{\small
\begin{equation*}
\cdots \to H^{*-1}\big(F^{k}_{<v-a}, \jj_{!*}\cg^{i}\big) \xrightarrow{\partial} H^{*}\big(F^{k}_{<v+a}, F^{k}_{<v-a}; \jj_{!*}\cg^{i}\big)  \to  H^{*}\big(F^{k}_{<v+a},  \jj_{!*}\cg^{i}\big) \to H^{*}\big(F^{k}_{<v-a}, \jj_{!*}\cg^{i}\big) \to \cdots.
\end{equation*}
}

\noindent In this way, we can compute iteratively the cohomologies $H^{*}\big(F^{k},  \jj'_{!*}\cg^{i} \big)$, starting from $\cg^{i}_{\bar{\xi}}$ and arriving at $H^{*}\big(F^{0},  \jj'_{!*}\cg^{i} \big)=\ch^{*}(\jj_{!*}\cg^{i})_{0}$.

\begin{thm}\label{xi morse group}

Let $x\in F^{k}\cap\kbb_{\beta}$ be a critical point of $\varphi_{k+1}$ with critical value $v$, let $\lambda$ be the Morse index of the restriction of $\varphi_{k+1}$ to $F^{k}\cap\kbb_{\beta}$.
Let $a$ be a sufficiently small positive real number.

\begin{enumerate}[topsep=0pt, itemsep=0pt, label=$(\arabic*)$]

\item

If $\kbb_{\beta}$ is not one of the strict $\delta$-strata, then
$$
H^{*}\big(F^{k}_{<v+a}, F^{k}_{<v-a}; \jj'_{!*}\cg^{i}\big)=0,\quad \text{for all }i.
$$

\item If $\kbb_{\beta}$ is one of the strict $\delta$-strata $\kbb^{\circ}_{I_{\bullet}}$, $I_{\bullet}$ a non-trivial partition of $\{1,\cdots, r\}$, then
$$
H^{*}\big(F^{k}_{<v+a}, F^{k}_{<v-a}; \jj'_{!*}\cg^{i}\big)=0,\quad \text{for all }i.
$$

\item

If $\kbb_{\beta}$ is a strict $\delta$-strata not of the form $\kbb_{I_{\bullet}}^{\circ}$, $I_{\bullet}$ a non-trivial partition of $\{1,\cdots, r\}$, then the curve $\kxx_{x}$ is irreducible. Let ${\kxx}^{\flat}_{x}$ be its normalization and let 
$$
H^{*}(\jac_{{\kxx}^{\flat}_{x}}, \ql)=\bigoplus_{n=0}^{2(p_{a}(X_{\gamma})-\delta)}\boldsymbol{\Lambda}_{x}^{n}[-n],
$$ 
then for all $i$ we have
$$
H^{q}\big(F^{k}_{<v+a}, F^{k}_{<v-a}; \jj'_{!*}\cg^{i}\big)=
\begin{cases}
\boldsymbol{\Lambda}_{x}^{{i-\delta}}, & \text{ if }q=\lambda+\delta,
\\
0,&\text{otherwise}.
\end{cases}
$$

\end{enumerate}

\end{thm}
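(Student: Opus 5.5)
We follow the proof of theorem \ref{morse group}. Let $N_{x}$ be a transversal slice to $\kbb_{\beta}$ at $x$ in $\kbb$, and let $\rp:(N_{x},x)\to(\ba^{1},0)$ be a generic projection with vanishing cycle functor $R\Phi_{\bar{\eta}_{0}}$. By the stratified Morse theory of Goresky--MacPherson, the Morse group $H^{q}(F^{k}_{<v+a},F^{k}_{<v-a};\jj'_{!*}\cg^{i})$ equals, up to a shift by $\lambda$, the stalk $(R^{q-\lambda}\Phi_{\bar{\eta}_{0}}\jj'_{!*}\cg^{i})_{x}$. By the support decomposition for $(\ff')^{\xi}$ in the preceding proposition and proper base change, it suffices to compute $(R\Phi_{\bar\eta_0}R\ff^{\xi}_{*}\ql)_{x}$ through the vanishing cycles of $\rp\circ\ff^{\xi}|_{N_{x}}$ on the fiber $\overline{J}{}^{\,\xi}_{\kxx_{x}}$. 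We then split into the three cases.

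\textbf{Case (1).} If $\kbb_{\beta}$ is not a strict $\delta$-stratum, we argue as in case $(1)$ of theorem \ref{morse group}: the composite $\rp\circ\ff^{\xi}|_{N_{x}}$ is smooth near the fiber over $0$. Since $\overline{\kjj}^{\,\xi}$ is smooth (theorem \ref{xi total}), smoothness reduces to surjectivity of the differential. As in \cite{chen root valuation}, thm.\ 3.3, this can be checked on the deformation space of the singularities of $\kxx_{x}$, because $\xi$-stability is an open condition that plays no role in the local computation. Local acyclicity then gives vanishing, and the support theorem (theorem \ref{cj support}) transfers this to each $\jj'_{!*}\cg^{i}$.

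\textbf{Cases (2) and (3).} For $x$ on a strict $\delta$-stratum, $\kxx_{x}$ has exactly $\delta$ nodes. If $x\in\kbb^{\circ}_{I_{\bullet}}$, the nodes include the $\delta_{I_{\bullet}}$ intersection points between the components $X_{I_{j}}$. Otherwise we show that $\kxx_{x}$ is irreducible. The strict $\delta$-strata near $0$ match those of \S\ref{cj microlocal} via the analytic identification with $\cb$. A nodal fiber is reducible exactly when some subcollection of branches stays separated, which is the defining condition of $\kbb_{I_{\bullet}}$ by the construction of the subfamilies in \S\ref{deform XI}. In the irreducible case, the $\xi$-stability condition is vacuous, so $\overline{J}{}^{\,\xi}_{\kxx_{x}}=\overline{\jac}_{\kxx_{x}}$. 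We then have Laumon's fibration over $\jac_{\kxx^{\flat}_{x}}$ with fiber $\overline{P}_{xy}^{\delta}$. The local model $\sum x_{i}y_{i}$, together with the freeness from \cite{ngo}, prop.\ 7.4.10, gives the vanishing cycle $\boldsymbol{\Lambda}^{\bullet}_{x}[-2\delta+1]$. By the support theorem this equals $\boldsymbol{\Lambda}^{i-\delta}_{x}$ in degree $\lambda+\delta$, exactly as in case $(2)$ of theorem \ref{morse group}.

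\textbf{Main obstacle: case (2).} In the reducible case we describe $\overline{J}{}^{\,\xi}_{\kxx_{x}}$ through the partial normalization $\kxx^{\flat}_{x}=\bigsqcup_{j}\kxx_{x,I_{j}}$ at the $\delta_{I_{\bullet}}$ separating nodes. Locally at such a node, gluing data on a reducible nodal curve give a $\bbg_{m}$ worth of line-bundle gluings, and the stability condition selects the degree distribution. The key point is to show that near the critical point the vanishing cycles of $\rp\circ\ff^{\xi}$ cancel. For a separating node of a curve of compact type, the local model $xy$ on $\overline{J}{}^{\,\xi}$ is replaced by a $\bbg_{m}$-family: the two degree distributions across the node get glued. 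The genericity of $\xi$ allows only one of them, so the family is locally a smooth $\bbg_{m}$-bundle degeneration with no critical point.

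Concretely, we would compute the local model of $\ff^{\xi}$ at a $\xi$-stable sheaf that fails to be locally free at a separating node. Using the transversal-slice product structure and the factorization via proposition \ref{strata local}, the composite should reduce to $\bc^{2}\to\bc$, $(x,y)\mapsto xy$, in which the stable locus removes one of the axes. Such a map is smooth. This gives $(R\Phi\,R\ff^{\xi}_{*}\ql)_{x}=0$, and the support theorem then yields (2). As a sanity check for this step, we would compare it with theorem \ref{morse primitive}$(2)$, where the same cancellation appeared in the limit.
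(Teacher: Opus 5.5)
Your cases (1) and (3) follow the paper's route. For (1) this means the transversal slice $V$ (your $N_{x}$), Goresky--MacPherson, smoothness of $\rp\circ\ff^{\xi}_{V}$, local acyclicity and the support theorem. For (3) it means irreducibility of $\kxx_{x}$ and the computation of \cite{chen root valuation}, prop.~3.8(2). One correction for (1): when $\kxx_{x}$ is reducible you must use the regularity criterion for fine compactified Jacobians of reducible curves (\cite{miglio 2}, theorem 1.9), not the irreducible one. Since $\dim V-1\ge\delta$, this only affects the citation.

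The genuine gap is in case (2). Your mechanism is that the nodes joining the $X_{I_j}$ are separating and that $\xi$-stability deletes one axis of the local model $uv=t$; this fails in general.
\begin{itemize}
\item The components of $\kxx_{x}$ usually meet in several points, so these nodes are non-separating.
\item Sheaves that are not locally free at such a node can be $\xi$-stable. Since the stable locus is open, the full $uv=t$ survives around them.
\item Example: take the ordinary triple point $\gamma=\diag(a_{1},a_{2},a_{3})$ with $\val(a_{i}-a_{j})=1$, and $I_{\bullet}=\{\{1\},\{2,3\}\}$. A general fibre over $\kbb^{\circ}_{I_{\bullet}}$ is two rational curves meeting in two nodes, so $\delta=2$ and $r=2$. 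Its fine compactified Jacobian is a cycle of two rational curves. At each of its two singular points the sheaf is non-free at exactly one node, and both branches are stable.
\end{itemize}

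What actually kills the vanishing cycles is a global constraint. A $\xi$-stable sheaf is simple, so the partial normalization of $\kxx_{x}$ at its set $S$ of non-locally-free nodes must be connected. This forces $S$ to miss at least $r(\kxx_{x})-1\ge 1$ of the $\delta$ nodes. Smooth-locally near any point of $(\ff^{\xi}_{V})^{-1}(x)$, the total space is $\prod_{p\in S}\{u_{p}v_{p}=t_{p}\}\times\prod_{q\notin S}\bc_{t_{q}}\times(\text{smooth})$. A generic projection $\rp$, whose linear part involves every $t_{q}$, then has nonvanishing differential because of a free coordinate $t_{q}$ with $q\notin S$. The quadratic singularity $\sum_{i=1}^{\delta}x_{i}y_{i}$ that produces the vanishing cycle in case (3) would need $S$ to contain all $\delta$ nodes, so it never occurs. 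The paper encodes exactly this as the inequality $\dim V-1=\delta-1\ge\delta+1-r(\kxx_{x})$, fed into \cite{miglio 2}, theorem 1.9. That gives smoothness of $\rp\circ\ff^{\xi}_{V}$ and treats (1) and (2) uniformly.

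Your proposed sanity check against theorem \ref{morse primitive}(2) concerns a different phenomenon. There the vanishing cycle is nonzero at every finite level and disappears only in the limit through the factor $n^{l-1}$. Here it vanishes identically.
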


\begin{proof}

The proof follows the same lines of reasoning as proposition 3.8 and theorem 3.3 of \cite{chen root valuation}. Let $V$ be a transversal slice to $\kbb_{\beta}$ at $x$, let $p:V\to \ba^{1}$ be a generic projection sending $x$ to $0$. 
Let $\bar{\eta}_{0}$ be a geometric generic point of the strict Henselization $\ba^{1}_{\{0\}}$, let $R\Phi_{\bar{\eta}_0}$ be the vanishing cycle functor with respect to the projection $p$.
As explained in \cite{chen root valuation}, up to a shift by the Morse index $\lambda$, the Morse group is equal to the vanishing cycle $R^{\dim(V)-1}\Phi_{\bar{\eta}_0}\jj'_{!*}\cg^{i}$ at $x$.

Take a sufficiently small open neighborhood $\widetilde{U}$ of the unique singularity $0$ of $X_{\gamma}$ in $\kxx$, such that the restriction of the family $\ppi:\kxx\to \kbb$ to $\widetilde{U}$ is a miniversal deformation of the singularity $\spf(\co[\gamma])$, let $U\subset\ppi(\widetilde{U})$ be an open neighborhood of $0\in \kbb$. By construction, for any $x'\in U$, the reducibility of the fiber $\kxx_{x'}$ is the same as that of the singularity, i.e. the number of the irreducible components $r(\kxx_{x'})$ of $\kxx_{x'}$ is equal to that of the singularities in $\kxx_{x'}\cap \widetilde{U}$. 
As explained in \S\ref{deform XI}, $\kxx_{x'}\cap \widetilde{U}$ has multiple irreducible components only when $x'\in \kbb_{I_{\bullet}}$ for some non-trivial partition $I_{\bullet}\vdash\{1,\cdots,r\}$.
Hence $r(\kxx_{x'})>1$ if and only if $x'\in \kbb_{I_{\bullet}}$.

Let $\delta(\kxx_{x})$ be the $\delta$-invariant of the curve $\kxx_{x}$. In case that $\kbb_{\beta}$ is not one of the strict $\delta$-strata, we have $\dim(V)>\delta(\kxx_{x})$; in case that $\kbb_{\beta}$ is one of the strict $\delta$-strata $\kbb^{\circ}_{I_{\bullet}}$, $I_{\bullet}$ a non-trivial partition of $\{1,\cdots, r\}$, we have the equality $\dim(V)=\delta(\kxx_{x})$, and we have the inequality $r(\kxx_{x})>1$ as explained above. 
So in both case $(1)$ and $(2)$ the dimension of the slice $V$ satisfies the inequality
$$
\dim(V)> \delta(\kxx_{x})+1-r(\kxx_{x}).
$$
Consider the subfamily $\ff^{\xi}_{V}: \overline{\kjj}^{\,\xi}\times_{\kbb}V\to V$ and its composition with the projection $p:V\to \ba^{1}$.
For any $a\in \im(p)$, it is clear that 
$$
\dim(p^{-1}(a))=\dim(V)-1\ge \delta(\kxx_{x})+1-r(\kxx_{x}).
$$ 
By theorem 1.9\footnote{There is a typo in the statement of the theorem, the invariant $\gamma(C)$ in the theorem should be the number of irreducible components of $C$, as indicated in the table of \S2.1.4 of the paper.} of \cite{miglio 2}, the subspace $(p\circ \ff_{V}^{\xi})^{-1}(a)$ is regular along any fiber $(\ff_{V}^{\xi})^{-1}(x'), x'\in p^{-1}(a)$. 
In particular, $p\circ \ff_{V}^{\xi}$ is smooth at the vicinity of $0\in \ba^{1}$.  
The local acyclicity of the smooth morphism then implies that
$$
R\Phi'_{\bar{\eta}_{0}}\bq_{\ell}\big|_{ (\ff^{\xi}_{V})^{-1}(x)}=0,
$$
where $R\Phi'_{\bar{\eta}_{0}}$ is the vanishing cycle functor for the morphism $p\circ \ff^{\xi}_{V}$. Since $\ff^{\xi}_{V}$ is proper, by proper base change theorem, we get 
\begin{equation}\label{xi vanishing 1}
(R\Phi_{\bar{\eta}_{0}}R(\ff^{\xi}_{V})_{*}\bq_{\ell})_{x}=0.
\end{equation}
Notice that the support theorem \ref{cj support} continues to hold for $R(\ff^{\xi}_{V})_{*}\bq_{\ell}$, because $V$ is a transversal slice to the Whitney strata $\kbb_{\beta}$. The equation (\ref{xi vanishing 1}) implies that all the direct summands $(\jj_{!*}\cg^{i})|_{V}$ of $R(\ff^{\xi}_{V})_{*}\bq_{\ell}$ has no vanishing cycle at $x$, i.e.
\begin{equation*}
\big(R\Phi_{\bar{\eta}_0}(\jj_{!*}\cg^{i})\big)_{x}=\big(R^{\dim(V)-1}\Phi_{\bar{\eta}_0}(\jj_{!*}\cg^{i})\big)_{x}=0.
\end{equation*}
This finishes the proof of the cases $(1)$ and $(2)$.
The case $(3)$ is already included as the case $(2)$ of proposition 3.8 of \cite{chen root valuation}, because the fiber $\kxx_{x}$ is irreducible and there is no stability issue for the compactified Jacobian.

\end{proof}

\begin{rem}
For $i=1, \cdots, 2p_{a}(X_{\gamma})-1$, let $\bar{\imath}=\min\{i, 2p_{a}(X_{\gamma})-i\}$, the theorem implies that the critical points on the $\delta$-strata for $\delta>\bar{\imath}$ makes no contribution to the Morse groups of $j_{!*}\cg^{i}$, so
$$
\ch^{*}(j_{!*}\cg^{i})_{0}=H^{*}(F^{\delta_{\gamma}+1-\bar{\imath}}, j_{!*}\cg^{i}). 
$$
It is enough to iterate the above process for $k=\delta_{\gamma},\cdots, \delta_{\gamma}+1-\bar{\imath}$.

\end{rem}

\subsection{The dependence of the cohomologies on the root valuation datum}

We have shown in \cite{chen root valuation} that the cohomologies of the affine Springer fiber $\xx_{\gamma}$ depends only on the root valuation datum of $\gamma$, namely the function 
$$
R:\Phi(G_{\overline{F}}, T_{\overline{F}})\to \bq,\quad R(\alpha)=\val(\alpha(\gamma)) \text{ for } \alpha\in \Phi(G_{\overline{F}}, T_{\overline{F}}).
$$ 
With the same technique, we can show:

\begin{thm}

Both the primitive part of $H^{*}(\xx_{\gamma}, \ql)$ and the cohomologies of the $\xi$-stable quotient $\xx_{\gamma}^{0,\, \xi}/\rS$ depends only on the root valuation datum of $\gamma$. 

\end{thm}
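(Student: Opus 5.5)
The plan is to follow the strategy of \cite{chen root valuation}, replacing Ng\^o's support theorem and theorem \ref{morse group} by the microlocal results just obtained: theorem \ref{morse primitive} for the primitive part and theorem \ref{xi morse group} for the $\xi$-stable quotient. Let $\gamma,\gamma'$ be two regular semisimple elements with the same root valuation datum. In \cite{chen root valuation} such elements are connected through an equisingular family of plane curve singularities $\spf(\co[\gamma_{t}])$ over an irreducible parameter space $T$, with constant $\delta$-invariant, Milnor number and branch structure. One can even arrange that $\gamma_{t}$ is $\mu$-constant and topologically equisingular. The same parameter space serves here. For the $\xi$-stable quotient, the diagonal block decomposition $\gamma_{t}=\diag(\gamma_{t,1},\cdots,\gamma_{t,r})$ is determined by the root valuation datum. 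So the spectral curves $X_{\gamma_{t}}$ and their miniversal deformations $\ppi_{t}:\kxx_{t}\to\kbb_{t}$ fit into a family over $T$.

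The first step is to show that the local Whitney-stratified germs $(\kbb_{t},\{\kbb_{t,\beta}\},0)$ are topologically locally trivial over $T$. This should include the subfamilies $\kbb_{t,I_{\bullet}}$ and, on the $\cb$ side, the strata $\cs_{t,I_{\bullet}}$. It should also include the local systems $\cg_{t}=\ce_{t}/\ce_{t}^{\perp}$, and on the primitive side the tower $\cf_{t}$, $\ct_{n,t}$. Since $\ce_{t}$ is the vanishing-cycle local system of the singularity, topological equisingularity gives this local triviality, as in \cite{chen root valuation}. The classes $c_{i}$ are defined topologically by (\ref{c express}), so the covering towers $f_{n,t}$ are also locally constant in $t$.

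The second step is to choose the generic flag $H_{0}\supset H_{1}\supset\cdots$ and the stratified Morse functions $\varphi_{k+1}$ uniformly in $t$. This should use the polar variety properties (propositions 4.11 and 4.14 of \cite{chen root valuation}), which are open conditions and so hold on a dense open of $T$. We then run the iterative computation. The starting groups $\cg^{i}_{\bar{\xi}}$ (resp.\ $\cf^{i}_{\bar{\xi}}$) are locally constant. By theorems \ref{xi morse group} and \ref{morse primitive}, the only nonzero Morse groups sit on strict $\delta$-strata not of the form $\kbb^{\circ}_{I_{\bullet}}$ (resp.\ $\cs^{\circ}_{I_{\bullet}}$). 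Their number, Morse indices and values $\boldsymbol{\Lambda}_{x}^{i-\delta}$ (resp.\ $\BigWedge^{i-\delta}\phi_{x}^{*}(\cf_{x}/\ct_{x,\ell})$) vary continuously with $t$. The connecting maps $\partial$ in the long exact sequences are then also locally constant, because they are induced by a topologically trivial family of pairs $(F^{k}_{<v+a},F^{k}_{<v-a})$. Hence $\ch^{*}(\jj_{!*}\cg^{i})_{0}$, and by (\ref{primitive as limit}) the primitive part, are locally constant on $T$. Connectedness of $T$ then gives the result, via (\ref{quotient as fiber}) for $\xx^{0,\xi}_{\gamma}/\rS$.

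I expect the main obstacle to be this uniformity. It is not enough that the Morse groups are constant. The whole iterated Morse data, including the positions of the critical points on the strata and the gluing maps, must be transported along $T$. The subtle point is the strata $\kbb_{I_{\bullet}}$, which are not normal (one passes to $\kbb^{\flat}_{I_{\bullet}}$), together with the finite abelian towers. I would first try to prove a Thom--Mather type local triviality for the pair consisting of the total family over $T\times\kbb$ and the stratification refined by the loci $\kbb_{I_{\bullet}}$. Then I would observe that theorems \ref{morse primitive} and \ref{xi morse group} show that these extra loci contribute nothing, so only the $\mu$-constant data from \cite{chen root valuation} actually enters the computation.
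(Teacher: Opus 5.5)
\paragraph{The gap in the $\xi$-stable part.} Your overall architecture for the $\xi$-stable quotient is the paper's: Morse groups supported only on good strict $\delta$-strata, local topological triviality along the equisingular locus, then connectedness. You also correctly identify uniformity as the crux. But the step carrying the weight is both mis-set-up and unjustified.

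First, the miniversal deformations $\ppi_{t}:\kxx_{t}\to\kbb_{t}$ cannot form a topologically locally trivial family over $T$. Here $\dim\kbb_{t}$ is the Tjurina number, and it jumps in equisingular families. For instance $x^{4}+y^{5}+tx^{2}y^{3}$ has $\mu=12$ for all $t$ but $\tau=11$ for $t\neq 0$, and all members come from elements $\gamma_t$ with the same root valuation datum. The paper avoids this by never leaving the single family $\ppi$. It takes the equisingular stratum $\Delta_{\mu}\ni 0$ of $\kbb$ and uses lemma 4.18 of \cite{chen root valuation} (same root valuation datum implies equisingular germs). It then shows that $i_{\mu}^{*}\ch^{*}(\jj_{!*}\cg^{i})$ is locally constant on $\Delta_{\mu}$, so all stalks compared are stalks of one complex on one base. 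Keep your $T$ only for connectedness, and prove local constancy near each of its points inside such a fixed family.

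Second, ``topological equisingularity gives the local triviality'' is not an argument. Equisingularity of the germs $\spf(\co[\gamma_t])$ says nothing by itself about the stratified bases of their deformations. The missing input is theorem 4.17 of \cite{chen root valuation}: the union of the strict $\delta$-strata is Whitney regular over $\Delta_{\mu}$, hence locally topologically trivial along $\Delta_{\mu}$ by Thom--Mather. By theorem \ref{xi morse group}, every other stratum has vanishing Morse group and so does not even enter the long exact sequences. This is therefore exactly the triviality you need. The stronger triviality of the whole stratification refined by the $\kbb_{I_{\bullet}}$ that you propose to prove is neither needed nor available.

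Which strict $\delta$-points lie on some $\kbb^{\circ}_{I_{\bullet}}$ is carried along automatically. The number of components of the nodal fibre is locally constant on each strict $\delta$-stratum, and the trivializing flow moves points within strata.

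Finally, the generic flag and Morse data must be chosen adapted to each point, which gives constancy on a neighbourhood of every point. Genericity on a dense open of $T$ alone never reaches special members such as $\gamma$ itself.

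\paragraph{The primitive part.} Here you take the route the paper mentions but deliberately bypasses: microlocal analysis along the tower $f_{n}$ via theorem \ref{morse primitive}. Besides the same triviality input on the $\cb$-side, this requires transporting the \'etale towers $\varpi_{n}:\cb_{n}\to\cb^{\circ}$ and the groups $\ct_{n}$ along the equisingular locus, and controlling $\varprojlim_{m}\varinjlim_{n}$ uniformly.

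The paper instead uses a shortcut. By proposition 5.8 of \cite{chen decomposition}, the primitive part is $H^{*}(\xx^{0}_{\gamma},\ql)^{\Lambda^{0}}$. By \cite{chen root valuation}, $H^{*}(\xx^{0}_{\gamma},\ql)$ already depends only on the root valuation datum. Since $\Lambda^{0}$ acts through translations by $\rT(F)$, the invariants do too. Your route stays inside the present microlocal framework but is much heavier; the paper's is a two-line reduction to earlier results.
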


\begin{proof}

For the primitive part of $H^{*}(\xx_{\gamma}, \ql)$, one can either use the microlocal analysis developped in \S\ref{microlocal primitive}, or take the following shortcut: By proposition 5.8 of \cite{chen decomposition}, the primitive part is exactly the $\Lambda^{0}$-invariant subspace of $H^{*}(\xx_{\gamma}^{0}, \ql)$. 
Since $H^{*}(\xx_{\gamma}^{0}, \ql)$ depends only on the root valuation, and the $\Lambda^{0}$-action on it comes from the translation action of $T(F)$ on $\xx_{\gamma}$, the invariant subspace $H^{*}(\xx_{\gamma}^{0}, \ql)^{\Lambda^{0}}$ also depends only on the root valuation datum.  

For the cohomologies of $\xx_{\gamma}^{0, \, \xi}/\rS$, by the equation (\ref{quotient as fiber}), it is enough to show that the fibers $\ch^{*}(\jj_{!*}\cg^{i})_{0}$ depends only on the root valuation datum of $\gamma$. 
Consider the miniversal deformation $\ppi:(\kxx, X_{\gamma})\to (\kbb, 0)$, let $i_{\mu}: \Delta_{\mu}\to \kbb$ be the inclusion of the equisingular strata containing $0$. 
By lemma 4.18 of \cite{chen root valuation}, for semisimple regular elements $\gamma, \gamma'\in \kg[\![\varep]\!]$, if $\gamma$ and $\gamma'$ have the same root valuation datum, then the germs of singularities $\spf(\co[\gamma])$ and $\spf(\co[\gamma'])$ are equisingular. Hence it is enough to show that the restriction of the sheaves $i_{\mu}^{*}\ch^{*}(\jj_{!*}\cg^{i})$ is locally constant over $\Delta_{\mu}$.
For any point $z\in \Delta_{\mu}$, we can perform microlocal analysis of the fiber $(\jj_{!*}\cg^{i})_{z}$ with the same process as explained in \S\ref{micro xi-stable}. 
The Morse groups for this inductive process are calculated in the same way with the same results. 
By theorem \ref{xi morse group}, the Morse groups are non-trivial only over the strict $\delta$-strata $\kbb_{\delta}^{\circ}$, and depend only on the invariant $\delta$. 
By theorem 4.17 of \cite{chen root valuation}, the union of the strict $\delta$-strata $\bigcup_{\delta=0}^{\delta_{\gamma}} \kbb_{\delta}^{\circ}$ is Whitney regular over $\Delta_{\mu}$, 
hence it is locally topologically trivial along $\Delta_{\mu}$ by Thom-Mather's theory of Whitney regularity (cf. \cite{chen root valuation}, \S4.2 for a review).  
Hence the process of patching up the Morse groups to calculate $\ch^{*}(\jj_{!*}\cg^{i})_{z}$ is everywhere the same on $\Delta_{\mu}$, and so the group $\ch^{*}(j_{!*}\cf^{i})_{z}$ must be locally constant on it. This finishes the proof of theorem for $\xx_{\gamma}^{0,\,\xi}/\rS$.

\end{proof}

\bigskip
\small
\noindent
\begin{tabular}{ll}
&Zongbin {\sc Chen} \\ 
\\
&School of mathematics, Shandong University\\
&250100, JiNan, Shandong, \\
&P. R. China \\
&email: {\tt zongbin.chen@email.sdu.edu.cn}

\end{tabular}

\end{document}